\newcommand{\F}{\mathcal{F}}
\newcommand{\G}{\mathcal{G}}
\newcommand{\pr}{\mathrm{pr}}
\newcommand{\ord}{\mathrm{ord}}
\newcommand{\rank}{\mathrm{rank}}
\newcommand{\Sw}{\mathrm{Sw}}
\newcommand{\dimtot}{\mathrm{dimtot}}
\newcommand{\bQl}{\overline{\mathbb Q}_\ell}
\newcommand{\bZl}{\overline{\mathbb Z}_\ell}
\newcommand{\bFl}{\overline{\mathbb F}_\ell}
\providecommand{\abs}[1]{\lvert#1\rvert}
\newtheorem{definition}[subsection]{Definition}
\newtheorem{theorem}[subsection]{Theorem}
\newtheorem{proposition}[subsection]{Proposition}
\newtheorem{lemma}[subsection]{Lemma}
\newtheorem{remark}[subsection]{Remark}
\newtheorem{corollary}[subsection]{Corollary}
\newtheorem{conjecture}[subsection]{Conjecture}
\numberwithin{equation}{subsection}
\begin{document}
\title[Characteristic class and $\varepsilon$-factor]{Characteristic class and the $\varepsilon$-factor of an \'etale sheaf}
\date{\today}
\author{Naoya Umezaki}

\address{Graduate School of Mathematical Sciences, The University of Tokyo, 3-8-1 Komaba, Meguro-ku, Tokyo, 153-8914, Japan}
\email{umezaki@ms.u-tokyo.ac.jp, umezakinaoya@gmail.com}

\author{Enlin Yang}
\address{Fakult\"at f\"ur Mathematik, Universit\"at Regensburg, 93040 Regensburg, Germany}
\email{enlin.yang@mathematik.uni-regensburg.de, yangenlin0727@126.com}

\author{Yigeng Zhao}
\address{Fakult\"at f\"ur Mathematik, Universit\"at Regensburg, 93040 Regensburg, Germany}
\email{yigeng.zhao@mathematik.uni-regensburg.de}

\begin{abstract}
We prove a twist formula for the $\varepsilon$-factor of a constructible sheaf on a projective smooth variety over a finite field in terms of characteristic class of the sheaf. 
This formula is a modified version of the formula conjectured by Kato and Saito in \cite[Ann. Math., 168 (2008):33-96, Conjecture 4.3.11]{Kato_Saito}.

We give two applications of the twist formula. Firstly, we prove that the characteristic classes of  constructible \'etale  sheaves on projective smooth varieties over a finite field are compatible with proper push-forward.
Secondly, we show that the two Swan classes in the literature are the same on proper smooth surfaces over a finite field.
\end{abstract}

\subjclass[2010]{Primary 14F20; Secondary 11G25, 11S40.}
\maketitle

\section{Introduction}
\subsection{}Let $k$ be a finite field of characteristic $p$ and let $f\colon X\rightarrow {\rm Spec}k$ be a smooth projective  scheme purely of dimension $d$.
Let $\Lambda$ be a finite field of characteristic $\ell\neq p$ or $\Lambda=\overline{\mathbb Q}_\ell$.
For  $\F\in D_c^b(X,\Lambda)$, let $D(\F)$ be the dual  $R\mathcal Hom(\F,\mathcal K_X)$ of $\F$, where $\mathcal K_X=Rf^!\Lambda$ is the dualizing complex. Let $\chi(X_{\bar{k}},\F)$ be the Euler characteristic of $\F$.
The $L$-function $L(X,\F, t)$ satisfies the following functional equation
\begin{equation}\label{eqYZ:fe}
L(X,\F, t)=\varepsilon(X,\F)\cdot t^{-\chi(X_{\bar{k}},\F)}\cdot L(X, D(\F),t^{-1}),
\end{equation}
where 
\begin{equation}\label{eq:introep00}
\varepsilon(X,\F)=\det(-\mathrm{Frob}_k;R\Gamma(X_{\bar{k}},\F))^{-1}
\end{equation}
is the epsilon factor (the constant term of the functional equation (\ref{eqYZ:fe})) and $\mathrm{Frob}_k$ is the geometric Frobenius (the inverse of the Frobenius substitution).

\subsection{}In (\ref{eqYZ:fe}), both $\chi(X_{\bar{k}},\F)$ and $\varepsilon(X,\F)$ are related to ramification theory. 
Let $cc_X\F\in CH_0(X)$ be the characteristic class of $\F$ defined in \cite[Definition 5.7]{Saito} using the characteristic cycle $CC\F$ of $\F$.
Then $\chi(X_{\bar{k}},\F)={\rm deg}(cc_X\mathcal F)$ by the index formula \cite[Theorem 7.13]{Saito}. 
However, the relation between  $\varepsilon(X,\F)$ and $cc_X\mathcal F$ is more subtle.

\subsection{}
Let $\bar{x}$ be a geometric point of $X$.
The reciprocity map $CH_0(X)\to\pi_1^{\rm ab}(X,\bar x)$ is defined by sending the class $[s]$ of a closed point $s\in X$ to the geometric Frobenius $\mathrm{Frob}_s$.
Let $\rho$ be a continuous representation of $\pi_1(X, \bar{x})$ over $\Lambda$ of finite dimension.
We also denote by $\det\rho\colon CH_0(X)\to\Lambda^\times$  the composition of $\det\rho$ and the reciprocity map $CH_0(X)\to\pi_1^{\rm ab}(X,\bar x)$.

\subsection{}
In this paper, we prove the following Theorem \ref{main}, which is a modified version of the formula conjectured by Kato and T. Saito in \cite[Conjecture 4.3.11]{Kato_Saito}.
We note that their formula is written in terms of the Swan class of $\F$. In Section \ref{sec:sc}, we prove that the Swan class can be written in terms of characteristic class on proper smooth surfaces over a finite field (cf.  Corollary \ref{cor:2surf}).
\begin{theorem}[see Theorem \ref{thm:2}]\label{main}
We have
\begin{equation}\label{eqYZ:ep}
\det\rho(-cc_X\F)=\frac{\varepsilon(X,\F\otimes\rho)}{\quad\varepsilon(X,\F)^{\dim\rho}~}\qquad{\rm in}~\Lambda^\times.
\end{equation}
\end{theorem}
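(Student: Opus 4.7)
The plan is to reduce the global twist identity (\ref{eqYZ:ep}) to the case of a smooth projective curve, where it should follow from Laumon's product formula for $\varepsilon$-factors together with the classical local twist formula.

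To begin, both sides of (\ref{eqYZ:ep}) are multiplicative in $\rho$ under direct sums: the left-hand side because $\det(\rho_1\oplus\rho_2)=\det\rho_1\cdot\det\rho_2$, and the right-hand side because $\varepsilon$ is multiplicative on direct sums of complexes while $\dim\rho$ is additive. Hence the identity descends to a group homomorphism $\Phi\colon R(G)\to\Lambda^\times$, where $R(G)$ is the Grothendieck ring of continuous $\Lambda$-representations of the finite quotient $G$ of $\pi_1(X,\bar x)$ through which $\rho$ factors. By Brauer's induction theorem, to show $\Phi\equiv 1$ it suffices to verify the identity for representations of the form $\rho=\pi_*\chi$ arising from a finite \'etale cover $\pi\colon Y\to X$ and a character $\chi$ of $\pi_1(Y,\bar y)$. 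The projection formula $\F\otimes\pi_*\chi\cong R\pi_*(\pi^*\F\otimes\chi)$ and the tautology $\varepsilon(X,R\pi_*(-))=\varepsilon(Y,-)$ rewrite the right-hand side on $Y$, and matching the left-hand side through the transfer map on $\pi_1^{\mathrm{ab}}$ (compatible with class field theory) should further reduce the problem to the one-dimensional case $\rho=\chi$ on a smooth projective variety.

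For a character $\chi$ I would then reduce to curves via a sufficiently ample Lefschetz pencil on $X$. Using the Beilinson--Saito transversality theorem, a generic pencil meets the singular support of $\F$ transversally, so that $CC(\F)$ restricts cleanly to the generic fibers. Combined with the Leray-type compatibility of $\varepsilon$-factors along the pencil (after blowing up the base locus), this lets one rewrite both sides of (\ref{eqYZ:ep}) in terms of data on a smooth projective curve. On a curve, Laumon's product formula factors $\varepsilon(X,\F\otimes\chi)/\varepsilon(X,\F)^{\dim\chi}$ into local $\varepsilon$-factor twists at closed points, and the classical local twist identity at each henselian trait (Deligne--Henniart) evaluates each local ratio as $\chi$ applied to a local ramification invariant. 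Summing these contributions via the reciprocity map identifies the global product with $\det\chi(-cc_X\F)$, using Saito's explicit description of $CC(\F)$ on curves in terms of generic rank and Swan conductors.

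The main obstacle I expect is the reduction to curves in arbitrary dimension. Even granting the Beilinson--Saito transversality theorem, tracking the wild part of the ramification of $\F$ along the pencil without introducing spurious $\ell$-adic sign or correction factors in the $\varepsilon$-factor comparison is the most delicate point; this is presumably why the formula differs from the original Kato--Saito prediction by a controlled modification. Working with Saito's characteristic class (rather than the Kato--Saito Swan class) is advantageous here, since the former has cleaner functoriality and restriction properties in all dimensions, and its compatibility with the local twist formula is precisely what drives the matching in the curve case.
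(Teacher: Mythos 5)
There is a genuine gap in your first reduction. After Brauer induction you must verify the identity for every generator $\rho=\pi_*\chi=\mathrm{Ind}(\chi)$, including $\chi=\mathbf 1$, i.e.\ for $\rho=\pi_*\Lambda$. Moving the numerator to $Y$ via the projection formula gives $\varepsilon(X,\F\otimes\pi_*\chi)=\varepsilon(Y,\pi^*\F\otimes\chi)$, but the denominator you must divide by is $\varepsilon(X,\F)^{[Y:X]\dim\chi}$, not $\varepsilon(Y,\pi^*\F)^{\dim\chi}$; the discrepancy $\varepsilon(Y,\pi^*\F)/\varepsilon(X,\F)^{[Y:X]}=\varepsilon(X,\F\otimes\pi_*\Lambda)/\varepsilon(X,\F)^{[Y:X]}$ is exactly the twist formula for the rank-$[Y:X]$ representation $\pi_*\Lambda$ (together with $\det(\mathrm{Ind}\chi)=\det(\pi_*\Lambda)\cdot(\chi\circ\mathrm{Ver})$ on the determinant side). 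Unless $\pi$ is abelian, $\pi_*\Lambda$ does not split into characters of $\pi_1(X)$, so this case is not covered by rank-one instances on $X$ or on $Y$: virtual representations of dimension zero are not generated by classes $\mathrm{Ind}(\chi-\mathbf 1)$, and your reduction is circular precisely at the ``induction constant''. The paper avoids Brauer induction altogether in the main theorem: it proves the formula directly for a smooth twist $\G$ of arbitrary rank, the only extra input being the identity $\phi_u(\F\otimes\G_0,f)=\phi_u(\F,f)\otimes\G_0|_u$ at an isolated characteristic point together with the local twist formula (Lemma \ref{twist}), which holds for smooth twists of any rank; so nothing is gained, and much is lost, by insisting on rank one. (Brauer induction enters only later, for the change of coefficients in Corollary \ref{corYZ:2} and in Section \ref{sec:sc}.)

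Your second step is also underspecified in a way that hides the real mechanism. The argument is not a ``reduction of both sides to a curve'': one fibers $X$ over ${\mathbb P}^1$ by a good pencil and applies Laumon's product formula on the base to $Rf_*(\F\otimes\G_0)$; the local terms at non-characteristic $v$ are $\varepsilon$-factors of the fibres $X_v$, which have dimension $d-1$ and are handled by induction on $\dim X$ (this induction is absent from your sketch), while at the finitely many isolated characteristic points the contribution is computed through vanishing cycles and the Milnor formula $-\dimtot\phi_u(\F,f)=(CC\F,df)_{T^*X,u}$. Matching with $cc_X\F$ requires the fibration identity of Proposition \ref{prop:cc2}, and the base locus forces a comparison of $\varepsilon(X_L,\pi_L^*\F)$ with $\varepsilon(X,\F)$ and $\varepsilon(X\cap A_L,\F|_{X\cap A_L})$ together with the corresponding blow-up formula for the Gysin pull-back (Lemmas \ref{lem:blowupsheaf} and \ref{blowup}); finally, since $k$ is finite, good pencils need only exist after base change, and one needs the coprime-degree extension argument of Remark \ref{pf:rthpower} and Lemma \ref{pencil}. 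These are the points your ``Leray-type compatibility'' and ``should follow'' gloss over, and they constitute most of the proof.
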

\subsection{}We call (\ref{eqYZ:ep}) the twist formula  for the $\varepsilon$-factor. This is a global analogue of a twist formula for local $\varepsilon$-factors proved by Deligne and Henniart \cite{DH}.
When $\F$ is the constant sheaf $\Lambda$, this is proved by S. Saito \cite{Saito:1984}. If $\mathcal F$ is a  smooth sheaf  on an open dense subscheme $U$ of $X$ such that the complement $D=X- U$ is a simple normal crossing divisor and the sheaf $\mathcal F$ is tamely ramified along $D$,  then Theorem \ref{main} is a consequence of \cite[Theorem 1]{Sai93ep}.
If $\dim X=1$, the formula (\ref{eqYZ:ep}) follows from the product formula of Deligne and Laumon, cf. \cite[7.11]{Deligne} and \cite[3.2.1.1]{Laumon}.
In \cite{vi1,vi2}, Vidal proved a similar result on a proper smooth surface over a finite field of characteristic $p>2$ under one of the following two assumptions:
\begin{enumerate}
\item the sheaf $\mathcal F$ is smooth of rank 1 on an open dense subscheme $U\subseteq X$ such that the corresponding character is of order $np$ with $(n,p)=1$ and $D=X- U$ is a simple normal crossing divisor;
\item the wild ramification of $\mathcal F$ is totally non-fierce, cf. \cite[Th\'eor\`eme 2.2]{vi2}.
\end{enumerate}
In our paper, by using the recent development of ramification theory made by Beilinson \cite{Beilinson} and T. Saito \cite{Saito}, we proved Kato-Saito's conjectural formula (\ref{eqYZ:ep}) without any assumption on the ramification of $\mathcal F$ or the dimension of $X$.

\subsection{}
In \cite{beilinson07},  Beilinson developed the theory of topological epsilon factors using $K$-theory spectrum. Let $R$ be a commutative ring. Let $\mathcal F$ be a perfect constructible complex
of sheaves of $R$-modules on a compact real analytic manifold $M$. In loc.cit., he gave a Dubson-Kashiwara-style description of $\det R\Gamma(M,\mathcal F)$, and he asked  whether his construction  admits a motivic ($\ell$-adic or de Rham) counterpart.
For de Rham cohomology, such a construction is given by  Patel in \cite{Pat12,Pat17b}.
Based on \cite{Pat12,Pat17b},  Abe and Patel proved a similar twist formula in \cite{Pat17a} for global de Rham epsilon factors in the classical setting of $\mathcal D_X$-modules on smooth projective varieties over a field of characteristic zero.  

\subsection{}
As a corollary of Theorem \ref{main}, we prove the following compatibility of characteristic classes with proper push-forward, which is a consequence of the conjectures of T.~Saito (\cite[7.2]{Saito} and \cite[Conjecture 1]{Saito17}).
\begin{corollary}[see Corollary \ref{corYZ:2}]\label{corYZ:1}
Let $f:X\to Y$ be a proper map between smooth projective connected schemes over { a finite field} $k$ and  $\F\in D_c^b(X,\Lambda)$.
Then we have an equality in $CH_0(Y)$:
\begin{equation}\label{eqYZ:corep}
f_*(cc_X\F)=cc_Y Rf_*\F.
\end{equation}
\end{corollary}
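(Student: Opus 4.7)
My plan is to deduce the push-forward compatibility (\ref{eqYZ:corep}) from Theorem \ref{main} by observing that every ingredient in the twist formula---epsilon factors, tensor products with pulled-back representations, and the reciprocity map on $0$-cycles---interacts well with proper push-forward. For any continuous representation $\rho$ of $\pi_1(Y, \bar y)$ I will apply the twist formula to $(Y, Rf_*\mathcal{F}, \rho)$ and to $(X, \mathcal{F}, f^*\rho)$ and then compare.

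Applying Theorem \ref{main} on $Y$ gives
\begin{equation*}
\det\rho(-cc_Y Rf_*\mathcal{F}) = \frac{\varepsilon(Y, Rf_*\mathcal{F}\otimes\rho)}{\varepsilon(Y, Rf_*\mathcal{F})^{\dim\rho}},
\end{equation*}
while applying it on $X$ to $\mathcal{F}$ together with $f^*\rho$ gives
\begin{equation*}
\det(f^*\rho)(-cc_X\mathcal{F}) = \frac{\varepsilon(X, \mathcal{F}\otimes f^*\rho)}{\varepsilon(X, \mathcal{F})^{\dim\rho}}.
\end{equation*}
The right-hand sides agree: the projection formula $Rf_*(\mathcal{F}\otimes f^*\rho) \cong Rf_*\mathcal{F}\otimes\rho$ combined with the tautology $R\Gamma(X_{\bar k}, -) = R\Gamma(Y_{\bar k}, Rf_*(-))$ shows that the corresponding epsilon factors on $X$ and $Y$ are equal. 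On the left-hand sides, functoriality of the reciprocity map with respect to $f$---the commutative diagram relating $f_*\colon CH_0(X)\to CH_0(Y)$ and the induced map $\pi_1^{\mathrm{ab}}(X)\to\pi_1^{\mathrm{ab}}(Y)$---yields $\det(f^*\rho)(cc_X\mathcal{F}) = \det\rho(f_*cc_X\mathcal{F})$. Putting these together, for every continuous $\rho$ I obtain $\det\rho(f_*cc_X\mathcal{F}) = \det\rho(cc_Y Rf_*\mathcal{F})$.

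To upgrade this identity of characters to an equality of $0$-cycles, I specialize to one-dimensional $\rho$. Then the image in $\pi_1^{\mathrm{ab}}(Y)$ of the difference $f_*cc_X\mathcal{F} - cc_Y Rf_*\mathcal{F}$ is annihilated by every continuous character of this profinite group, hence vanishes. The equality in $CH_0(Y)$ then follows from the injectivity of the reciprocity map $CH_0(Y)\to\pi_1^{\mathrm{ab}}(Y)$ for smooth projective varieties over a finite field, a consequence of Kato-Saito's higher unramified class field theory. The main obstacle I foresee is the separation step when $\Lambda=\bFl$: continuous characters $\pi_1^{\mathrm{ab}}(Y)\to\bFl^\times$ have order prime to $\ell$ and may fail to detect $\ell$-torsion. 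I expect this to be handled by lifting the representations $\rho$ to $\bZl$-coefficients, or by reducing the statement to the $\Lambda=\bQl$ case where continuous characters separate all points of $\pi_1^{\mathrm{ab}}(Y)$.
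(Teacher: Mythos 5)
Your treatment of the case $\Lambda=\bQl$ is exactly the paper's proof of Corollary \ref{corYZ:2}: apply Theorem \ref{thm:2} on $X$ to $(\F,f^\ast\chi)$ and on $Y$ to $(Rf_\ast\F,\chi)$, identify the two ratios of epsilon factors via the projection formula and $R\Gamma(X_{\bar k},-)=R\Gamma(Y_{\bar k},Rf_\ast(-))$, transport the left-hand sides through the Kato--Saito functoriality diagram relating $f_\ast$ on $CH_0$ and on $\pi_1^{\rm ab}$, and conclude from the injectivity of the reciprocity map, using that $\bQl^\times$ contains $\mathbb Q/\mathbb Z$ so that finite-order characters separate the points of the profinite group $\pi_1^{\rm ab}(Y)$. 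Up to the cosmetic difference that the paper works with the rank-zero virtual twist $\chi_0=\chi-[\Lambda]$ while you divide by $\varepsilon^{\dim\rho}$, this is the same argument.

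The one genuine gap is the finite-coefficient case, which you flag but do not resolve, and only one of your two suggested fixes is viable. Lifting the twisting representation $\rho$ to $\bZl$-coefficients does not work as stated: Theorem \ref{thm:2} requires the twisting sheaf to have the same coefficient field $\Lambda$ as $\F$, so with $\Lambda$ finite you are stuck with characters valued in the finite group $\Lambda^\times$, which detect only a quotient of $\pi_1^{\rm ab}(Y)$ of exponent dividing $\lvert\Lambda^\times\rvert$ and in particular miss the pro-$\ell$ part entirely. The correct move, and the one the paper makes, is to lift the sheaf $\F$ rather than $\rho$: by devissage and additivity of $CC$ one reduces to $\F=j_!\G$ with $\G$ smooth, trivialized by a finite Galois covering with group $G$; the surjectivity of the decomposition homomorphism $d\colon K(G,E)\to K(G,\Lambda)$ (\cite[Chapter 16, Theorem 33]{Serre97}, recalled in Subsection \ref{sub:serredecomporecall}) produces a virtual $\bQl$-sheaf $\G'$ lifting $\G$; and then $CC(j_!\G')=CC(\F)$ and $CC(Rf_\ast j_!\G')=CC(Rf_\ast\F)$ by Definition \ref{def:ccforbql} together with the compatibility of the decomposition homomorphism with $Rf_\ast$ (Theorem \ref{thm:cdeforetalesheaf}). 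With these ingredients the finite case reduces to the $\bQl$ case you have already handled; without them, your argument proves the corollary only for $\Lambda=\bQl$.
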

In particular, if $Y={\rm Spec}(k)$, then we have $\chi(X_{\bar k},\mathcal F)={\rm deg}(cc_X\mathcal F)$ by (\ref{eqYZ:corep}). 
This gives another proof of the index formula \cite[Theorem 7.13]{Saito} when $k$ is a  finite field. 
If $X$ is a projective smooth curve and $Y={\rm Spec}(k)$, the formula (\ref{eqYZ:corep}) is nothing other than
 the well-known Grothendieck-Ogg-Shafarevich formula for curves \cite{SGA5}.

\subsection{}
For any smooth scheme $X$ over a perfect field $k$, 
let $K(X,\Lambda)$ be the Grothendieck group of the triangulated category $D_c^b(X,\Lambda)$. We have the following linear morphism (cf. \cite[Definition 6.7]{Saito17})
\[cc_{X,\bullet}\colon K(X,\Lambda)\rightarrow CH_\bullet(X)=\bigoplus_{i=0}^{\dim X}CH_i(X)\]
which sends the class of $\mathcal F$ to the (total) characteristic class of $\mathcal F$.  By  \cite[Lemma 6.9]{Saito}, $cc_X(\mathcal F)$ is the dimension 0-part of the total characteristic class $cc_{X,\bullet}(\mathcal F)$, i.e., $cc_X(\mathcal F)=cc_{X,0}(\mathcal F)$. Following Grothendieck \cite{SGA5}, it's natural to ask the following question:
is the diagram
\begin{align}\label{eq:fccf}
\begin{gathered}
\xymatrix{
K(X,\Lambda)\ar[d]_{f_\ast}\ar[r]^{cc_{X,\bullet}}&CH_\bullet(X)\ar[d]^{f_*}\\
K(Y,\Lambda)\ar[r]^{cc_{Y,\bullet}}&CH_\bullet(Y)
}
\end{gathered}
\end{align}
commutative for any proper map $f\colon X\rightarrow Y$ between smooth schemes over $k$? 
By the philosophy of Grothendieck \cite[Note $87_1$]{Grothendieck} and \cite[Example 6.10]{Saito}, the answer is no in general if ${\rm char}(k)>0$.  
However, Corollary \ref{corYZ:1} says that the degree 0-part of the diagram (\ref{eq:fccf}) is commutative, i.e., 
if $f\colon X\rightarrow Y$ is a proper map between smooth projective schemes over a finite field $k$, then we have  the following commutative diagram
\begin{align}\label{eq:fccf2}
\begin{gathered}
\xymatrix{
K(X,\Lambda)\ar[d]_{f_\ast}\ar[r]^{cc_{X}}&CH_0(X)\ar[d]^{f_*}\\
K(Y,\Lambda)\ar[r]^{cc_{Y}}&CH_0(Y).
}
\end{gathered}
\end{align}
If $k=\mathbb C$, the diagram (\ref{eq:fccf}) is commutative by \cite[Theorem A.6]{Ginsburg}.

\subsection{}
Just after writing this paper, T.~Saito  \cite{Saito17b}
announced a  proof for proper push-forward of  characteristic cycles along projective 
morphisms $f\colon X\rightarrow Y$ between smooth projective schemes over a perfect field, 
under the assumption that the direct image of the singular support of $\mathcal F$ has  dimension at most $\dim Y$.
Under the same assumption, T.~Saito's result implies (\ref{eqYZ:corep}). 
In Corollary \ref{corYZ:1}, we don't need any assumption on the dimension of the direct image of the singular support of $\mathcal F$, 
but we have to assume that $k$ is a finite field.

\subsection{}
As another application of Theorem \ref{main}, we show that the two Swan classes in the literature are the same on proper smooth surfaces over a finite field.
Let $X$ be a proper smooth and connected scheme  over a perfect field $k$. Let $j\colon U\rightarrow X$ be an open dense sub-scheme of $X$ and $\mathcal F$ a smooth sheaf of $\Lambda$-modules on $U$. 
In \cite{Kato_Saito}, Kato and T. Saito defined the Swan class $\Sw_X^{\rm ks}(\mathcal F)\in {\rm CH}_0(X-U)\otimes_{\mathbb Z}\mathbb Q$ by using logarithmic product and alteration. Later  in \cite[Definition 6.7.3]{Saito}, T.~Saito defined another Swan class $\Sw_X^{\rm cc}(\mathcal F)$ by using characteristic cycle, cf. (\ref{eq:sc:cc}).
In  \cite[Conjecture 6.8.2]{Saito}, T.~Saito conjectures that the two Swan classes are the same in ${\rm CH}_0(X-U)$. In this paper, we prove the following weak version of his conjecture for smooth proper surfaces over finite fields:
\begin{corollary}[see Corollary \ref{cor:2surf}]\label{cor:intr2}
Let $X$ be a proper smooth and connected surface  over a finite field $k$. 
Let $U$ be an open dense sub-scheme of $X$ and $\mathcal F$ a smooth sheaf of $\Lambda$-modules on $U$. 
Then
we have an equality in $CH_0(X)$:
\begin{equation}\label{eq:sc:2surf}
 \Sw_X^{\rm ks}(\mathcal F)=- \Sw_X^{\rm cc}(\mathcal F).
\end{equation}
\end{corollary}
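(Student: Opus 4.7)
The plan is to compare two expressions for the same $\varepsilon$-factor quotient in $\Lambda^\times$: one coming from Theorem \ref{main} (which naturally involves $\Sw_X^{\rm cc}$) and the other from the Kato-Saito product formula of \cite{Kato_Saito} (which naturally involves $\Sw_X^{\rm ks}$). Both Swan classes are additive in $\mathcal F$, vanish on constant coefficients, and are supported on $X-U$, so the obstruction class $\Delta(\mathcal F):=\Sw_X^{\rm cc}(\mathcal F)+\Sw_X^{\rm ks}(\mathcal F)\in CH_0(X)_{\mathbb Q}$ is what has to be killed.

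More precisely, set $r=\rank\mathcal F$. By T.~Saito's definition (\ref{eq:sc:cc}) of the Swan class via the characteristic cycle, there is an identity
\[cc_X(j_!\mathcal F)-r\cdot cc_X(j_!\Lambda)=-\Sw_X^{\rm cc}(\mathcal F)\]
in $CH_0(X)$. Applying Theorem \ref{main} to both $j_!\mathcal F$ and $j_!\Lambda$ and dividing, we obtain, for every continuous finite-dimensional representation $\rho$ of $\pi_1(X,\bar x)$,
\[\det\rho\bigl(\Sw_X^{\rm cc}(\mathcal F)\bigr)=\frac{\varepsilon(X,j_!\mathcal F\otimes\rho)\,\varepsilon(X,j_!\Lambda)^{r\dim\rho}}{\varepsilon(X,j_!\mathcal F)^{\dim\rho}\,\varepsilon(X,j_!\Lambda\otimes\rho)^{r}}.\]
By construction, the Kato-Saito Swan class is designed so that exactly this quotient of $\varepsilon$-factors equals $\det\rho(-\Sw_X^{\rm ks}(\mathcal F))$; on a smooth proper surface over $k$ this identity is available by combining \cite{Kato_Saito} with Deligne-Laumon on curves and Vidal's surface results \cite{vi1,vi2}. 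Subtracting yields $\det\rho(\Delta(\mathcal F))=1$ for every such $\rho$.

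To conclude that $\Delta(\mathcal F)=0$, one invokes class field theory on the smooth proper surface $X/k$: the reciprocity map $CH_0(X)\to\pi_1^{\rm ab}(X)$ is injective modulo torsion, so the family of continuous $\Lambda^\times$-valued characters of $\pi_1^{\rm ab}(X)$ separates $CH_0(X)_{\mathbb Q}$. The vanishing of $\det\rho(\Delta(\mathcal F))$ for all $\rho$ therefore forces $\Delta(\mathcal F)=0$ in $CH_0(X)_{\mathbb Q}$, which is (\ref{eq:sc:2surf}).

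\textbf{Main obstacle.} The delicate step is establishing the Kato-Saito identity for $\Sw_X^{\rm ks}$ on a general smooth proper surface over $k$, since \cite{Kato_Saito} prove their formula only conjecturally and Vidal covers only restricted ramification. One must bootstrap from Vidal's rank-one cases \cite{vi1,vi2} using additivity of both Swan classes in $\mathcal F$, their behaviour under finite \'etale pullback, and the push-forward compatibility of Corollary \ref{corYZ:1}, to reduce an arbitrary $\mathcal F$ to Brauer-type combinations of characters to which Vidal applies. The surface hypothesis enters precisely here, and again in the final class field theory step.
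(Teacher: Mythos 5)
The key problem with your proposal is that the identity you invoke — that the Kato--Saito Swan class $\Sw_X^{\rm ks}(\mathcal F)$ already satisfies the $\varepsilon$-factor twist formula on smooth proper surfaces — is not in fact available. That identity is Kato--Saito's Conjecture~4.3.11, i.e.\ precisely what this circle of ideas is trying to establish. You acknowledge this as the ``main obstacle'' and sketch a bootstrap from Vidal, but that bootstrap breaks down: Vidal's surface results \cite{vi1,vi2} require $p>2$, and her rank-one case covers only characters of order $np$ with $(n,p)=1$. After Brauer induction one is left with characters whose order is an arbitrary $p$-power $p^n$, and for $n\geq 2$ these are outside Vidal's hypotheses (and additivity/pushforward alone will not rescue you, because the finite coverings that Brauer produces can have genuinely worse ramification than the original sheaf). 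So the reduction to ``Brauer-type combinations to which Vidal applies'' fails.

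The paper avoids this entirely and works directly at the level of Chow classes, never reproving a twist formula for $\Sw_X^{\rm ks}$. After reducing (via Lemma~\ref{lem:sc:pfcc}, Corollary~\ref{corYZ:2}, and resolution of singularities) to the rank-one case with cyclic Galois group $G$ of order $p^n$, the essential new ingredient is the Saito--Yatagawa theorem \cite[Theorem~0.1]{Saito_Yatagawa}: two characters of the same $p$-power order have the same wild ramification and hence the same characteristic cycle. Combined with the analogous invariance of $\Sw_X^{\rm ks}$ visible from \eqref{eq:sc:pft:2}, this reduces everything to the single sheaf $f_*\Lambda$, where both Swan classes are computed explicitly via logarithmic Chern classes and shown to agree up to sign, \eqref{eq:sc:pft:6}. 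Theorem~\ref{main} enters only indirectly, through the pushforward compatibility of Corollary~\ref{corYZ:1}; the $\varepsilon$-factor comparison and the class-field-theory separation step you propose are replaced by this purely geometric argument. Your proposal, as written, is not a proof; to repair it you would need an independent proof of the twist formula for $\Sw_X^{\rm ks}$ on surfaces, and the natural route to that is exactly the Saito--Yatagawa comparison — at which point you have recovered the paper's argument.
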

The proof of Corollary \ref{cor:intr2} also works for higher dimensional smooth schemes over any perfect field if we assume resolution of singularities and a special case of proper push-forward of characteristic class. For the details, see Theorem \ref{thm:sc:surf}.

\subsection{}
Here is a rough idea of the proof of Theorem \ref{main}.
We follow a similar strategy as S. Saito's proof in \cite{Saito:1984}.
By taking a good pencil, we prove Theorem \ref{main} by induction on the dimension of $X$.
By using the product formula of Deligne and Laumon and Proposition  \ref{prop:cc2}, it is reduced to computing local contributions on both sides. In order to calculate local contributions, we use the Milnor formula (\ref{eqYZ:milnor}) for characteristic cycle.

For the proof of Corollary \ref{cor:intr2}, by using Brauer induction theorem and Corollary \ref{corYZ:1},
we first reduce the proof to the case where $\mathcal F$ is of rank 1 and $\mathcal F$ is trivialized by a cyclic covering of Galois group $G$.
The essential case is  when $G$ is a cyclic group of order $p$. In that case, for two characters $\chi$ and $\chi^\prime$ of $G$, if they have the same order, then they have the same characteristic cycle by a result of T. Saito and Yatagawa, cf. \cite[Theorem 0.1]{Saito_Yatagawa}. Using their theorem, we can reduce the proof to a very special case where $\mathcal F=f_\ast\Lambda$ and $f\colon V\rightarrow U$ is an \'etale covering. At last, a direct calculation proves Corollary \ref{cor:intr2}.

\subsection{}
The content of each section is as follows. In Section \ref{sec:lc}, we review the theory of local constants and the product formula for epsilon factors. 
In Section \ref{sec:poc}, we prove a blow up formula and a fibration formula for the Gysin pull-back of cycles  by the zero section  of the cotangent bundle. The proof of Proposition \ref{prop:cc2} is based on that section. 
In Section \ref{sec:eogp}, we start with a review of Saito and Yatagawa's result on the existence of pre-good pencils. Then we prove the existence of good pencils in Lemma \ref{pencil}.
In  Section \ref{sec:cc}, we first recall the definitions and properties of singular support and characteristic cycle. Then we prove Theorem \ref{main} and Corollary \ref{corYZ:1}.
In  Section \ref{sec:sc}, we prove Corollary \ref{cor:intr2}.
\subsection*{Acknowledgements}
The authors would like to express their gratitude to Takeshi Saito for inspiring discussion and carefully reading our manuscript.
The authors also thank the referee for a number of helpful suggestions and corrections.
The second author thanks Ahmed Abbes for pointing out Vidal's work \cite{vi1, vi2} to him during a visit at IHES.
The second author is partially supported by Alexander von Humboldt Foundation for his research at Universit\"at Regensburg and Freie Universit\"at Berlin. 
Both the second and the third authors are partially supported by the DFG through CRC 1085 \emph{Higher Invariants} (Universit\"at Regensburg).
The authors are grateful to these institutions.

\subsection*{Notation and Conventions}

\begin{enumerate}
\item Let $p$ be a prime number and $\Lambda$ be a finite field of characteristic $\ell\neq p$ or $\Lambda=\overline{\mathbb Q}_\ell$.
\item We say that a complex $\mathcal F$ of \'etale sheaves of $\Lambda$-modules on a scheme $X$ is {\it constructible} (respectively {\it smooth}) if the cohomology sheaf $\mathcal H^q(\mathcal F)$ is constructible  for every $q$ and if $\mathcal H^q(\mathcal F)=0$ except finitely many $q$ (respectively moreover $\mathcal H^q(\mathcal F)$ is  locally constant for all $q$).
\item For a scheme $S$ over $\mathbb Z[1/{\ell}]$,
let $D^b_c(S,\Lambda)$ be the triangulated category of bounded complexes of $\Lambda$-modules with constructible cohomology groups on $S$
and let $K(S,\Lambda)$ be the Grothendieck group of $D^b_c(S,\Lambda)$. We call elements of $K(S,\Lambda)$ virtual $\Lambda$-sheaves on $S$.
\item For a pro-finite group $G$, we denote by $K(G,\Lambda)$  the Grothendieck group of the category of continuous representations of $G$ over $\Lambda$. Elements of $K(G,\Lambda)$ will be called virtual $\Lambda$-representations of $G$.
\item For a scheme $X$, we denote by  $\abs{X}$  the set of closed points of  $X$.
\item For a finite extension $E$ of $\mathbb Q_\ell$, we denote by $\mathcal O_E$ the ring of integers of $E$, by $\mathfrak m_E$ the maximal ideal of $\mathcal O_E$ and $\pi_E$ a uniformizer of $\mathcal O_E$. The residue field of $\mathcal O_E$ will be denoted by $k_E$. 
\item For any smooth scheme $X$  over a field $k$, we denote by $T^\ast_XX\subseteq T^\ast X$ the zero section of the cotangent bundle $T^\ast X$ of $X$.
\end{enumerate}

\section{Local  $\varepsilon$-factor and product formula}\label{sec:lc}
\subsection{}In this section, we review the theory of local  constants and the product formula for epsilon factors. In the case where $\Lambda=\bQl$, we use \cite[Th\'eor\` eme 3.1.5.4, Th\'eor\`eme 3.2.1.1]{Laumon}. In the case where $\Lambda$ is a finite field, we use \cite[Th\'eor\`eme 6.5, Th\'eor\`eme 7.11]{Deligne}.
\subsection{}\label{sub:defineTriple}
We first recall the results for local epsilon factors with $\bQl$-coefficients.  
Let $k$ be a finite field of characteristic $p$. A {\it triple $(T,\mathcal F,\omega)$ over $k$} means the following data:
\begin{itemize}
\item $T$ is a henselian trait of equicharacteristic $p$ with generic point $\eta$ and closed point $s$, such that the residue field $k(s)$ of $s$ is a finite extension of $k$.
\item $\mathcal F\in D_c^b(X,\bQl)$.
\item A non-zero meromorphic 1-form $\omega\in \Omega_{k(\eta)}^1\setminus\{0\}$ on $T$. 
\end{itemize}
For such a triple $(T,\mathcal F,\omega)$, we denote by $K_s$ the completion of the fraction field of $T$ with respect to the valuation ${\rm ord}_s\colon k(\eta)^\times \to \mathbb Z$. The valuation ${\rm ord}_s$ defines a map ${\rm ord}_s\colon \Omega_{K_s}^1\setminus\{0\}\to\mathbb Z$ which is characterized by the formula $\ord_s(a\cdot db)=\ord_s(a)$ for any $a\in K_s^\times$ and  any uniformaizer $b$ of $K_s$. The reciprocity map
$\rho_s\colon K_s^\times \to {\rm Gal}(\overline K_s/K_s)^{\rm ab}$ is normalized by sending  uniformizers of $K$ to liftings of the geometric Frobenius of $k(s)$. By local abelian class field theory (cf. \cite{Tate2}),
$\rho_s$ is injective with dense image.
\begin{theorem}[{\cite[Th\'eor\` eme 3.1.5.4]{Laumon}}]\label{thm:laumonepsilon}
Let $k$ be a finite field of characteristic $p$ and $\ell\neq p$ a prime number.
Let $\psi \colon \mathbb F_p\to {\overline{\mathbb Q}}_\ell^\times$ be a fixed non-trivial additive character of $\mathbb F_p$. There exists a unique map $\varepsilon\coloneqq\varepsilon_{\psi}$ sending any triple $(T, \F, \omega)$ over $k$ 
 to $\varepsilon(T,\F,\omega)\in \overline{\mathbb{Q}}_{\ell}^{\times}$, which satisfies the following conditions:
 \begin{itemize}
 \item[(i)] the number $\varepsilon(T,\F,\omega)$ depends only on the isomorphism class of the triple $(T,\F,\omega)$.
 \item[(ii)] For any distinguished triangle in $D^b_c(T,\overline{\mathbb Q}_{\ell})$:
 \[ \F'\to \F\to \F'' \to \F'[1] \]
 we have   
 \[ \varepsilon(T,\F,\omega)=\varepsilon(T,\F',\omega)\cdot\varepsilon(T,\F'',\omega) \]
 \item[(iii)] If $\F$ is supported on the closed point $s$ of $T$ (i.e., $\F_{\bar{\eta}}=0$), then 
 \[ \varepsilon(T,\F,\omega)=\det(-\mathrm{Frob}_s;\F_{\bar s})^{-1}, \]
 where $\mathrm{Frob}_s\in{\rm Gal}(\overline{k(s)}/k(s))$ is the geometric Frobenius at $s$. In the case where $\mathcal F_{\bar s}=0$, we put $\det(-\mathrm{Frob}_s;\F_{\bar s})^{-1}=1$.
 \item[(iv)] Let $\eta_1$ be a finite separable extension of $\eta$,  $f\colon T_1\to T$  the normalization of $T$ in $\eta_1$, and $\F_1\in D^b_c(T_1,\overline{\mathbb Q}_{\ell})$. If $\mathrm{rank}((\F_{1})_{\bar{\eta}})=0$, then \[ \varepsilon(T,f_*\F_1,\omega)=\varepsilon(T_1,\F_1,f^*\omega).\]
 \item[(v)] Let $\G$ be a smooth sheaf of rank $1$ on $\eta$, which induces a character $\chi\colon K^{\times}_{s}\to \overline{\mathbb Q}_{\ell}^{\times}$ via the reciprocity homomorphism $\rho_s\colon K^{\times}_{s} \to \mathrm{Gal}(\overline{K}_s/K_s)^{\mathrm{ab}}$. 
 Let $j\colon \eta\to T$ be the immersion.
 Then we have 
 \[ \varepsilon(T,j_*\G,\omega)=\varepsilon(\chi,\Psi_\omega), \]
 where $\Psi_\omega\colon K_s\to \bQl^\times$ is the character defined by $\Psi_\omega(a)=\psi\circ {\rm Tr}_{k(s)/\mathbb F_p}({\rm Res}(a\cdot \omega))$
 and $\varepsilon(\chi,\Psi_\omega)$ is the Tate local constant \cite{Tate} (cf. \cite[3.1.3.2]{Laumon}).
 \end{itemize}
\end{theorem}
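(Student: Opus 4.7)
The theorem has two aspects, uniqueness and existence, which I would tackle separately. The uniqueness part is accessible by d\'evissage, while the existence part requires substantially more work.

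For uniqueness, axiom (ii) reduces the problem from complexes in $D^b_c(T,\bQl)$ to individual constructible sheaves $\F$. For such an $\F$, the standard localization triangle
\[
j_!j^*\F \to \F \to i_*i^*\F
\]
together with axiom (iii), which pins down the punctual part $i_*i^*\F$, leaves only the generic contribution. Using the triangle relating $j_!\G$ and $j_*\G$ (they differ by a sheaf supported on the closed point), the remaining case is $\F = j_*\G$ for a smooth sheaf $\G$ on $\eta$. When $\G$ has rank one this value is fixed by axiom (v). For higher rank, I would pass to the corresponding continuous $\bQl$-representation $\rho$ of $\mathrm{Gal}(\bar\eta/\eta)$ and invoke a virtual form of Brauer's induction theorem, expressing the dimension-zero virtual representation $\rho - (\dim\rho)\cdot \mathbf{1}$ as a $\mathbb Z$-linear combination of characters of the form $\mathrm{Ind}^G_{H_i}(\chi_i - \mathbf{1}_{H_i})$ with $\chi_i$ rank one. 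Each such summand corresponds, via a finite covering $f_i\colon T_i \to T$, to a pushforward of a virtual smooth sheaf of generic rank zero, so axiom (iv) applies. Combined with axiom (v) on each $T_i$ for the rank one pieces, this pins down $\varepsilon(T,j_*\G,\omega)$ uniquely in terms of Tate local constants $\varepsilon(\chi_i,\Psi_{f_i^*\omega})$ plus a normalization depending on the generic rank and $\det\rho$.

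For existence, I would reverse the argument: define $\varepsilon$ on smooth rank one sheaves on $\eta$ by the Tate constant formula of axiom (v), extend to virtual smooth sheaves of generic rank zero using the induction formula forced by axiom (iv), and finally extend to the whole of $D^b_c(T,\bQl)$ by the localization triangle together with axiom (iii). The principal obstacle here is well-definedness: different Brauer decompositions of the same representation must yield the same number. This is precisely the inductivity-in-dimension-zero property for local constants, a deep theorem of Deligne and Langlands, which is proved by a global argument using the functional equation of Hecke $L$-functions together with a careful analysis of the $\lambda$-factors that govern how local constants behave under induction. Once inductivity is in hand, verifying axioms (i)--(v) for the resulting function is a direct exercise.

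The hard part of the whole proof is thus concentrated in the Deligne--Langlands inductivity theorem, which is genuinely global in nature; the remainder of the argument is a careful bookkeeping of Brauer induction combined with the localization triangle. An alternative route---which I would also seriously consider and which is indeed the one Laumon takes in the reference cited---is to bypass Brauer induction altogether and construct $\varepsilon(T,\F,\omega)$ directly in a geometric fashion via local Fourier transforms, then verify (i)--(v) a posteriori; this trades the Deligne--Langlands input for a substantial amount of $\ell$-adic Fourier machinery, but has the advantage of producing a uniform formula valid for all $\F$.
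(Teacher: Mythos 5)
The present paper does not prove this statement: it is quoted verbatim as Theorem~\ref{thm:laumonepsilon} with the citation [Laumon, Th\'eor\`eme 3.1.5.4] and is used as a black box. So there is no proof of the paper to compare your sketch against; you have reconstructed an argument for a result the authors simply import.

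That said, your outline follows the classical Deligne--Langlands route and is reasonable in shape, but it has a real gap in the uniqueness step. Brauer's theorem (including its dimension-zero variant, writing $\rho-(\dim\rho)\cdot\mathbf{1}$ as a $\mathbb Z$-combination of $\mathrm{Ind}^{G}_{H_i}(\chi_i-\mathbf 1)$ from elementary subgroups) applies to representations of a \emph{finite} group, whereas a continuous $\bQl$-representation of $\mathrm{Gal}(\bar\eta/\eta)$ need not factor through a finite quotient. One must first invoke Grothendieck's $\ell$-adic local monodromy theorem and the structure theory of Weil--Deligne representations to reduce (after a d\'evissage through the monodromy filtration and an unramified twist) to the finite-image case; one must then show that the axioms (i)--(v), which do not include an unramified-twist rule, nevertheless \emph{determine} the behavior of $\varepsilon$ under unramified twist. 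Neither reduction is addressed in your sketch, and both are nontrivial. Finally, your last paragraph slightly misattributes Laumon's method: his Th\'eor\`eme 3.1.5.4 itself rests on the Deligne--Langlands existence theorem; the $\ell$-adic local Fourier transform and the stationary-phase principle enter primarily to prove the \emph{global} product formula (Theorem~\ref{product} in this paper), though they do give an a posteriori description of the local constants (Laumon's Th\'eor\`eme 3.4.2). They are not used in place of Brauer induction to establish 3.1.5.4.
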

\subsection{}The formula \eqref{eqYZ:ep} can be viewed as a globalization of the following result \eqref{eq:localtwist}.
\begin{lemma}[Local twist formula {\cite[3.1.5.6]{Laumon}}]
Let $(T,\mathcal F,\omega)$ be a triple (cf. \ref{sub:defineTriple}) over  $k$.
Let $\G$ be a smooth $\overline{\mathbb Q}_{\ell}$-sheaf on $T$. Then we have 
\begin{equation}\label{eq:localtwist}
\varepsilon(T,\F\otimes\G, \omega)=\varepsilon(T,\F,\omega)^{\mathrm{rank}\G}\cdot \det(\mathrm{Frob}_s;\G_{\bar s})^{a(T,\F,\omega)},
\end{equation}
where  $a(T,\F,\omega)$ is the local Artin conductor given by the formula
\begin{align}\label{eq:localartintwisted}
 a(T,\F,\omega)\coloneqq \mathrm{rank} \F_{\bar{\eta}}-\mathrm{rank} \F_{\bar{s}}+\mathrm{Sw}\F_{\bar{\eta}}+\mathrm{rank}\F_{\bar{\eta}}\cdot \mathrm{ord}_s(\omega),
\end{align}
and $\mathrm{Sw}\F_{\bar{\eta}}$ is the Swan conductor of $\F_{\bar{\eta}}$ $($cf. \cite[19.3]{Serre97}$)$.
\end{lemma}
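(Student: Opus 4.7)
The plan is to use the axiomatic characterization of $\varepsilon$ given in Theorem \ref{thm:laumonepsilon} and reduce by dévissage on $\F$ to elementary cases. Write $j\colon\eta\hookrightarrow T$ and $i\colon s\hookrightarrow T$ for the canonical immersions, and use the distinguished triangle $j_!j^*\F\to\F\to i_*i^*\F\to$. Since both sides of \eqref{eq:localtwist} are multiplicative in $\F$ with respect to distinguished triangles (using axiom (ii) on the $\varepsilon$-factors, and additivity of $\mathrm{rank}$, $\mathrm{Sw}$ and $\mathrm{rank}\F_{\bar s}$ in the defining formula of $a(T,\F,\omega)$), it suffices to establish \eqref{eq:localtwist} in the two cases: (a) $\F$ supported on $s$, and (b) $\F=j_!\F_\eta$ with $\F_\eta$ a smooth sheaf on $\eta$.

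The punctual case (a) is a direct verification. Here $\F_{\bar\eta}=0$, hence $a(T,\F,\omega)=-\mathrm{rank}\F_{\bar s}$ by \eqref{eq:localartintwisted}. Using axiom (iii) and the identity $\det(A\otimes B; V\otimes W)=\det(A;V)^{\dim W}\det(B;W)^{\dim V}$ applied to $\F_{\bar s}\otimes\G_{\bar s}$ with $A=B=-\mathrm{Frob}_s$, one obtains
\[
\varepsilon(T,\F\otimes\G,\omega)=\det(-\mathrm{Frob}_s;\F_{\bar s})^{-\mathrm{rank}\G}\cdot\det(\mathrm{Frob}_s;\G_{\bar s})^{-\mathrm{rank}\F_{\bar s}},
\]
which coincides with the right-hand side of \eqref{eq:localtwist}. (The sign-twist produced by the $-\mathrm{Frob}_s$ in the determinants matches on both sides.)

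For the smooth case (b) I would first treat the rank one subcase by applying axiom (v): if $\F_\eta$ corresponds to a character $\chi$ and $\G|_\eta$ corresponds to a character $\chi'$, then $\F_\eta\otimes\G|_\eta$ corresponds to $\chi\chi'$, and \eqref{eq:localtwist} reduces to the classical local twist formula for Tate's abelian local constants,
\[
\varepsilon(\chi\chi',\Psi_\omega)=\varepsilon(\chi,\Psi_\omega)\cdot\chi'\bigl(\rho_s^{-1}(\mathrm{Frob}_s)^{a(\chi,\Psi_\omega)}\bigr),
\]
proved by Deligne and Henniart, once one checks that $a(T,j_!\F_\eta,\omega)=a(\chi,\Psi_\omega)$ via \eqref{eq:localartintwisted}. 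To pass from rank one to arbitrary rank, apply Brauer's induction theorem to the (continuous) representation of $\mathrm{Gal}(\bar\eta/\eta)$ attached to $\F_\eta$: one expresses $[\F_\eta]$ as a $\mathbb Z$-linear combination of sheaves of the form $f_*\mathcal L$, where $f\colon T_1\to T$ is the normalization of $T$ in a finite separable extension $\eta_1/\eta$ and $\mathcal L$ is of rank one on $\eta_1$. Then axiom (iv) and additivity, together with the rank one case, yield the formula for $\F_\eta$.

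The main obstacle is the bookkeeping in this last step: one must verify that the local Artin conductor defined in \eqref{eq:localartintwisted} behaves correctly under $f_*$, i.e.\ satisfies the conductor--discriminant type relation
\[
a(T,f_*\mathcal L,\omega)=a(T_1,\mathcal L,f^*\omega)+\mathrm{rank}(\mathcal L_{\bar\eta_1})\cdot \mathrm{ord}_s(\mathfrak d_{T_1/T}),
\]
and analogously that $\det(\mathrm{Frob}_s;(f_*\G_1)_{\bar s})$ transforms compatibly, so that the Brauer-induction recombination is exact. Once these compatibilities are in place, combining the rank one twist formula through Brauer's identity produces \eqref{eq:localtwist} for arbitrary smooth $\F_\eta$, and together with case (a) this completes the proof.
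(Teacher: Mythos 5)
This lemma carries a citation to Laumon's Th\'eor\`eme 3.1.5.6, and the paper supplies no proof of its own; it is simply quoted as an input. So there is nothing in the paper to compare your argument against, and the proposal has to be judged on its own merits.

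Your d\'evissage to the punctual case and the $j_!$-of-smooth case is the right first step, and the punctual computation is correct. The serious problem is in the reduction from arbitrary rank to rank one. Brauer's induction theorem (\cite[Theorem 19]{Serre97}) is a statement about finite groups, whereas $\F_\eta$ is a continuous $\overline{\mathbb Q}_\ell$-representation of the infinite profinite group $\mathrm{Gal}(\bar\eta/\eta)$; such a representation does not in general factor through any finite quotient (already $\mathrm{Frob}_s$ typically has infinite order in $GL_n(\overline{\mathbb Q}_\ell)$). So the step ``express $[\F_\eta]$ as a $\mathbb Z$-linear combination of inductions of rank-one characters'' is not available as stated. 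This is exactly the obstruction that makes the $\overline{\mathbb Q}_\ell$-theory of local constants delicate: Deligne's original construction circumvents it by reducing to torsion coefficients (where Brauer does apply and where the Swan conductor and $\varepsilon_0$ are insensitive to $\mathrm{mod}\ \ell$ reduction) and then lifting; Laumon's proof of 3.1.5.4--3.1.5.6 instead uses the $\ell$-adic Fourier transform and the stationary-phase formula, and the twist formula falls out of properties of that transform. Either route would have to replace your naive Brauer step.

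Two smaller issues. First, your rank-one case invokes axiom (v), which is stated for $j_*$ of a rank-one smooth sheaf, while your d\'evissage lands on $j_!\F_\eta$; when $\F_\eta$ is unramified these differ by a punctual contribution that must be cancelled against case (a) -- not hard, but it is glossed over. Second, the reference to Deligne--Henniart for the abelian rank-one subcase is slightly off: their theorem concerns twists by \emph{sufficiently ramified} characters, whereas here $\G|_\eta$ is \emph{unramified}; the unramified twist formula for Tate's local constants $\varepsilon(\chi,\Psi_\omega)$ is classical and should be attributed to Tate \cite{Tate}. Finally, note that since $\G$ is lisse on $T$ and $\overline{\mathbb Q}_\ell$ is algebraically closed, you can freely reduce $\G$ itself to rank one in the Grothendieck group (sum over Frobenius eigenvalues), which simplifies the bookkeeping you worry about at the end -- but it does not help with the unjustified Brauer induction on $\F_\eta$.
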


\subsection{}Let $T$ be a henselian trait of equicharacteristic $p$ with generic point $\eta$ and closed point $s$, such that the residue field $k(s)$ of $s$ is a finite extension of the fixed  finite field  $k$. Let $V$ be a $\bQl$-sheaf on $\eta$ and $j\colon \eta\to T$ be the immersion. We define 
\begin{equation}\label{eq:e0forlaumon}
\varepsilon_0(T,V,\omega)\coloneqq \varepsilon(T,j_!V,\omega).
\end{equation}
Let $V^\prime\to V\to V^{\prime\prime}\to V^\prime[1]$ be a distinguished triangle in $D_c^b(\eta,\bQl)$.
Since $j_!$ is an exact functor, by Theorem \ref{thm:laumonepsilon} (ii), we have
\begin{equation}\label{eq:e00forlaumon}
\varepsilon_0(T,V,\omega)=\varepsilon_0(T,V^\prime,\omega)\cdot \varepsilon_0(T,V^{\prime\prime},\omega).
\end{equation}
For any $\mathcal F\in D_c^b(T,\bQl)$, we have a distinguished triangle 
\begin{align}
j_!j^\ast\mathcal F\to \mathcal F\to i_\ast i^\ast\mathcal F\to j_!j^\ast\mathcal F[1],
\end{align}
where $i\colon s\to T$ is the closed immersion.
By Theorem \ref{thm:laumonepsilon} (ii), we have 
\begin{align*}\varepsilon(T,\mathcal F,\omega)=\varepsilon(T,j_!j^\ast\mathcal F,\omega)\cdot \varepsilon(T,i_\ast i^\ast \mathcal F).
\end{align*} 
By \eqref{eq:e0forlaumon} and Theorem \ref{thm:laumonepsilon} (iii), we obtain the following equality
\begin{align}\label{eq:e01forlaumon}
\varepsilon(T,\mathcal F,\omega)=\det(-{\rm Frob_s};\mathcal F_{\bar s})^{-1}\cdot \varepsilon_0(T,\mathcal F|_{\eta},\omega).
\end{align}
If $W$ is the restriction to $\eta$ of an smooth $\bQl$-sheaf on $T$, then by \eqref{eq:localtwist} and \eqref{eq:e0forlaumon} we have
\begin{equation}\label{eq:e00forlaumontwist}
\varepsilon_0(T,V\otimes W,\omega)=\varepsilon_0(T,V,\omega)^{{\rank}W}\cdot\det({\rm Frob}_s;W)^{{\rm rank}V+\Sw V+{\rm rank}(V)\cdot {\rm ord}_s(\omega)}
\end{equation}
for any $\bQl$-sheaf $V$ on $\eta$.

\subsection{}\label{sub:localconsforfinite}
Now, we turn to recall the results for local constants with finite coefficients. Let $\Lambda $ be a finite field of characteristic $\ell$.
Let $K$ be a complete discrete valuation field with finite residue field $k$ of characteristic $p(p\neq \ell)$.
Let $\mathcal O_K$ be the  ring of integers of $K$ and  $S=\mathrm{Spec} {\mathcal O_K}$. Let $\overline{K}$ be a separable closure
of $K$ and $W_K=W(\overline{K}/K)$ the Weil group of $\overline{K}$ over $K$ (cf.   \cite[2.2.4]{Deligne} and \cite{Tate2}).
Let $s\in S$ be the closed point and  $\eta\in S$  the generic point.

We fix a $\Lambda$-valued non-trivial additive character $\psi$ of $K$, and the $\Lambda$-valued Haar measure $dx$ on $K$ such that $\int_{\mathcal O_K}dx=1$.
Let $n(\psi)$ be the maximal integer $n$ such that $\psi$ is trivial on $\pi_K^{-n}\mathcal{O}_K$, where $\pi_K$ is a uniformizer of $K$ (cf. \cite[3.4]{Deligne}).

We define the \emph{total dimension} $\dimtot(V)$ of a representation $V$ of $W_K$ as $\rank(V)+\Sw(V)$, where $\Sw(V)$ is the Swan conductor of $V$ (cf. \cite[19.3]{Serre97}).
For $\F\in D^b_c(S,\Lambda)$, $\dimtot(\F)$ is defined to be 
\[\dimtot(\F_{\bar{\eta}})=\sum_{i}(-1)^i\cdot \dimtot (\mathcal H^i(\mathcal F_{\bar\eta})).\]

We denote by $K(S,\Lambda)$ the Grothendieck group of $D_c^b(S,\Lambda)$, and elements of $K(S,\Lambda)$ will be called {\it virtual $\Lambda$-sheaves}.
We denote by $K(W_K,\Lambda)$ the Grothendieck group of the category of continuous $\Lambda$-representations of $W_K$. Elements of  $K(W_K,\Lambda)$ will be called {\it virtual $\Lambda$-representations}.

\subsection{}
Deligne and Langlands attached to every  continuous $\Lambda$-representation $V$ of $W_K$ a local constant $\varepsilon_0(V, \psi, dx)\in \Lambda^\times$, which is uniquely characterized by the  conditions of \cite[Th\'eor\`eme 6.5]{Deligne}.
For $\F\in D_c^b(S,\Lambda)$, we define (cf. \cite[(7.6.3)]{Deligne})
\begin{equation}\label{eq:deligneepsilondef}
\varepsilon(\F,\psi, dx):=\det(-\mathrm{Frob}_s;\F_{\bar{s}})^{-1}\cdot \varepsilon_0(\F_{\bar{\eta}},\psi, dx)  \qquad{\rm in}~\Lambda^\times.
\end{equation}
In \eqref{eq:deligneepsilondef}, when $\mathcal F_{\bar s}=0$, we put $\det(-{\rm Frob}_s;\mathcal F_{\bar s})^{-1}=1$.
The local constant $\varepsilon_0(V, \psi, dx)$ is additive with respect to $V$ by  \cite[Th\'eor\`eme 6.5]{Deligne}.
By \eqref{eq:deligneepsilondef}, the $\varepsilon(\F,\psi, dx)$ is also additive with respect to $\mathcal F$.
Therefore, they induce group homomorphisms $K(W_K,\Lambda)\xrightarrow{\varepsilon_0}\Lambda^\times$ and $K(S,\Lambda)\xrightarrow{\varepsilon}\Lambda^\times$ respectively.
\subsection{}\label{sub:notationsforepsilon}
Let $\Lambda$ be a finite field of characteristic $\ell$ or $\Lambda=\bQl$.
In order to give a uniform proof of Theorem \ref{main} in both cases, we introduce a uniform notation in \ref{sub:uniform}. In the rest part of this section, 
let $X$ be a smooth projective geometrically connected curve over a finite field $k$ of characteristic $p(p\neq \ell)$,  $K$  the function field of $X$ and  $\psi_0\colon \mathbb F_p\to {\Lambda}^\times$  a non-trivial additive character.
Let $\omega$ be 
a non-zero rational $1$-form  on $X$. It is well-known that $\omega$ defines a non-trivial additive character $\psi_\omega$ on $\mathbb A_K/K$.
Its local component at $v\in |X|$ is the ${\Lambda}^\times$-valued function defined by $(\psi_\omega)_v(a)=\psi_0({\rm Tr}_{k(v)/\mathbb F_p}(\mathrm{res}_v(a\cdot\omega)))$ for any $a\in K_v$.
The number $n((\psi_\omega)_v)$ (cf. \ref{sub:localconsforfinite}) equals to the order $\ord_v(\omega)$ of the differential form.
Let $dx=\otimes_{v\in\abs{X}}dx_v$ be the Haar measure on $\mathbb A_K$ with values in 
$\Lambda$ such that $\int_{O_{K_v}}dx_v=1$ for every place $v$.

\subsection{}\label{sub:uniform}
Let $v\in X$ be a closed point and  $X_{(v)}$ the henselization of $X$ at  $v$. 
Let $\eta_v$ be the generic point of $X_{(v)}$.
We define two group homomorphisms
\begin{align}\label{eq:defep3}
\varepsilon_v(\bullet, \omega)&\colon K(X_{(v)},\Lambda)\to \Lambda^\times\\
\label{eq:defep3-1}\varepsilon_{0v}(\bullet, \omega)&\colon K(\eta_{v},\Lambda)\to \Lambda^\times
\end{align}
as follows: let $\mathcal F\in K(X_{(v)},\Lambda)$ and $V\in K(\eta_v,\Lambda)$. 
If $\Lambda=\bQl$,  then we put
\begin{align}\label{eq:defep0}
\varepsilon_v(\mathcal F, \omega)\coloneqq \varepsilon(X_{(v)},\mathcal F, \omega|_{X_{(v)}})\qquad{\rm and}\qquad
\varepsilon_{0v}(V, \omega)\coloneqq\varepsilon_0(X_{(v)},V, \omega|_{X_{(v)}})
\end{align}
by using the local constant defined in Theorem \ref{thm:laumonepsilon} and \eqref{eq:e0forlaumon}.
If $\Lambda$ is a finite field, 
we define 
\begin{align}\label{eq:defep1}
\varepsilon_v(\mathcal F, \omega)\coloneqq\varepsilon(\F_{(v)},(\psi_\omega)_v, dx_v)\qquad{\rm and}\qquad
\varepsilon_{0v}(V, \omega)\coloneqq\varepsilon_0(V,(\psi_\omega)_v, dx_v).
\end{align}
by using \eqref{eq:deligneepsilondef}. 

For any $\mathcal F\in K(X_{(v)},\Lambda)$, we have (cf. \eqref{eq:e01forlaumon} and \eqref{eq:deligneepsilondef})
\begin{align}\label{eq:eq:defep2}
\varepsilon_v(\mathcal F,\omega)=\det(-{\rm Frob}_v;\mathcal F_{\bar v})^{-1}\cdot \varepsilon_{0v}(\mathcal F|_{\eta_v},\omega).
\end{align}
In \eqref{eq:eq:defep2}, when $\mathcal F_{\bar v}=0$, we put $\det(-{\rm Frob}_v;\mathcal F_{\bar v})^{-1}=1$.
\subsection{}In the following lemma, we collect some  formulas which are true  in both cases. These formulas  will be very useful in the proof of Theorem \ref{main}.
\begin{lemma}[{\cite{Deligne,Laumon}}]\label{twist} 
Under the assumptions in \ref{sub:notationsforepsilon}. Let $v\in X$ be a closed point, $\eta_v$  the generic point of $X_{(v)}$ and $\mathcal F\in K(X_{(v)},\Lambda)$. 
Then we have
\begin{enumerate}
\item If $\mathcal F_{\bar\eta_v}=0$, then 
\begin{align}\label{eq:twist-NEW-1}
\varepsilon_v(\mathcal F,\omega)=\det(-{\rm Frob}_v;\mathcal F_{\bar v})^{-1}.
\end{align}
\item If $\mathcal G$ is a smooth $\Lambda $-sheaf on $X_{(v)}$, then we have 
\begin{align}\label{eqYZ:t1}
\varepsilon_v(\mathcal F\otimes\mathcal G,\omega)&=\varepsilon_v(\F,\omega)^{\mathrm{rank}\G}\cdot \det(\mathrm{Frob}_v;\G_{\bar v})^{ \mathrm{dimtot} \F_{\bar{\eta}_v}-\mathrm{rank} \F_{\bar{v}}+\mathrm{rank}\F_{\bar{\eta}_v}\cdot \mathrm{ord}_v(\omega)}\\
\nonumber\varepsilon_{0v}((\mathcal F\otimes\mathcal G)|_{\eta_v},\omega)&=\varepsilon_{0v}(\F|_{\eta_v},\omega)^{\mathrm{rank}\G}\cdot \det(\mathrm{Frob}_v;\G_{\bar v})^{ \mathrm{dimtot} \F_{\bar{\eta}_v}+\mathrm{rank}\F_{\bar{\eta}_v}\cdot \mathrm{ord}_v(\omega)}
\end{align}
\item If $\F$ is smooth of rank $0$,
then we have
\begin{equation}\label{eqYZ:t2}
\varepsilon_v(\F,\omega)=\det(\mathrm{Frob}_v;\F_{\bar{v}})^{\ord_v(\omega)}.
\end{equation}
\item 
If $\F_{\bar{v}}$ is of rank $0$ and
if ${\rm ord}_v(\omega)=0$,
then we have
\begin{equation}\label{eqYZ:t3}
\varepsilon_v(\F,\omega)=\varepsilon_{0v}(\F_{\bar{v}},\omega)^{-1}\cdot\varepsilon_{0v}(\F|_{{\eta_v}},\omega).
\end{equation}
Here, we regard $\F_{\bar{v}}$ as the generic fiber of the smooth virtual $\Lambda$-sheaf on $X_{(v)}$ defined by the action of $\pi_1(X_{(v)},\bar{v})$ on the stalk $\F_{\bar{v}}$.
\end{enumerate}
\end{lemma}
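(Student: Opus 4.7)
The plan is to establish each of the four formulas by reducing to the axiomatic characterizations of local constants: Theorem~\ref{thm:laumonepsilon} together with the twist formulas \eqref{eq:localtwist} and \eqref{eq:e00forlaumontwist} in the $\bQl$ case, and the Deligne--Langlands characterization \cite[Th\'eor\`eme 6.5]{Deligne} together with \cite[7.11]{Deligne} in the finite coefficient case. Because the uniform notation \eqref{eq:defep0}/\eqref{eq:defep1} is designed precisely so that the two cases have identical formal properties, I would carry out a single argument valid for both, invoking the appropriate reference as needed.

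Formula (1) is immediate. If $\mathcal F_{\bar\eta_v}=0$ then $\mathcal F$ is supported on the closed point, so in the $\bQl$ case the statement is Theorem~\ref{thm:laumonepsilon}(iii); in the finite coefficient case it follows from \eqref{eq:deligneepsilondef} together with additivity of $\varepsilon_0$, which forces $\varepsilon_0(0,\psi,dx)=1$. Formula (2) is a repackaging of the local twist formula: starting from \eqref{eq:localtwist}, one rewrites the Artin conductor \eqref{eq:localartintwisted} as $\dimtot \mathcal F_{\bar\eta_v} - \rank \mathcal F_{\bar v} + \rank \mathcal F_{\bar\eta_v}\cdot \mathrm{ord}_v(\omega)$ using the identity $\rank + \Sw = \dimtot$, which gives the first line of \eqref{eqYZ:t1}; the second line follows the same way from \eqref{eq:e00forlaumontwist}, the $-\rank \mathcal F_{\bar v}$ term being absent because one works purely on the generic fiber.

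Formula (3) will follow from (2) applied with the constant sheaf $\Lambda$ in the role of $\mathcal F$ and the given smooth virtual sheaf $\mathcal F$ of rank $0$ in the role of $\mathcal G$. Since $\rank \mathcal F = 0$ the factor $\varepsilon_v(\Lambda,\omega)^{\rank \mathcal F}$ is trivial, and because $\dimtot \Lambda_{\bar\eta_v} = \rank \Lambda_{\bar v} = \rank \Lambda_{\bar\eta_v} = 1$ the exponent of $\det(\mathrm{Frob}_v;\mathcal F_{\bar v})$ collapses to $\mathrm{ord}_v(\omega)$. For formula (4) I would start from \eqref{eq:eq:defep2}; the rank $0$ assumption on $\mathcal F_{\bar v}$ absorbs the sign, so $\varepsilon_v(\mathcal F,\omega) = \det(\mathrm{Frob}_v;\mathcal F_{\bar v})^{-1}\cdot \varepsilon_{0v}(\mathcal F|_{\eta_v},\omega)$, and it remains to identify $\det(\mathrm{Frob}_v;\mathcal F_{\bar v})$ with $\varepsilon_{0v}(\mathcal F_{\bar v},\omega)$ under the hypotheses. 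This comes from the second line of \eqref{eqYZ:t1} with $\mathcal F = \Lambda$ and $\mathcal G$ the smooth extension of $\mathcal F_{\bar v}$ to $X_{(v)}$: the factor $\varepsilon_{0v}(\Lambda|_{\eta_v},\omega)^{\rank \mathcal G}$ is $1$ because $\rank \mathcal G = 0$, and the exponent of $\det(\mathrm{Frob}_v;\mathcal F_{\bar v})$ reduces to $1$ using $\dimtot \Lambda_{\bar\eta_v} = 1$ and $\mathrm{ord}_v(\omega) = 0$.

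The main subtlety I expect is purely bookkeeping: keeping straight the interplay between the $-\mathrm{Frob}$ in the definition \eqref{eq:eq:defep2} and the $+\mathrm{Frob}$ in the twist formula, and between ranks of generic and special fibers versus total dimensions. Once the identity $\dimtot = \rank + \Sw$ is used freely and the uniform setup of \ref{sub:uniform} is in force, each of the four formulas reduces to a short calculation unifying the $\bQl$ and finite coefficient cases.
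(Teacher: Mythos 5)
Your proposal is correct and takes essentially the same route as the paper: (1) and (2) are quoted from Theorem \ref{thm:laumonepsilon}(iii), \eqref{eq:deligneepsilondef}, \eqref{eq:localtwist}, \eqref{eq:e00forlaumontwist} and Deligne's finite-coefficient twist formula, and (3) is exactly the paper's application of \eqref{eqYZ:t1} with $\Lambda$ and the rank-zero smooth virtual sheaf interchanged. For (4) the difference is only cosmetic: the paper applies (3) to the smooth extension of $\F_{\bar v}$ to get $\varepsilon_v(\F_{\bar v},\omega)=1$ and takes the ratio of two instances of \eqref{eq:eq:defep2}, whereas you read off $\varepsilon_{0v}(\F_{\bar v},\omega)=\det(\mathrm{Frob}_v;\F_{\bar v})$ directly from the generic-fibre line of \eqref{eqYZ:t1}; the two manipulations are equivalent.
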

\begin{proof}
(1) If $\Lambda=\bQl$, \eqref{eq:twist-NEW-1} follows from Theorem \ref{thm:laumonepsilon} (iii).
If $\Lambda$ is a finite field, \eqref{eq:twist-NEW-1} follows from \eqref{eq:deligneepsilondef}.

(2) If $\Lambda=\bQl$, the assertion follows from \eqref{eq:e00forlaumontwist} and  \eqref{eq:localtwist}. If $\Lambda$ is a finite field,
by \cite[Th\'eor\`eme 6.5]{Deligne} and the content above of \cite[Remark 6.6, p.557]{Deligne}, the constant $\varepsilon_0$ also satisfies the formula \cite[(5.5.3)]{Deligne}.

(3) By \eqref{eqYZ:t1}, we have
$\varepsilon_v(\mathcal F,\omega)=\varepsilon_v(\Lambda,\omega)^{\mathrm{rank}\F}\cdot \det(\mathrm{Frob}_v;\F_{\bar v})^{\mathrm{ord}_v(\omega)}=\det(\mathrm{Frob}_v; \F_{\bar{v}})^{\mathrm{ord}_v(\omega)}$.


(4)By the  formula   (\ref{eqYZ:t2}), we have
\[
\varepsilon_v(\F_{\bar{v}}, \omega)=1.
\]
Taking the ratio of 
\[
\varepsilon_v(\F,\omega)=\det(-\mathrm{Frob}_v;\F_{\bar{v}})^{-1}\cdot\varepsilon_{0v}(\F|_{{\eta}_v},\omega)
\]
and
\[
\varepsilon_v(\F_{\bar{v}},\omega)=\det(-\mathrm{Frob}_v;\F_{\bar{v}})^{-1}\cdot\varepsilon_{0v}(\F_{\bar{v}},\omega),
\]
we finish the proof.
\end{proof}

\subsection{}Under the assumptions in \ref{sub:notationsforepsilon},
we recall Deligne and Laumon's product formula, which gives a relation between global $\varepsilon$-factors and local $\varepsilon$-factors.
We define a group homomorphism $K(X,\Lambda)\to\Lambda^\times$ by
\begin{equation}
\F\mapsto\varepsilon(X,\F)\coloneqq\det(-\mathrm{Frob}_k;R\Gamma(X_{\bar{k}},\F))^{-1}.
\end{equation}
We have the following product formula, which is conjectured by Deligne  \cite{De2} and proved by Laumon in the outstanding paper \cite[3.2.1.1]{Laumon}.

\begin{theorem}[{\cite[3.2.1.1]{Laumon}}]\label{product}
Let $k$ be a finite field of characteristic $p$ with $q$ elements. Let  $\Lambda$ be a finite field of characteristic $\ell(\ell\neq p)$ or $\Lambda=\bQl$.
Let $X$ be a smooth projective geometrically connected curve over $k$ of genus $g$, let $\omega$ be a non-zero rational $1$-form on $X$ and $\F\in K(X,\Lambda)$.
Then we have
\begin{equation}\label{eqYZ:product}
\varepsilon(X,\F)=q^{(1-g)\rank (\F)}\prod_{v\in\abs{X}}\varepsilon_v(\F|_{X_{(v)}},\omega) \qquad{\rm in}~\Lambda^\times.
\end{equation}
\end{theorem}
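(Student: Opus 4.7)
The plan is to reduce the product formula to simpler building blocks using additivity and then handle each building block separately.

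First, observe that both sides of \eqref{eqYZ:product} define group homomorphisms from $K(X,\Lambda)$ to $\Lambda^\times$: the left side directly, and the right side because each $\varepsilon_v(\bullet,\omega)$ is additive (by Theorem \ref{thm:laumonepsilon}(ii) in the $\bQl$-case and by \cite[Th\'eor\`eme 6.5]{Deligne} in the finite coefficient case), the rank is additive, and only finitely many local factors differ from $1$. Hence it suffices to verify \eqref{eqYZ:product} on generators of $K(X,\Lambda)$. Using \eqref{eq:twist-NEW-1}, sheaves supported at closed points contribute matching factors on both sides, so I can reduce to the case $\F = j_!\G$ with $j\colon U \hookrightarrow X$ a dense open immersion and $\G$ smooth of rank $r$ on $U$.

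Next I would treat the constant sheaf case $\F = \Lambda$ directly: both sides become elementary expressions involving the genus $g$ and the divisor of $\omega$, via the Lefschetz trace formula applied to $R\Gamma(X_{\bar k},\Lambda)$ for the global side and Tate's explicit formula for the trivial character of $K_v^\times$ for the local side. The residue theorem $\sum_v \deg(v)\ord_v(\omega) = 2g-2$ is what makes the two sides match.

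For a general smooth $\G$ of arbitrary rank, I would invoke Brauer induction (after possibly enlarging $\Lambda$ in the finite coefficient case) to write the corresponding virtual representation of $\pi_1(U)$ as a $\mathbb{Z}$-linear combination of representations induced from rank $1$ characters of open subgroups. Each induced piece arises from a finite étale cover $\pi\colon U' \to U$ extending to a finite map $\pi\colon X' \to X$ on smooth compactifications, and both $\varepsilon(X,\bullet)$ and $\varepsilon_v$ are compatible with $\pi_*$ --- globally by the Leray spectral sequence, locally by Theorem \ref{thm:laumonepsilon}(iv). This reduces \eqref{eqYZ:product} to the case where $\G$ has rank $1$, which corresponds via the reciprocity map to an idele class character of $K$; there the formula becomes the classical product formula for Tate's local constants, equivalent to the functional equation for the associated Hecke $L$-function.

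The hard part is controlling the higher rank, wildly ramified case. Brauer induction introduces twist terms via \eqref{eqYZ:t1} depending on $\omega$ and on the local ramification, and tracking their cancellation requires a subtle compatibility between the local twist exponent $\mathrm{dimtot}\,\F_{\bar\eta_v}-\mathrm{rank}\,\F_{\bar v}+\mathrm{rank}\,\F_{\bar \eta_v}\cdot\ord_v(\omega)$ and global Euler characteristics, which must be kept coherent across all places simultaneously. Laumon's actual argument side-steps this bookkeeping by using the $\ell$-adic Fourier transform and the principle of stationary phase, localizing the global $\varepsilon$-factor intrinsically without tracing through the inductions.
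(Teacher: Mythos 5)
The paper does not prove Theorem \ref{product}; it quotes it from Laumon \cite[3.2.1.1]{Laumon} (and Deligne \cite[Th\'eor\`eme 7.11]{Deligne} for finite coefficients), and the remark immediately after the statement records precisely the dichotomy your sketch runs into. Your reduction steps (additivity of both sides, disposal of punctual sheaves via \eqref{eq:twist-NEW-1}, the constant-sheaf case via the residue theorem) are fine, and in the case where $\Lambda$ is a finite field your Brauer-induction strategy is essentially Deligne's proof, i.e.\ the argument the paper's Remark (2) alludes to: continuity forces the monodromy of a smooth $\Lambda$-sheaf to factor through a finite quotient of $\pi_1(U)$, so Brauer's theorem applies.

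The genuine gap is the case $\Lambda=\bQl$. A smooth $\bQl$-sheaf on $U$ in general has infinite geometric monodromy, so its representation does not factor through any finite group, and Brauer induction --- a statement about virtual characters of finite groups --- simply cannot be invoked; the obstruction is not, as your last paragraph suggests, a ``bookkeeping'' problem with the twist exponents in \eqref{eqYZ:t1}, but the fact that the reduction to rank-one characters never gets started. This is exactly why Deligne could only prove the formula for sheaves with finite geometric monodromy (or members of compatible systems) and why Laumon's proof proceeds by an entirely different mechanism ($\ell$-adic Fourier transform and the stationary phase principle), which your proposal invokes by name but does not reproduce; as written, the proposal therefore proves the theorem only for finite $\Lambda$, not in the stated generality. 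A secondary point, even in the finite-coefficient case: $\varepsilon_v$ is inductive only for virtual sheaves of generic rank $0$ (condition (iv) of Theorem \ref{thm:laumonepsilon}), so ``compatibility with $\pi_*$'' must be applied to $\F-\rank\F\cdot[\Lambda]$ rather than to $\F$ itself; this degree-zero normalization is where the factor $q^{(1-g)\rank\F}$ and the discriminant terms of the covers get absorbed, and it needs to be made explicit for the induction over covers to close.
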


\begin{remark}~

\begin{enumerate}
\item Deligne  proves \eqref{eqYZ:product} under the  condition: 
$\mathcal F$ is a smooth $\ell$-adic sheaf on an open dense subscheme $U$ of $X$ and has finite geometric monodromy, i.e., there exists a finite \'etale covering $U^\prime$ of $U$ such that $\mathcal F|_{U^\prime}$ is geometrically constant $($cf. \cite[Expos\'e III]{De2}, \cite[Remarques (3.2.1.8)]{Laumon} and \cite[p.120]{Katz87}$)$. 

When $\mathcal F$ has  finite geometric monodromy, one uses Brauer induction theorem to reduce to the abelian case, where the existence of local constants and the product formula is classical \cite{Tate}. Deligne's argument also works for mod $\ell$ representations and gives a mod $\ell$ product formula \cite[Th\'eor\`eme 7.11]{Deligne}. From this Deligne deduces that the product formula holds for any $\ell$-adic sheaf $\mathcal F$ if moreover $\mathcal F$ is a part of a compatible system of $\ell$-adic representations for an infinite set of primes $\ell$ \cite[Th\'eor\`eme 9.3]{Deligne}.

\item If $\Lambda$ is a finite field, 
the product formula \eqref{eqYZ:product} can be obtained by using Brauer induction theorem $($cf. the proofs of \cite[Proposition 3.2.1.7]{Laumon} and \cite[Th\'eor\`eme 7.11]{Deligne}$)$.

\item The factor $q^{(1-g){\rm rank}\mathcal F}$ will disappear if we choose a Haar measure $dx$ on $\mathbb A_K$ such that $\mathbb A_K/K$ is of volume 1 $($cf. \cite[Expos\'e IV, 2.1]{De2}$)$.
\end{enumerate}

\end{remark}

\section{Preliminaries on cycles}\label{sec:poc}
\subsection{}
In this section, we prove a blow up formula (cf. Lemma \ref{blowup}) and a fibration formula (cf. Lemma \ref{cc}) for the Gysin pull-back of cycles by the zero section of a cotangent bundle. These two formulas will be used in the proof of Proposition \ref{prop:cc2} and Theorem \ref{main}.

\begin{definition}Let $X$, $Y$ and $W$ be  smooth schemes  over a field $k$. 
We denote by $T^\ast_XX\subseteq T^\ast X$ the zero section of the cotangent bundle $T^\ast X$ of $X$.
Let $C$ be a conical closed subset of $T^*X$, i.e., a closed subset which is stable under the action of the multiplicative group $\mathbb{G}_m$. 

\begin{enumerate}
\item $($\cite[1.2]{Beilinson}$)$ Let $h\colon W\to X$ be a morphism over $k$. 
We say that $h$  is $C$-transversal at $w\in W$ if the fiber 
$\left((C\times_X W)\cap dh^{-1}(T^\ast_WW)\right)\times_W w$ is contained in the zero-section $ T^\ast_XX\times_X W\subseteq T^\ast X\times_X W$,
where $dh\colon T^*X\times_XW\to T^*W$ is the canonical map.
We say that $h$ is $C$-transversal if $h$ is $C$-transversal at any point of $W$.

If $h$ is $C$-transversal,
we define $h^\circ C$ to be the image of $C\times_XW$ under the map $dh\colon T^*X\times_XW\to T^*W$.
By  \cite[Lemma 3.1]{Saito}, $h^\circ C$ is a closed conical subset of $T^\ast W$.
\item $($\cite[Definition 7.1]{Saito}$)$ Assume that $X$ and $C$ are purely of dimension $d$ and 
that $W$ is purely of dimension $m$.
We say that a $C$-transversal map $h\colon W\to X$ is properly $C$-transversal if every irreducible component of $C\times_XW$ is of dimension $m$. 
\item $($\cite[1.2]{Beilinson} and \cite[Definition 5.3]{Saito}$)$ We say that a morphism $f\colon X\to Y$ over $k$ is $C$-transversal at $x\in X$ if the inverse image $df^{-1}(C)\times_Xx$ is contained in the zero-section $T^\ast _YY\times_YX\subseteq T^\ast Y\times_Y X$, where $df\colon T^\ast Y\times_Y X\rightarrow T^\ast X$ is the canonical map.
We say that $f$ is $C$-transversal if $f$ is $C$-transversal at any point of $X$.

We say that a closed point $x\in X$ is at most an isolated characteristic point of $f$ with respect to $C$ if there is an open neighborhood $U$ of $x$ such that the restriction  $f|_{ U-\{x\}}\colon U-\{x\}\rightarrow Y$ is $C\times_X(U-\{x\})$-transversal.
\end{enumerate}
\end{definition}
\subsection{}Let $X$ be a smooth scheme   purely of dimension $d$ over a field $k$. Let $W$  be a smooth scheme purely of dimension $m$ over  $k$.
Assume that  $C\subseteq T^*X$ is a conical closed subset purely of dimension $d$.
Let $Z$ be a $d$-cycle supported on $C$ and $h\colon W\to X$  a  $C$-transversal morphism. Let $\pr_h\colon T^\ast X\times_XW\to T^*X$ be the first projection map.
Since $\pr_h$ is a morphism between smooth schemes, the refined Gysin pull-back $\pr_h^! Z$ is well-defined in the sense of intersection theory \cite[6.6]{Fulton}.
We define  $h^\ast Z\in CH_m(h^\circ C)$ \cite[Definition 7.1.2]{Saito} to be 
\begin{equation}\label{eq:pullbackCCdef}
h^\ast Z:=dh_*(\pr_h^!Z).
\end{equation} 
Notice that the push-forward is well-defined since $dh\colon T^\ast X\times_XW\to T^\ast W$ is finite on $C\times_XW$ by \cite[Lemma 1.2 (ii)]{Beilinson}. 
We also put
\begin{equation}\label{eq:pullbackCCdef-1}
h^! Z:=(-1)^{m-d}\cdot h^\ast Z.
\end{equation} 

If moreover $h$ is properly $C$-transversal, then every irreducible component of $h^\circ C$ is of dimension $m$. Thus $CH_m(h^\circ C)=Z_m(h^\circ C)$. Hence we may regard $h^\ast Z$ as a $m$-cycle on $T^\ast W$, which is supported on $h^\circ C$.

\subsection{}We  use refined intersections in the sense of \cite[Definition 8.1.1]{Fulton}. Let us recall a special case. 
Let $f\colon X\to Y$ be a morphism of finite type to   a smooth scheme $Y$ purely of dimension $n$ over a field $k$. Let $p\colon Y^\prime\to Y$ be any morphism, $x\in CH_r(X)$ and $y\in CH_s(Y^\prime)$. Consider the following Cartesian diagrams
\begin{equation*}
\xymatrix{
X\times_Y Y^\prime\ar[r]\ar[d]\ar@{}|\Box[rd]&Y^\prime\ar[d]^-p&X\times_Y Y^\prime\ar[d]\ar[r]\ar@{}|\Box[rd]& X\times_k Y^\prime\ar[d]^-{{\rm id}\times p}\\
X\ar[r]^-f&Y&X\ar[r]^{\Gamma_f}&X\times_k Y
}
\end{equation*}
where $\Gamma_f$ is the graph of $f$. Then we define
\begin{align}\label{eq:def:intersectionproduct}
x{\cdot}_{f} y\coloneqq \Gamma_f^!(x\times y)\in CH_{r+s-n}(X\times_YY^\prime),
\end{align}
where $x\times y\in CH_{r+s}(X\times_k Y^\prime)$ is the exterior product (cf. \cite[1.10]{Fulton}) and $\Gamma_f^!$ is the refined Gysin homomorphism (cf. \cite[6.2]{Fulton}).
By \cite[Corolllary 8.1.3]{Fulton}, for any cycle $x$ on $X$, we have
\begin{align}\label{eq:trivialontarget}
x\cdot_f [Y]=x.
\end{align}
If $X$ is purely of dimension $d$, then the refined Gysin homomorphism 
\begin{align}\label{eq:refinedGysing00}
f^!\colon CH_k(Y^\prime)\to CH_{k+d-n}(X\times_YY^\prime)
\end{align}
is defined by the formula (cf. \cite[Definition 8.1.2]{Fulton})
\begin{align}\label{eq:refinedGysing01}
f^!(y)=[X]\cdot_fy.
\end{align}
When $f$ is flat, then $f^!=f^\ast$ is the flat pull-back \cite[Proposition 8.1.2]{Fulton}. When $f$ is a regular immersion, we have (cf. \cite[Corollary 8.1.1]{Fulton})
\begin{align}\label{eq:refinedGysing02}
f^!(y)=[X]\cdot_{{\rm id}_Y}y.
\end{align}

\subsection{}\label{sub:gysinzero}
Let $X$ be a smooth   scheme purely of dimension $d$  over a field $k$. We denote by   $0_X\colon X\to T^\ast X $ the zero section of the cotangent bundle $T^\ast X$. We denote by $0_X^!\in CH^d(X\to T^\ast X)$ the (refined) Gysin map \cite[6.2]{Fulton}, where $CH^d(X\to T^\ast X)$ is the bivariant Chow group \cite[Definition 17.1]{Fulton}). 

We prove the following blow up formula for the Gysin map $0_X^!$.
\begin{lemma}\label{blowup}
Let $X$ and $Y$ be smooth and connected schemes over a field $k$ and let $i\colon Y\hookrightarrow X$ be a closed immersion of codimension $c$.
Let $\pi\colon \widetilde{X}\to X$ be the blow up of $X$ along $Y$.
Let $C\subseteq T^*X$ be a conical closed subset purely  of dimension $d=\dim X$ and let $Z$ be a $d$-cycle supported on $C$.
Suppose $\pi$ and $i$ are properly $C$-transversal.
Then we have an equality in $CH_0(X)$:
\begin{align}\label{eq:blowupNEW}
\pi_*(0^!_{\widetilde X}(\pi^*Z))=0_X^!(Z)+(-1)^c\cdot (c-1)\cdot i_* (0_Y^!(i^*Z)),
\end{align}
where $\pi^\ast Z$ and $i^\ast Z$ are defined by \eqref{eq:pullbackCCdef}. For the definition of the Gysin maps $0^!_{\bullet}$, see Subsection \ref{sub:gysinzero}. 
\end{lemma}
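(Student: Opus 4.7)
My plan is to exploit that $\pi$ is an isomorphism away from the exceptional divisor $E=\pi^{-1}(Y)=\mathbb{P}(N_{Y/X})$, so that the defect
\[
\pi_*\bigl(0^!_{\widetilde X}(\pi^*Z)\bigr)-0_X^!(Z)
\]
is automatically a $0$-cycle on $X$ supported on $Y$, hence of the form $i_*\beta$ for some $\beta\in CH_0(Y)$. The task is then to identify $\beta$ with $(-1)^c(c-1)\cdot 0_Y^!(i^*Z)$ via an excess intersection computation along $E$.

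First I would analyze the canonical diagram
$$
\xymatrix@R=1em{
& T^*X\times_X\widetilde{X} \ar[dl]_-{\pr_\pi} \ar[dr]^-{d\pi} & \\
T^*X & & T^*\widetilde{X}
}
$$
together with the exact sequence of differentials $\pi^*\Omega^1_X\to\Omega^1_{\widetilde X}\to\Omega^1_{\widetilde X/X}\to 0$, whose cokernel is supported on $E$ and may be expressed in terms of the tautological quotient on $\mathbb{P}(N_{Y/X})$. The proper $C$-transversality of $\pi$ ensures that $\pi^*Z=d\pi_*(\pr_\pi^!Z)$ is an honest $d$-cycle on $T^*\widetilde X$, and I would decompose its intersection with the zero section into a contribution from $\widetilde X\setminus E$ (which pushes forward under $\pi$ to $0_X^!(Z)$, since $\pi$ is an isomorphism there and the two Gysin maps match) and a residual contribution supported on $E$, where $d\pi$ fails to be an isomorphism and the excess intersection formula of \cite[Theorem 6.3]{Fulton} applies.

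For the excess term, the relevant excess bundle is the kernel of the restriction of $d\pi$ to $E$, which by the above exact sequence is a specific twist of the tautological line bundle on $E=\mathbb{P}(N_{Y/X})$. The residual contribution then becomes the push-forward along the projective bundle map $E\to Y$ of a top Chern class, and a standard Segre/Chern class calculation on $\mathbb{P}^{c-1}$-bundles produces the numerical factor $c-1$. The sign $(-1)^c$ emerges from the convention $i^!=(-1)^c i^*$ of (3.3) built into the statement; combined with the proper $C$-transversality of $i$ (which makes $i^*Z$ an honest $(d-c)$-cycle in the sense of (3.2)), these two contributions should assemble to the claimed formula.

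The main obstacle I anticipate is the explicit excess intersection calculation along $E$: one must identify precisely the excess normal bundle arising from the failure of $d\pi$ to be an isomorphism, relate it via the Euler sequence to the vertical (co)tangent bundle of the projective fibration $E\to Y$, and then integrate the resulting top Chern class along the fibers $\mathbb{P}^{c-1}$. The coefficient $c-1$ is ultimately the classical identity $\chi(\widetilde X)=\chi(X)+(c-1)\chi(Y)$ for Euler characteristics under blow-ups of smooth centers, specialized to the characteristic-class context through the refined Gysin formalism of Saito and Beilinson.
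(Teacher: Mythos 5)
Your overall strategy---localize the defect along the exceptional divisor $E$ and compute it via the kernel of $d\pi$ and Chern classes on the $\mathbb P^{c-1}$-bundle $E\to Y$---is the right one and is the spirit of the paper's argument, but as written there are concrete gaps. First, the bundle you propose to feed into the excess intersection formula is misidentified: the fibrewise kernel $K$ of $d\pi\colon T^\ast X\times_X\widetilde X\to T^\ast\widetilde X$ along $E$ sits in the exact sequence $0\to K\to N_{Y/X}\times_YE\to N_{E/\widetilde X}\to 0$, so it is the rank $(c-1)$ bundle $\Omega^1_{E/Y}(1)$, not a twist of the tautological line bundle (it is a line bundle only when $c=2$). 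Moreover \cite[Theorem 6.3]{Fulton} does not apply off the shelf: in the relevant fibre square only the zero section $0_{\widetilde X}$ is a regular embedding, $d\pi$ is neither flat nor a regular embedding over $E$, and the cycle being intersected is the pushforward $\pi^\ast Z=d\pi_\ast\pr_\pi^!Z$; one must instead work with the refined class $\pr_\pi^!Z\cdot_{d\pi}\widetilde X$ supported on $V=d\pi^{-1}(\widetilde X)=\widetilde X\cup K$, where $\dim K=d+c-2$ exceeds the expected dimension $d$ as soon as $c>2$. Your preliminary reduction is also not enough: knowing that the defect lies in the image of $i_\ast\colon CH_0(Y)\to CH_0(X)$ does not identify it, since $i_\ast$ need not be injective, and the splitting into an ``open part'' and a ``residual part supported on $E$'' is not canonical in $CH_0$ without the refined formalism.

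Second, the coefficient $(-1)^c(c-1)$ is exactly the content to be proved and cannot be imported from $\chi(\widetilde X)=\chi(X)+(c-1)\chi(Y)$: in this paper that Euler-characteristic statement at the level of characteristic classes (Corollary \ref{cor:blowupcc}) is \emph{deduced} from Lemma \ref{blowup} together with Lemma \ref{lem:blowupsheaf} and Corollary \ref{corYZ:2}, so invoking it here is circular, and the ``standard Segre/Chern class calculation'' you defer is precisely the missing step. For comparison, the paper proves only the case $c=2$ (which suffices for Theorem \ref{main}): there both components of $V$ have dimension $d$, so no excess intersection is needed; the identity $d\pi^!(\widetilde X)=[\widetilde X]+[K]$ with both multiplicities $1$ is obtained by restricting to the dense open complement of $E$, the $[\widetilde X]$-term yields $0_X^!(Z)$ via commutation of Gysin maps and $\pi_\ast\pi^\ast=\mathrm{id}$, and the $[K]$-term is converted into $i_\ast 0_Y^!(i^\ast Z)$ by the divisor class $c_1(\mathcal O_{E}(1))$, the projection formula, and the $\mathbb P^1$-bundle structure of $E\to Y$. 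Your general-$c$ excess-intersection route is viable (it is the route of the companion paper \cite{NYZ} cited in the proof), but it requires the corrected excess bundle and the explicit computation of its top Chern class integrated over the fibres, neither of which your proposal supplies.
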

\begin{proof}We only prove the case $c=2$ which is enough for the proof of main Theorem \ref{main}. 
A different proof for the general case is given in \cite[Lemma 3.3]{NYZ}.
Consider the following diagram
\begin{align}\label{eq:saito:10}
\begin{gathered}
\xymatrix{
&V\ar[d]\ar[r]^e\ar@{}|\Box[rd]&\widetilde X\ar[d]^-{0_{\widetilde X}}\\
T^\ast X&T^\ast X\times_X\widetilde X\ar[l]_-{{\rm pr}_\pi}\ar[r]^-{d\pi}&T^\ast\widetilde X.
}
\end{gathered}
\end{align}
where $d\pi$ is the map induced by $\pi$, $\pr_\pi$ is the first projection and $V=d\pi^{-1}(\widetilde X)$.
We have
\begin{align}
\label{eq:saito:20} 0^!_{\widetilde X}(\pi^*Z)&\overset{\eqref{eq:pullbackCCdef}}{=}0^!_{\widetilde X}d\pi_\ast{\rm pr}_\pi^!Z
\overset{\eqref{eq:refinedGysing02}}{=}d\pi_\ast{\rm pr}_\pi^!Z\cdot_{{\rm id}_{T^\ast\widetilde X}}\widetilde X\overset{(a)}{=}e_\ast({\rm pr}_\pi^!Z\cdot_{d\pi}\widetilde X)\\\nonumber&\overset{\eqref{eq:trivialontarget}}{=}e_\ast(({\rm pr}_\pi^!Z\cdot_{{\rm id}_{T^\ast X\times_X\widetilde X}}(T^\ast X\times_X\widetilde X))\cdot_{d\pi}\widetilde X)\\
\nonumber&\overset{(b)}{=}e_\ast({\rm pr}_\pi^!Z\cdot_{{\rm id}_{T^\ast X\times_X\widetilde X}}((T^\ast X\times_X\widetilde X)\cdot_{d\pi}\widetilde X))\\
\nonumber&\overset{\eqref{eq:refinedGysing01}}{=}e_\ast({\rm pr}_\pi^!Z\cdot_{{\rm id}_{T^\ast X\times_X\widetilde X}}d\pi^!(\widetilde X))
\end{align}
where (a) follows from the projection formula \cite[Example 8.1.7 ]{Fulton},
 (b) follows from the associativity property  \cite[Proposition 8.1.1(a)]{Fulton}.

Let $\widetilde{Y}$ be the exceptional divisor of $\pi\colon \widetilde{X}\to X$ with projection map $\tilde \pi\colon \widetilde{Y}\to Y$. Let $\tilde{i}\colon \widetilde{Y}\hookrightarrow \widetilde{X}$ be the closed immersion.
Since $\pi\colon \widetilde X\to X$ is a blow up, $\widetilde Y=\mathbb P((N_{Y/X})^\vee)$ is a projective space of relative dimension $c-1$ over $Y$ and we have exact sequences:
\begin{align}
\label{eq:saito:34}&0\to \Omega^1_{\widetilde Y/Y}\to \tilde\pi^\ast N_{Y/X}\otimes\mathcal O_{\widetilde Y}(-1)\to \mathcal O_{\widetilde Y}\to 0,\\
\label{eq:saito:35}&0\to \Omega^1_{\widetilde Y/Y}(1)\to \tilde\pi^\ast N_{Y/X}\to N_{\widetilde Y/\widetilde X}\to 0,\\
\label{eq:saito:36}&N_{\widetilde Y/\widetilde X}\simeq \mathcal O_{\widetilde Y}(1)\simeq \tilde i^\ast \mathcal O_{\widetilde X}(-\widetilde Y),\\
\label{eq:saito:37}&\Omega^1_{\widetilde X/X}\simeq \tilde i_\ast \Omega^1_{\widetilde Y/Y}.
\end{align}
For the proofs of \eqref{eq:saito:34}, \eqref{eq:saito:35} and \eqref{eq:saito:36}, we refer to \cite[B.6.3, B. 5.8]{Fulton}. Note that we use $N_{Y/X}$ to mean the conormal sheaf associated to the immersion $Y\to X$, which is different with \cite{Fulton}. In the following, we will also use $N_{Y/X}$ to mean the conormal bundle. Let $K$ be the kernel of $d\pi\colon T^\ast X\times_X\widetilde Y\to T^\ast \widetilde X\times_{\widetilde X}\widetilde Y$, i.e., $K=d\pi^{-1}(\widetilde Y)$. Then we have a commutative diagram with  exact rows and exact columns as vector bundles on $\widetilde Y$
\begin{align}\label{eq:saito:38}
\begin{gathered}
\xymatrix@C=1.5pc@R=1.5pc{
&&0\ar[d]&0\ar[d]\\
0\ar[r] &K\ar[r]^-{i_K}\ar@{=}[d]&N_{Y/X}\times_Y\widetilde Y\ar[d]\ar[r]&N_{\widetilde Y/\widetilde X}\ar[r]\ar[d]&0\\
0\ar[r]&K\ar[r]&T^\ast X\times_X\widetilde Y\ar[r]\ar[d]&T^\ast\widetilde X\times_{\widetilde X}\widetilde Y\ar[d]\ar[r]&T^\ast(\widetilde Y/Y)\ar[r]\ar@{=}[d]&0\\
&0\ar[r]&T^\ast Y\times_Y\widetilde Y\ar[r]^-{di}\ar[d]&T^\ast\widetilde Y\ar[r]\ar[d]& T^\ast(\widetilde Y/Y)\ar[r]&0\\
&&0&0
}
\end{gathered}
\end{align}
where $K$ is also equal to the kernel of $N_{Y/X}\times_Y\widetilde Y\to N_{\widetilde Y/\widetilde X}$ by the snake lemma.
Then we can write $V=d\pi^{-1}(\widetilde X)=\widetilde X\cup K$ (cf. \eqref{eq:saito:10}). By \eqref{eq:saito:35} and the first row of \eqref{eq:saito:38}, we have an isomorphism
\begin{align}
\label{eq:saito:40}K\simeq {\rm Spec}({\rm Sym}^\bullet_{\mathcal O_{\widetilde Y}}\Omega^\vee_{\widetilde Y/Y}(-1))
\end{align}
and $K$ is a vector bundle of rank $c-1$ over $\widetilde Y$. 

When $c=2$, we have $\dim K=\dim \widetilde X=\dim V=d$. Thus $d\pi^!(\widetilde X)\in CH_d(V)=Z_d(V)=Z_d(\widetilde X\cup K)=\mathbb Z\cdot \widetilde X+\mathbb Z\cdot K$.
Let $\Gamma\colon T^\ast X\times_X\widetilde X\to (T^\ast X\times_X\widetilde X)\times T^\ast\widetilde X$ be the graph of $d\pi$.
Since $T^\ast X\times_X\widetilde X$ and $ (T^\ast X\times_X\widetilde X)\times \widetilde X$ meet properly in $(T^\ast X\times_X\widetilde X)\times T^\ast\widetilde X$ at every irreducible component of $V=(T^\ast X\times_X\widetilde X)\cap ((T^\ast X\times_X\widetilde X)\times \widetilde X)$, we have (cf. \cite[Proposition 8.2(b)]{Fulton})
\begin{equation}\label{eq:saito:50}
d\pi^!(\widetilde X)=
[d\pi^{-1}(\widetilde X)]=[V]=a\cdot \widetilde X+b\cdot K\qquad a,b\in\mathbb Z.
\end{equation}
Since $V\setminus\widetilde Y$ is an open dense subscheme of $V$, we have $[V\setminus \widetilde Y]=a\cdot [\widetilde X\setminus \widetilde Y]+b\cdot [K\setminus \widetilde Y]$ (cf. \cite[Example 11.4.4 (i)]{Fulton}). Since $V\setminus \widetilde Y=(\widetilde X\setminus \widetilde Y)\coprod  (K\setminus \widetilde Y)$ is a disjoint union of smooth schemes, 
we get $a=b=1$.

Note that the cycle class $d\pi^!(\widetilde X)\cdot_{{\rm id}_{T^\ast X\times_X\widetilde X}}{\rm pr}_\pi^!Z$ is supported on $\widetilde X$, and $e_\ast$ is the identity on the support of this cycle class.
We have 
\begin{align}\label{eq:saito:70}
\pi_{\ast}0^!_{\widetilde X}(\pi^*Z)
&\overset{\eqref{eq:saito:20}}{=}\pi_\ast e_\ast({\rm pr}_\pi^!Z\cdot_{{\rm id}_{T^\ast X\times_X\widetilde X}}d\pi^!(\widetilde X))\\
\label{eq:saito:71} &\overset{\eqref{eq:saito:50}}{=}\pi_\ast ({\rm pr}_\pi^!Z\cdot_{{\rm id}_{T^\ast X\times_X\widetilde X}}\widetilde X)+\pi_\ast ({\rm pr}_\pi^!Z\cdot_{{\rm id}_{T^\ast X\times_X\widetilde X}}K)
\end{align}
Now, we  calculate the first term of \eqref{eq:saito:71}. Consider the following commutative diagram 
\begin{align}\label{eq:saito:80}
\begin{gathered}
\xymatrix{
X\ar[d]_-{0_X}\ar@{}|\Box[rd]&\widetilde{X}\ar[l]_-{\pi}\ar[d]^-{0_{X}^\prime}\\
T^\ast X&T^\ast X\times_X\widetilde X\ar[l]_-{~{\rm pr}_\pi~}
}
\end{gathered}
\end{align}
where $0^\prime_X\colon \widetilde X\to T^\ast X\times_X\widetilde X$ is the zero section. Since $0_X$ and $0^\prime_X$ are regular immersions of same codimension, by  the commutativity of refined Gysin homomorphisms \cite[Theorem 6.4, Theorem 6.2 (c)]{Fulton}, we get
\begin{equation}\label{eq:saito:81}
\pi_\ast 0_{X}^{\prime!}({\rm pr}_{\pi}^! Z)\simeq \pi_\ast\pi ^\ast 0_{ X}^!(Z)\simeq  0^!_X(Z),
\end{equation}
where  the last equality   follows from \cite[Proposition 6.7 (b)]{Fulton} ($\pi$ is a blow up).
Thus we have
\begin{align}\label{eq:saito:90}\pi_\ast ({\rm pr}_\pi^!Z\cdot_{{\rm id}_{T^\ast X\times_X\widetilde X}}\widetilde X)
\overset{\eqref{eq:refinedGysing02}}{=}\pi_\ast 0_{X}^{\prime!}({\rm pr}_{\pi}^! Z)\overset{\eqref{eq:saito:81}}{=}0^!_X(Z).
\end{align}

Now we  calculate the second term of \eqref{eq:saito:71}. Consider the following commutative diagram

\begin{align}\label{eq:saito:91}
\begin{gathered}
\xymatrix{
&&&T^\ast X\times_X\widetilde X\ar[r]^{{\rm pr}_{\pi}}&T^\ast X\\
\widetilde Y\ar@{=}[rrd]\ar@{^(->}[r]&K\ar[r]^-{i_K}\ar[rd]^-q&
N_{Y/X}\times_Y\widetilde Y\ar@{}|{\blacksquare}[rd]\ar[d]^-{p}\ar[r]^-{i_N}&T^\ast X\times_X\widetilde Y\ar@{}|\Box[rd]\ar[d]^-{di^\prime}\ar[r]^-{{\rm pr}^\prime_{\tilde \pi}}\ar[u]^-{{\rm pr}_i^\prime}\ar@{}|\Box[ur]& T^\ast X\times_XY\ar[d]^-{di}\ar[u]_-{{\rm pr}_i}\\
&&\tilde Y\ar[d]_-{\tilde \pi}\ar[r]^-{0^\prime_Y}&T^\ast Y\times_Y\widetilde Y\ar[r]^-{{\rm pr}_{\tilde \pi}}\ar[d]^-{\pr_{\tilde\pi}}&T^\ast Y\\
&&Y\ar[r]^-{0_Y}\ar@{}|\Box[ur]&T^\ast Y\ar@{=}[ur]
}
\end{gathered}
\end{align}
where the  square indicated by ``$\blacksquare$'' is Cartesian by the second column of \eqref{eq:saito:38},  $0^\ast_\bullet$ means the zero section of the respectively vector bundles, 
the morphisms indicated by $\pr_\bullet^\ast$ are the first projections, $i_K$ is defined in \eqref{eq:saito:38}, $di$ is induced by $i$,
all other maps are canonical morphisms.
Note that $i_K\colon K\to N_{Y/X}\times_Y\widetilde Y$ is of codimension $1$ and the associated conormal sheaf equals to $q^\ast\mathcal O_{\widetilde Y}(-1)$ by \eqref{eq:saito:35} and \eqref{eq:saito:36}. We have
\begin{align}\nonumber
&\pi_\ast ({\rm pr}_\pi^!Z\cdot_{{\rm id}_{T^\ast X\times_X\widetilde X}}K)
\overset{\eqref{eq:refinedGysing02}}{=}(i_\ast \tilde\pi_\ast)( i_K^!i_N^!\pr^{\prime!}_{i}\pr_{\pi}^!Z)=i_\ast\tilde\pi_\ast i_K^!i_N^!\pr^{\prime!}_{\tilde \pi}\pr_i^!Z\\
\nonumber&\overset{(1)}{=}i_\ast \tilde\pi_\ast q_\ast i_K^!i_N^!\pr^{\prime!}_{\tilde \pi}\pr_i^!Z=i_\ast\tilde\pi_\ast p_\ast i_{K\ast}i_K^! i_N^!\pr^{\prime!}_{\tilde \pi}\pr_i^!Z\\
\label{eq:saito:101} &\overset{(2)}{=}i_\ast \tilde\pi_\ast p_\ast(c_1(p^{\ast}\mathcal O_{\widetilde Y}(1))\cap  i_N^!\pr^{\prime!}_{\tilde \pi}\pr_i^!Z)\overset{(3)}{=}i_\ast\tilde\pi_\ast(c_1(\mathcal O_{\widetilde Y}(1))\cap p_\ast (i_N^!\pr^{\prime!}_{\tilde \pi}\pr_i^!Z))\\
\nonumber&\overset{(4)}{=}i_\ast\tilde\pi_\ast(c_1(\mathcal O_{\widetilde Y}(1))\cap 0^{\prime!}_Ydi^\prime_\ast(\pr^{\prime!}_{\tilde \pi}\pr_i^!Z))\overset{(5)}{=}i_\ast\tilde\pi_\ast(c_1(\mathcal O_{\widetilde Y}(1))\cap 0^{\prime!}_Y \pr_{\tilde\pi}^!(di_\ast\pr_i^!Z))\\
\nonumber&\overset{(6)}{=}i_\ast\tilde\pi_\ast(c_1(\mathcal O_{\widetilde Y}(1))\cap \tilde \pi^\ast 0^{!}_Y (di_\ast\pr_i^!Z))\overset{(7)}{=}i_\ast 0^{!}_Y (di_\ast\pr_i^!Z)\overset{\eqref{eq:pullbackCCdef}}{=}i_\ast 0^{!}_Y (i^\ast Z)
\end{align}
where
\begin{itemize}
\item the step (1) follows from the fact that $i_K^!i_N^!\pr^{\prime!}_{\tilde \pi}\pr_i^!Z$ is supported on the zero section $\widetilde Y\subseteq K$, and $q_\ast$ is an identity map on the support of $i_K^!i_N^!\pr^{\prime!}_{\tilde \pi}\pr_i^!Z$.
\item the step (2) follows from the definition of Gysin map for divisors \cite[Proposition 2.6 (b)]{Fulton}.
\item the step (3) follows from the projection formula \cite[Theorem 3.2 (c)]{Fulton}.
\item the step (4), (5) and (6) follow from the push-forward formula and the compatibility property  of Gysin maps \cite[Theorem 6.2  (a) and (c)]{Fulton}.
\item the step (7) follows from \cite[Proposition 3.1 (a) (ii)]{Fulton} since $\widetilde Y\to Y$ is a $\mathbb P^1$-bundle over $Y$ (when $c=2$).
\end{itemize}
Finally, \eqref{eq:blowupNEW} follows from \eqref{eq:saito:70}, \eqref{eq:saito:90} and \eqref{eq:saito:101}. We finish the proof.
\end{proof}

\subsection{}Now, we turn to the fibration formula for Gysin maps. We first introduce a definition.
\begin{definition}\label{def:gp}
Let $X$ be a smooth scheme purely of dimension $d$ over a field $k$ and let $Y$ be a smooth  connected curve over $k$.
Let $C\subseteq T^*X$ be a conical closed subset purely of dimension $d$.
We say that $f\colon X\to Y$ is a good fibration  with respect to $C$ if it satisfies the following properties:
\begin{enumerate}
\item There exist finitely many closed points $u_1,\ldots, u_m$ of $X$ such that $f$ is 
$C$-transversal on $X\setminus\{u_1,\ldots, u_m\}$. 
We call such closed points $u_1,\ldots, u_m$ isolated characteristic points of $f$ with respect to $C$.
\item If $u_i\neq u_j$, then $f(u_i)\neq f(u_j)$.
\item The point $u_i$ is purely inseparable over $f(u_i)$ for  $i=1,\cdots,m$.
\end{enumerate}

If $f$ only satisfies the conditions {\rm (1)} and {\rm (2)}, then we say $f$ is a pre-good fibration with respect to $C$.
\end{definition}

If $f\colon X\rightarrow Y$ is a pre-good fibration with respect to $C$ and if $k$ is perfect, then
by Lemma \ref{lem:Xcurvepropertrans} below, there exists a finite set  $\{v_1,\ldots,v_n\}$ of closed points of $Y$ such that for every closed point $v$ of $Y\setminus\{f(u_1),\ldots,f(u_m), v_1\ldots,v_n\}$,   $X_{v}$ is smooth and  the closed immersion $X_{v} \to X$  is properly $C$-transversal.
\begin{lemma}\label{lem:Xcurvepropertrans}
Let $X$ be a smooth scheme purely of dimension $d$ over a perfect  field $k$ and let $Y$ be a smooth  connected curve over $k$.
Let $C\subseteq T^*X$ be a conical closed subset purely of dimension $d$.
Let $f\colon X\to Y$ be a smooth and $C$-transversal morphism. Then we have
\begin{enumerate}
\item For any closed point $v\in Y$, $X_v\to X$ is $C$-transversal.
\item There exists an open dense  subscheme $V\subseteq Y$ such that 
 $X_v\to X$ is properly $C$-transversal for any $v\in V$.

\end{enumerate}
\end{lemma}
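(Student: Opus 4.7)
The plan is to derive both parts from local properties of the smooth morphism $f\colon X\to Y$ combined with the $C$-transversality hypothesis on $f$. I would handle (1) by an exact identification of the conormal sheaf and (2) by a standard fiber-dimension argument.

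For (1), the statement is pointwise. Since $Y$ is a smooth curve and $f$ is smooth, for each closed point $v\in Y$ with local uniformizer $t$, the fiber $X_v$ is cut out Zariski-locally in $X$ by $f^*t$. Thus the conormal sheaf $N_{X_v/X}$ is a line bundle on $X_v$ generated locally by $f^*(dt)$, and, viewed as a line subbundle of $T^*X|_{X_v}$, it coincides with the image of the injection $df\colon f^*T^*Y|_{X_v}\hookrightarrow T^*X|_{X_v}$. Unwinding the definition of $C$-transversality for the closed immersion $i\colon X_v\hookrightarrow X$ at $x\in X_v$ reduces to the inclusion $C_x\cap (N_{X_v/X})_x\subseteq T^*_X X$, while the hypothesis that $f$ is $C$-transversal at $x$ reads as $C_x\cap df(T^*Y\times_Y X)_x\subseteq T^*_X X$. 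Combining these statements via the identification above delivers (1).

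For (2), I would exploit the identification $C\times_X X_v = C\times_Y v$ as scheme-theoretic fibers over $v$ of the composition $g\colon C\hookrightarrow T^*X\to X\xrightarrow{f} Y$. Writing $C=C_1\cup\cdots\cup C_r$ as the union of its irreducible components, each of dimension $d$, the restriction $g_i\colon C_i\to Y$ is either dominant or has image contained in a single closed point $y_i\in Y$, since $Y$ is a smooth curve. In the dominant case, Chevalley's theorem on fiber dimension yields a dense open $V_i\subseteq Y$ such that every fiber of $g_i$ over $V_i$ is of pure dimension $d-1$; in the non-dominant case, take $V_i=Y\setminus\{y_i\}$, so that $C_i\times_Y v$ is empty for $v\in V_i$. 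Setting $V=\bigcap_i V_i$, for every $v\in V$ the fiber $C\times_Y v$ is a union of schemes each of pure dimension $d-1=\dim X_v$, which combined with (1) yields proper $C$-transversality of $X_v\to X$.

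The main technical input is the conormal identification $N_{X_v/X}=df(f^*T^*Y)|_{X_v}$ inside $T^*X|_{X_v}$ used in (1); beyond that the argument is a routine dimension count. I do not anticipate any serious obstacle, though one should be careful in (2) to treat dominant and non-dominant components of $C$ uniformly when assembling the open subscheme $V$.
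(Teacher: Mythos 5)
Your proof is correct. For part (1) you give a direct, hands-on verification: you identify the conormal bundle $N_{X_v/X}$ with $df\bigl(f^*T^*Y|_{X_v}\bigr)$ inside $T^*X|_{X_v}$ (using that $f$ is smooth to a curve, so $X_v$ is cut out locally by $f^*t$ for a uniformizer $t$ at $v$, and $\ker(di)=N_{X_v/X}$), and then observe that the transversality condition for the immersion $i\colon X_v\hookrightarrow X$ and that for $f$ along $X_v$ become the same statement. The paper instead simply applies the general stability result \cite[Lemma 3.9.2]{Saito} to the Cartesian square with vertices $X$, $X_v$, $Y$, $v$, after noting that $v$ is smooth over the perfect field $k$. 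Your route is more elementary and makes the mechanism transparent; the paper's is shorter once one has the cited lemma at hand. Both implicitly use perfectness of $k$ to guarantee that $X_v$ is smooth over $k$, so that $C$-transversality of $X_v\to X$ is even defined -- worth flagging explicitly. For part (2) the two arguments are essentially the same idea phrased differently: the paper shrinks $Y$ via generic flatness of each $C_a\to Y$ and appeals to \cite[Corollary 9.6]{Hartshorne} for constancy of fiber dimension, while you invoke semicontinuity/Chevalley and treat non-dominant components separately. One small point of care in your phrasing of (2): upper semicontinuity alone gives $\dim(C_i\times_Yv)\le d-1$ over a dense open; for purity in dimension exactly $d-1$ you also need the lower bound from dominance (every component of a fiber of a dominant finite-type morphism of integral schemes has dimension $\ge\dim C_i-\dim Y$), or you can cite generic flatness directly as the paper does, which packages both bounds at once.
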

\begin{proof}
(1)Let $v\in Y$ be a closed point. Since $k$ is a perfect field, the closed point $v$ is a smooth scheme over $k$.
We apply \cite[Lemma 3.9.2]{Saito} to the following Cartesian diagram 
\begin{equation}
\begin{gathered}
\xymatrix{
X\ar[d]&X_{v}\ar[d]\ar[l]\\
Y&\ar[l]v
}
\end{gathered}
\end{equation}
then the  immersion $X_{v}\to X$ is $C$-transversal.

(2)
We consider the reduced scheme structure on every irreducible component $C_a$ of $C$.
By generic flatness, there exists an open dense subset $V\subseteq Y$ such that  $C_a\times_YV\to V$ is flat. Then $C_a\times_YV\to V$ is purely of relative dimension $d-1$ (cf. \cite[Corollary 9.6]{Hartshorne}). Since $C_a\times_Yv=C_a\times_XX_v$ is purely of dimension $d-1$ for any  irreducible component $C_a$ of $C$, then $X_v\to X$ is properly $C$-transversal for any $v\in V$. 
\end{proof}
\subsection{}
Let $X$ be a smooth scheme purely of dimension $d$ over any field $k$ and $C\subseteq T^\ast X$  a closed conical subset purely of dimension $d$.
Let $f\colon X\to Y$ be a morphism to a smooth connected curve $Y$ over $k$. 
Let $u\in X$ be  at most an isolated characteristic point of $f$ with respect to $C$ and $\omega$ a basis of $T^\ast Y$ on a neighborhood of $f(u)\in Y$. Then $\omega$ defines a section $Y\to T^\ast Y$ on a neighborhood of $f(u)$. By base change, it defines a section $X\to T^\ast Y\times_Y X$. Let $f^\ast \omega$ be the composition $X\to T^\ast Y\times_Y X\xrightarrow{df}T^\ast X$, which is well-defined on an open neighborhood of $u$.
The intersection of an irreducible component $C_a$ of $C$ and $f^*\omega$ consists of at most one isolated point $f^\ast\omega(u)\in T^*_uX$ 
in the fiber of $u$. Hence  the intersection numbers  $(C_a, f^*\omega)_{T^*X, u}$ are well-defined and they are independent of the choice of the basis $\omega$ (cf. \cite[Section 5.2]{Saito}).
\begin{lemma}\label{lem:fibration100}
Let $X$ be a smooth scheme over a field $k$ and $g\colon L\to E$ be a morphism of vector bundles on $X$ such that $L$ is a line bundle and $E$ is a vector bundle of rank $d$. Let $s_1\colon X\to E$ and $s_2\colon X\to L$ be the zero sections. 
Let
\begin{equation}\label{lem:eq:c1}
c_1(L)=\sum_{V\in J}{\rm ord}_{V}\cdot [V]
\end{equation}
be the associated divisor of the first Chern class (cf. \cite[2.5]{Fulton}), where $J$ is a finite set of integral sub-schemes of $X$ of codimension 1, and ${\rm ord}_V\in\mathbb Z$ is the coefficient of the component $[V]$.
Let $C\subseteq E$ be a conical closed subset purely of dimension $d$. Assume that $g^{-1}(C)$ is contained  in
$s_2(X)\cup\bigcup_{u\in I}L_{u}$
for a finite set $I$ of closed points of $X\setminus \left\{\bigcup_{V\in J}V\right\}$ and assume that $V$ is smooth for all $V\in J$. We choose a basis $\omega$ of $L$ at each point $u\in I$.
For each $V\in J$, we have a commutative diagram
\begin{align}\label{eq:lem:eq:c2}
\begin{gathered}
\xymatrix{
L\times_XV\ar[r]\ar[d]\ar@{}|\Box[rd]&L\ar[d]^g&X\ar@{_(->}[l]_{s_2}\ar@{=}[d]\\
E\times_XV\ar[r]\ar[d]\ar@{}|\Box[rd]&E\ar[d]&X\ar@{_(->}[l]_{s_1}\\
V\ar[r]^-{i_V}&X
}
\end{gathered}
\end{align}
where $i_V\colon V\to X$ is the closed immersion.
Then for any $d$-cycle $Z$ supported on $C$, we have the following equality
\begin{equation}\label{lem:eq:fib}
s_1^!(Z)=\sum_{u\in I}(Z,g(\omega))_{E,u}\cdot [u]+\sum_{V\in J}{\rm ord}_V\cdot g^!(i_V^!(Z)) \quad {\rm in}~ CH_0(X),
\end{equation}
where $i_V^!$ and $g^!$ are the refined Gysin maps $($cf. \cite[6.2]{Fulton}$)$, and $g(\omega)$ is the composition $X\xrightarrow{\omega}L\xrightarrow{g}E$ which is well-defined on an open neighborhood of $I$.
\end{lemma}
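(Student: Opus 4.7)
The plan is to compute $s_1^!(Z)$ by factoring the zero section as $s_1=g\circ s_2$ and analyzing an auxiliary cycle on $L$. I will first introduce the refined Gysin homomorphism $g^!\colon CH_d(E)\to CH_1(L)$ defined via the graph embedding $\Gamma_g\colon L\hookrightarrow L\times_XE$, which is a regular embedding of codimension $d$, composed with flat pullback along the projection $L\times_XE\to E$. Since $s_1=g\circ s_2$ and both $s_2$ and $g$ are lci morphisms between smooth schemes, the functoriality of refined Gysin maps (cf.\ \cite[Theorem 6.5]{Fulton}) yields $s_1^!=s_2^!\circ g^!$.

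Using the support hypothesis $g^{-1}(C)\subseteq s_2(X)\cup\bigcup_{u\in I}L_u$ and the disjointness $I\cap V=\emptyset$ for every $V\in J$, the cycle $g^!(Z)\in CH_1(L)$ admits a unique decomposition
\[
g^!(Z) \;=\; s_{2\ast}(\beta) \;+\; \sum_{u\in I} m_u\cdot[L_u]
\]
for some $\beta\in CH_1(X)$ and integers $m_u$. Applying $s_2^!$ and using the self-intersection formula $s_2^!\circ s_{2\ast}=c_1(L)\cap(\cdot)$ (see \cite[Corollary 6.3]{Fulton}) together with the transversal computation $s_2^!([L_u])=[u]$, I obtain
\[
s_1^!(Z)\;=\;c_1(L)\cap\beta \;+\; \sum_{u\in I}m_u\cdot[u]\qquad\text{in } CH_0(X).
\]

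Writing $c_1(L)=\sum_V\mathrm{ord}_V\cdot[V]$, the next step is to identify $c_1(L)\cap\beta$ with $\sum_V\mathrm{ord}_V\cdot g^!(i_V^!(Z))$. I will apply $i_V^!$ to the above decomposition of $g^!(Z)$: the hypothesis $I\cap V=\emptyset$ forces $i_V^!([L_u])=0$ for all $u$, and the commutativity $i_V^!\circ g^!=g^!\circ i_V^!$ for the Cartesian squares in (\ref{eq:lem:eq:c2}) (cf.\ \cite[Theorem 6.4]{Fulton}) yields $s_{2\ast}(i_V^!(\beta))=g^!(i_V^!(Z))$. After pushforward to $X$ and summation with multiplicities, the desired identification follows.

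Finally, the coefficient $m_u$ is computed by a local analysis near $u\in I$: on a neighborhood $U$ of $u$ where $\omega$ trivializes $L$, the graph $\Gamma_g$ is identified with the graph of the section $g(\omega)\colon U\to E|_U$, so the refined Gysin construction evaluates $m_u$ as the local intersection multiplicity of $Z$ with the graph of $g(\omega)$ at $u$, which is $(Z,g(\omega))_{E,u}$ by definition. The main technical obstacle will be the careful verification of the functoriality $s_1^!=s_2^!\circ g^!$ for the non-regular morphism $g$ together with the commutativity of refined Gysin maps through the squares of (\ref{eq:lem:eq:c2}); both reduce to the compatibility statements in \cite[Chapter 6]{Fulton}.
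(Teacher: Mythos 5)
Your plan is essentially the paper's proof: decompose $g^!(Z)$ (supported in $s_2(X)\cup\bigcup_{u\in I}L_u$) into a piece on the zero section plus multiples $m_u[L_u]$, apply $s_2^!$ using the self-intersection formula $s_2^!s_{2\ast}=c_1(L)\cap(\cdot)$ and $s_2^!([L_u])=[u]$, and then convert the $c_1(L)$-term into $\sum_{V\in J}\mathrm{ord}_V\cdot g^!(i_V^!(Z))$ by applying $i_V^!$ to the decomposition, using $i_V^!([L_u])=0$ (as $u\notin V$) together with the commutativity of refined Gysin maps. The paper phrases the divisor step with a class $A\in CH_1(s_2(X))$ and Fulton's Proposition 2.6(c) rather than pushing your $\beta$ down to $X$, and it likewise uses $s_1^!=s_2^!\circ g^!$ without further comment; uniqueness of the decomposition is never needed. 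Up to these cosmetic points your argument and the paper's coincide.

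The one step whose justification does not work as written is the computation of $m_u$. Under the trivialization of $L$ near $u$ given by $\omega$, the graph $\Gamma_g\subseteq L\times_XE$ is \emph{not} identified with the graph of the section $g(\omega)\colon X\to E$: it is the one-parameter family $\{(x,t,\,t\cdot g(\omega)(x))\}$, i.e.\ the $\mathbb{G}_m$-scaled cone over that graph, which has one more dimension. So "the refined Gysin construction evaluates $m_u$ as the local intersection multiplicity of $Z$ with the graph of $g(\omega)$" is an assertion, not a computation, and the stated identification is false. The correct (and short) argument, which is exactly what the paper does, is to pair the $1$-cycle class $g^!(Z)$ with the section $\omega$ at the point $\omega(u)$: since $\omega(u)$ lies neither on the zero section nor on $L_{u'}$ for $u'\neq u$, only the term $m_u[L_u]$ contributes, and $([L_u],\omega)_{L,u}=1$, so $m_u=(g^!(Z),\omega)_{L,u}$; the projection formula (Fulton, Example 8.1.7) then converts this into $(Z,g(\omega))_{E,u}$, using that $g$ carries the section $\omega$ of $L$ to the section $g(\omega)$ of $E$. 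With this repair (and replacing the citation of Fulton's Theorem 6.5, which concerns composites of regular embeddings, by the l.c.i.\ functoriality in Fulton, Section 6.6, for $s_1^!=s_2^!\circ g^!$), your proof becomes the paper's.
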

Note that, since we use refined Gysin maps, by the assumption on $g^{-1}(C)$, the support of $g^!(i_V^!(Z))$ is contained in the zero section $V\subseteq L\times_XV$.
\begin{proof}By the assumption, we may write the Gysin pullback $g^!(Z)$ in the following form
\begin{equation}\label{eq:fibration100:101}
g^!(Z)=A+\sum_{u\in I} b_u\cdot [L_u]
\end{equation}
where $A\in CH_1(s_2(X))$ and $b_u\in\mathbb Z$ for $u\in I$. 
From (\ref{eq:fibration100:101}), we can calculate $b_u$ in the following way
\begin{equation}\label{eq:fibration100:101a}
b_u=(g^!(Z),\omega)_{L,u}=(Z, g(\omega))_{E,u}
\end{equation}
by the projection formula \cite[Example 8.1.7]{Fulton}.
By (\ref{eq:fibration100:101}) and (\ref{eq:fibration100:101a}), we have
\begin{equation}\label{eq:fibration100:102}
s_1^{!}(Z)=s_2^!(g^!(Z))=s_2^{!}(A)+\sum_{u\in I} (Z, g(\omega))_{E,u}\cdot [u].
\end{equation}
Now we only need to calculate $s_2^{!}(A)$.
Since $A$ is supported on the zero section $X\subseteq L$, by the self-intersection formula \cite[Proposition 2.6 (c)]{Fulton}, we have $s_2^{!}(A)=c_1(L)\cap A$.
By \eqref{eq:lem:eq:c2}, we have
\begin{align}\label{eq:fibration100:104}
s_2^{!}(A)&=c_1(L)\cap A\overset{\eqref{lem:eq:c1}}{=}\sum_{V\in J}{\rm ord}_V\cdot (V\cdot_{{\rm id}_L} A)\\
\nonumber&\overset{\eqref{eq:refinedGysing01}}{=}\sum_{V\in J}{\rm ord}_V\cdot i_V^!A\overset{\eqref{eq:fibration100:101}}{=}\sum_{V\in J}{\rm ord}_V\cdot i_V^! g^!(Z)\\
\nonumber&=\sum_{V\in J}{\rm ord}_V\cdot g^!(i_V^!(Z))
\end{align}
where the last step follows from the commutativity property of the refined Gysin homomorphisms (cf. \cite[Theorem 6.4]{Fulton}).
By (\ref{eq:fibration100:102}) and (\ref{eq:fibration100:104}),
we get (\ref{lem:eq:fib}). This finishes the proof.
\end{proof}
\begin{lemma}\label{cc}
Let $X$ be a smooth scheme purely of dimension $d$ over a field $k$,
 let $C\subseteq T^*X$ be a conical closed subset purely of dimension $d$ and  $Z$  a $d$-cycle supported on $C$.
Let $f\colon X\to Y$ be a pre-good fibration to a smooth  connected curve $Y$ over $k$ with respect to $C$ and  $u_1,\ldots, u_m\in X$  the isolated characteristic points of $f$ with respect to $C$.
Let $\omega$ be a non-zero rational 1-form on $Y$ which has neither poles nor zeros at $f(u_1),\ldots, f(u_m)$ such that for all closed point $v\in Y$ with ${\rm ord}_v(\omega)\neq 0$, $X_v$ is smooth and $i_v\colon X_v\rightarrow X$ is $C$-transversal.

Then we have an equality in $CH_0(X)$:
\begin{align}\label{fiber1}
0_X^!(Z)=\sum_{i=1}^m(Z, f^*\omega)_{T^*X, u_i}\cdot[u_i]+\sum_{v\in |Y|}\ord_v(\omega)\cdot i_{v\ast}(0^!_{X_v}(i_v^\ast Z)).
\end{align}
where  $i_{v\ast}\colon CH_0(X_v)\rightarrow CH_0(X)$ is the canonical homomorphism. For the definition of the Gysin map $0^!_{\bullet}$, see Subsection \ref{sub:gysinzero}.
\end{lemma}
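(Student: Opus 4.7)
My plan is to deduce Lemma \ref{cc} from Lemma \ref{lem:fibration100} by specializing its setup. I would take $E = T^\ast X$, $L = f^\ast T^\ast Y$ (a line bundle on $X$), $g = df\colon f^\ast T^\ast Y \to T^\ast X$, $s_1 = 0_X$, and $s_2$ the zero section of $L$, with the rational section of $L$ given by $f^\ast\omega$. Its associated divisor is
\begin{equation*}
c_1(L) = \sum_{v\in\abs{Y}} \ord_v(\omega) \cdot [X_v],
\end{equation*}
so the indexing sets in Lemma \ref{lem:fibration100} become $J = \{X_v : \ord_v(\omega)\neq 0\}$ and $I = \{u_1, \ldots, u_m\}$. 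The three hypotheses of Lemma \ref{lem:fibration100} are then straightforward to verify: the inclusion $g^{-1}(C) \subseteq s_2(X) \cup \bigcup_{i=1}^m L_{u_i}$ is exactly the condition from Definition \ref{def:gp} that $f$ is $C$-transversal on $X \setminus \{u_1, \ldots, u_m\}$; the assumption that $\omega$ has neither pole nor zero at any $f(u_i)$ forces $u_i \notin V$ for every $V \in J$; and each $V \in J$ is smooth by hypothesis.

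Applying Lemma \ref{lem:fibration100}, and noting $g(\omega) = df(f^\ast\omega) = f^\ast\omega$ on a neighborhood of each $u_i$, then yields
\begin{equation*}
0_X^!(Z) = \sum_{i=1}^m (Z, f^\ast\omega)_{T^\ast X, u_i}\cdot[u_i] + \sum_{v\in\abs{Y}} \ord_v(\omega)\cdot g^!(i_v^!(Z)).
\end{equation*}
The first sum already matches the target formula \eqref{fiber1}, so what remains is the identification
\begin{equation*}
g^!(i_v^!(Z)) = i_{v\ast}\bigl(0^!_{X_v}(i_v^\ast Z)\bigr) \quad \text{in } CH_0(X)
\end{equation*}
for every $v \in \abs{Y}$ with $\ord_v(\omega) \neq 0$.

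I expect this last identification to be the main obstacle. The key geometric input is the conormal short exact sequence of vector bundles on the smooth scheme $X_v$,
\begin{equation*}
0 \to L|_{X_v} \xrightarrow{g|_{X_v}} T^\ast X|_{X_v} \xrightarrow{di_v} T^\ast X_v \to 0,
\end{equation*}
which realizes $L|_{X_v}$ as the conormal bundle $\ker(di_v)$. The $C$-transversality of $i_v$ (granted by hypothesis for such $v$) ensures that $i_v^!(Z)$, viewed as a cycle on $T^\ast X|_{X_v}$ supported on $C \times_X X_v$, meets $L|_{X_v}$ only along the zero section, so both sides are well-defined $0$-cycles on $X_v$. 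I would establish the desired equality using the Whitney multiplicativity $c_d(T^\ast X|_{X_v}) = c_1(L|_{X_v})\cdot c_{d-1}(T^\ast X_v)$ for the conormal sequence, combined with the self-intersection formula and the functoriality of refined Gysin maps recalled in \cite{Fulton}; the definition $i_v^\ast Z = (di_v)_\ast(\pr_{i_v}^!Z)$ from \eqref{eq:pullbackCCdef} then concludes the argument.
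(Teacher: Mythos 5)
Your plan is identical to the paper's for the first half of the argument: you apply Lemma \ref{lem:fibration100} with exactly the same data ($L=T^\ast Y\times_Y X$, $E=T^\ast X$, $g=df$, $J$, $I$), verify the hypotheses the same way, and arrive at the same intermediate identity. You also correctly isolate the remaining task --- proving $g^!(i_v^!(Z))=i_{v\ast}\bigl(0^!_{X_v}(i_v^\ast Z)\bigr)$ for each $v$ with $\ord_v(\omega)\neq 0$ --- and correctly identify the conormal sequence $0\to L|_{X_v}\to T^\ast X|_{X_v}\to T^\ast X_v\to 0$ and the $C$-transversality of $i_v$ as the geometric inputs.

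Where the proposal diverges, and where there is a genuine gap, is in the proposed mechanism for the final identification. Whitney multiplicativity $c_d(T^\ast X|_{X_v})=c_1(L|_{X_v})\cdot c_{d-1}(T^\ast X_v)$ together with the self-intersection formula computes intersections of the zero section with itself (or with pullbacks from the base), but $Z$ is an arbitrary $d$-cycle supported on $C$, and $g^!$, $0^!_{X_v}$ are refined Gysin pullbacks applied to this cycle, not cap products with Chern classes. The paper's actual argument observes that the conormal sequence exhibits $L|_{X_v}=\ker(di_v)$ as the fiber product $X_v\times_{T^\ast X_v}(T^\ast X\times_X X_v)$ --- i.e., a Cartesian square with $g|_{X_v}=df$ on one side and the zero section $0_{X_v}\colon X_v\to T^\ast X_v$ on the other, both regular immersions of codimension $d-1$ --- and then invokes the push-forward and compatibility properties of refined Gysin homomorphisms (Fulton, Theorem 6.2(a) and (c)) to get $\pr_{2\ast}\,df^!\,i_v^!(Z)=0_{X_v}^!\,di_{v\ast}\,i_v^!(Z)=0_{X_v}^!(i_v^\ast Z)$, with all push-forwards legitimate because the relevant supports lie in the zero section by $C$-transversality. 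So your setup and geometric picture are right, but you should replace the Whitney/self-intersection step by the Cartesian-square compatibility of refined Gysin maps; as written, the chain of implications from Whitney to the desired 0-cycle identity does not go through.
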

\begin{proof}
We apply Lemma \ref{lem:fibration100} to  $L=T^\ast Y\times_YX$,  $E=T^\ast X$, $J=\left\{X_v\mid v\in|Y| {\rm ~such~that~}{\rm ord}_v(\omega)\neq 0\right\}$ and $g=df\colon T^\ast Y\times_YX\to T^\ast X$, where $df$ is the morphism induced by $f$. We obtain the following equality in $CH_0(X)$
\begin{align}\label{eq:cc:100}
0_X^!(Z)=\sum_{i=1}^m(Z, f^*\omega)_{T^*X, u_i}\cdot[u_i]+\sum_{v\in |Y|}\ord_v(\omega)\cdot df^! \pr_{i_v}^!(Z)
\end{align}
where $\pr_{i_v}\colon T^\ast X\times_XX_v\to T^\ast X$ is the first projection (base change of the map $X_v\to X$).
Let $v\in |Y|$ with ${\rm ord}_v(\omega)\neq 0$, then $X_v$ is smooth and $T^\ast Y\times_YX_v\simeq T^\ast_{X_v}X$. Moreover, we have a commutative diagram 
\begin{equation}\label{eq:cc:105}
\begin{gathered}
\xymatrix{
T^\ast X&T^\ast X\times_XX_v\ar[d]_-{di_v}\ar[l]_-{{\rm pr}_{i_v}}&T^\ast Y\times_YX_v\simeq T^\ast_{X_v}X\ar[d]_-{\rm pr_2}\ar[l]_-{df}\ar@{}[ld]|-{\Box}&X_v\ar@{_(->}[l]\ar@{=}[ld]\\
&T^\ast X_v&X_v\ar[l]^-{0_{X_v}}
}
\end{gathered}
\end{equation}
where $\pr_2$ is the projection.
Since $X_v\rightarrow X$ is $C$-transversal, $(C\times_XX_v)\cap {\rm ker}(di_v)=(C\times_XX_v)\cap T^\ast_{X_v}X$ is contained in the zero section $X_v$ of $T^\ast X\times_XX_v$. We have
\begin{align}
\label{eq:cc:106}
df^!{\rm pr}^!_{i_v}(Z)&\overset{(a)}{=}{\rm pr}_{2\ast} df^!{\rm pr}^!_{i_v}(Z)\overset{(b)}{=}0_{X_v}^!di_{v\ast}{\rm pr}_{i_v}^!(Z)
\overset{\eqref{eq:pullbackCCdef}}{=}0_{X_v}^!(i_v^\ast Z),
\end{align}
where in (a) we use the fact that $df^!{\rm pr}^!_{i_v}(Z)$ is supported on the zero section $X_v\subseteq T^\ast_{X_v}X$ and $\pr_2$ is the identity map on $X_v$, 
and (b) follows from the push-forward formula \cite[Theorem 6.2 (a) and (c)]{Fulton} and the fact that $df$ and $0_{X_v}$ are regular immersions of same codimension.
By (\ref{eq:cc:100}) and (\ref{eq:cc:106}), we get (\ref{fiber1}). This finishes the proof.
\end{proof}

\section{Existence of good pencil}\label{sec:eogp}
\subsection{}The existence of a pre-good pencil is proved by Saito and Yatagawa \cite{Saito_Yatagawa}. 
We first recall their result and we follow the same notation as in  \cite[Section 2]{Saito_Yatagawa} and \cite[Section 3]{Saito}.
Let $X$ be a smooth projective scheme purely of dimension $d$ over a  field $k$.
Let $\mathcal{L}$ be a very ample invertible $\mathcal O_{X}$-module
and 
let $E\subseteq \Gamma (X,\mathcal{L})$ be a sub $k$-vector space defining
a closed immersion 
$i\colon X\rightarrow \mathbb{P}=
\mathbb{P}(E^\vee)={\rm Proj}_{k}({\rm Sym}^{\bullet}E)$. 
The dual projective space $\mathbb{P}^{\vee}
=\mathbb\mathbb{P}(E)$ parametrizes hyperplanes in $\mathbb{P}$.
The universal hyperplane
$Q=\{(x,H)\mid  x\in H\}\subseteq \mathbb{P}\times_{k}\mathbb{P}^{\vee}$ can be identified
with the covariant projective space bundle 
$\mathbb{P}(T^{\ast}\mathbb{P})$, cf. \cite[Subsection 3.2]{Saito}.
We also identify the universal family of
hyperplane sections
$X\times_{\mathbb{P}}Q$
with
$\mathbb{P}(X\times_{\mathbb{P}}T^{\ast}\mathbb{P})$.

\subsection{}For a line $L \subseteq \mathbb{P}^{\vee}$, let $A_{L}$ be the axis $\bigcap_{t\in L}H_{t}\subseteq \mathbb P$ of $L$.
We define $p_L\colon 
X_{L}=\{(x,H_{t})\mid x\in X\cap H_{t},\
t\in L\}\to L$ by the Cartesian
diagram
\begin{equation}
\begin{CD}
X_L@>>>X\times_{\mathbb{P}}Q\\
@V{p_L}VV@VVV\\
L@>>>{\mathbb P}^\vee.
\end{CD}
\label{eqpL}
\end{equation}
The projection $\pi_L\colon X_{L}\rightarrow X$ 
is an isomorphism
on the complement
$X_{L}^{\circ}=X- X\cap A_{L}$ (cf. \cite[Expos\'e XVIII, 3]{SGA7II} and \cite[Expos\'e XVII, 2]{SGA7II}).
Let $p_{L}^{\circ}\colon
X_{L}^{\circ}\rightarrow L$ be the restriction of $p_{L}$.
If moreover $A_{L}$ meets $X$ transversally, then
$\pi_L\colon X_{L}\rightarrow X$ is the blow-up of $X$ along $X\cap A_{L}$ and hence $X_{L}$ is smooth over $k$.

\subsection{}
Let $C\subseteq T^{\ast}X$ be a closed conical subset purely of dimension $d$.
Let $\widetilde C$ be the inverse image
by the surjection
$di \colon X\times_{\mathbb{P}}T^{\ast}\mathbb{P}\rightarrow T^{\ast}X$.
We consider the following conditions.

\begin{itemize}
\item[(E)] For every pair $(u,v)$ of distinct closed points of the base change $X_{\bar k}$ to an algebraic closure $\bar k$ of $k$,
the restriction
\begin{equation}
E\otimes_k\bar k\subseteq \Gamma(X,\mathcal{L})\otimes_k\bar k\rightarrow \mathcal{L}_{u}/\mathfrak{m}_{u}^{2}\mathcal{L}_{u}\oplus \mathcal{L}_{v}/\mathfrak{m}_{v}^{2}\mathcal{L}_{v} \notag
\end{equation}
is surjective.
\item[(C)] 
For every irreducible component $C_{a}$ of $C$,
the inverse image $\widetilde C_a
\subseteq \widetilde C
$ of $C_{a}$ by the surjection
$di \colon X\times_{\mathbb{P}}T^{\ast}\mathbb{P}\rightarrow T^{\ast}X$
is not contained in the $0$-section
$X\times_{\mathbb{P}}T^{\ast}_{\mathbb{P}}\mathbb{P}$.
\end{itemize}
By \cite[Lemma 3.19]{Saito},  after replacing ${\mathcal L}$ and $E$ by ${\mathcal L}^{\otimes n}$
and the image $E^{(n)}$ of $E^{\otimes n}\to \Gamma(X,{\mathcal L}^{\otimes n})$ for $n\geqq 3$ if necessary,
the condition (E) and (C) are satisfied.
For each irreducible component $C_{a}$ of $C$, we regard the projectivization
$\mathbb{P}(\widetilde C_{a})$ of  $\widetilde C_a\subseteq \widetilde C$
as a closed subset of
$\mathbb{P}(\widetilde C) \subseteq \mathbb{P}(X\times_{\mathbb{P}}T^*\mathbb{P}) =X\times_{\mathbb{P}}Q$.

The existence of a pre-good pencil is proved in \cite[Lemma 2.3]{Saito_Yatagawa}.
\begin{lemma}[{\cite[Lemma 2.3]{Saito_Yatagawa}}]\label{lem:pregoodpencil}
Let $X$ be a smooth projective scheme purely
of dimension $n$ over an algebraically closed field $k$.
Let $C\subseteq T^{\ast}X$ be a closed conical subset purely
of dimension $n$.
Let $\mathcal{L}$ be a very ample invertible $\mathcal O_{X}$-module and $E\subseteq \Gamma(X,\mathcal{L})$ a sub $k$-vector space 
satisfying the conditions {\rm (E)} and {\rm (C)}. 
Let $\mathbb{P}=
\mathbb{P}(E^\vee)$ and $\mathbf{G}=\mathrm{Gr}(1,\mathbb{P}^{\vee})$ be the Grassmannian variety
parameterizing lines in $\mathbb{P}^{\vee}$.
Then, there exists  a dense open subset $U\subseteq \mathbf{G}$ consisting of lines $L\subseteq \mathbb{P}^{\vee}$
satisfying the following conditions {\rm (1)}--{\rm (6):}
\begin{enumerate}
\item The axis $A_{L}$ meets $X$ transversally and the immersion $X\cap A_{L}\rightarrow X$ is $C$-transversal.
\item The blow-up $\pi_L\colon X_L\to X$ is $C$-transversal.
\item The morphism $p_{L}\colon X_{L}\rightarrow L$ is a pre-good fibration with respect to $\pi^\circ_LC$.

\item No isolated characteristic point of $p_{L}$ is contained in  the inverse image by $\pi_L\colon X_{L}\rightarrow X$ 
of the intersection $X\cap A_{L}$.

\item For every irreducible component $C_a$ of $C$,
the intersection $X_L\cap {\mathbb P}(\widetilde C_a)$ is non-empty.

\item For every pair of irreducible components $C_a\neq C_b$ of $C$,
the intersection $X_L\cap {\mathbb P}(\widetilde C_a)\cap {\mathbb P}(\widetilde C_b)$ is empty.
\end{enumerate}
\end{lemma}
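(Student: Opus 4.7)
My plan is to realize each of the six conditions as a non-empty Zariski open subset of the irreducible variety $\mathbf{G}$, so that the desired $U$ is obtained as their intersection. All six loci are controlled by incidence varieties built on the universal family $X\times_{\mathbb{P}}Q\simeq \mathbb{P}(X\times_{\mathbb{P}}T^{\ast}\mathbb{P})$, and the hypotheses (E) and (C) are exactly what makes the relevant dimension counts go through.

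For condition (1), I would form the incidence variety $\{(L,x)\in \mathbf{G}\times X : x\in X\cap A_{L}\}$ and use (E) to show its projection to $X$ is smooth; generic smoothness of the projection to $\mathbf{G}$ then forces a generic $A_{L}$ to meet $X$ transversally. The $C$-transversality of $X\cap A_{L}\to X$ is an analogous Bertini-type statement, this time using the incidence variety inside $\mathbf{G}\times \mathbb{P}(\widetilde C)$ and applying (C) to guarantee the projection to $\mathbb{P}(\widetilde C)$ is dominant with smooth generic fiber. Condition (2) is then formal: once $X\cap A_{L}$ is smooth and $C$-transversal, the blow-up $\pi_{L}\colon X_{L}\to X$ along that center is $C$-transversal by the standard property of blow-ups with smooth $C$-transversal centers (of the sort used throughout Saito's framework).

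For condition (3), I would exploit the identification $X\times_{\mathbb{P}}Q\simeq \mathbb{P}(X\times_{\mathbb{P}}T^{\ast}\mathbb{P})$: under it, the locus inside $X_{L}$ where $p_{L}$ fails to be $\pi_{L}^{\circ}C$-transversal is precisely $X_{L}\cap \mathbb{P}(\widetilde C)$. A dimension count inside $X\times_{\mathbb{P}}Q$ (of dimension $d+\dim \mathbb{P}-1$), using $\dim \mathbb{P}(\widetilde C)=\dim \mathbb{P}-1$ and $\dim X_{L}=d$, shows that the intersection is $0$-dimensional for generic $L$, yielding finitely many isolated characteristic points. Conditions (4)--(6) are further codimension-$\geq 1$ avoidances handled by similar counts: (4) keeps the characteristic points off $\pi_{L}^{-1}(X\cap A_{L})$, (5) forces nonempty intersection with each $\mathbb{P}(\widetilde C_{a})$ via the nonemptiness half of the dimension count enabled by (C), and (6) excludes collisions between distinct components.

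The main obstacle is the separation clause of (3)---that distinct isolated characteristic points lie over distinct points of $L$---which is \emph{not} immediate from generic $0$-dimensionality. To handle it, I would introduce the double incidence $\{(L,u,u')\in \mathbf{G}\times (X_{L}\cap \mathbb{P}(\widetilde C))^{2} : u\neq u',\ p_{L}(u)=p_{L}(u')\}$, impose the extra equation $p_{L}(u)=p_{L}(u')$ on the complement of the diagonal, and verify via a dimension count that the projection to $\mathbf{G}$ lands in a proper closed subset. This is the only step where one genuinely uses that $\dim\mathbf{G}=2(\dim\mathbb{P}-1)$ rather than just $\dim\mathbb{P}-1$, and it is precisely where the geometry of pencils (rather than individual hyperplane sections) becomes essential.
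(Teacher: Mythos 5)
The paper itself gives no proof of this lemma; it is cited directly from \cite[Lemma 2.3]{Saito_Yatagawa}, so there is nothing in the paper to compare your argument against. Judged against that reference, your Bertini-style incidence-variety plan is the right shape, and the sorting of (1)--(6) into codimension conditions on $\mathbf{G}$ is correct in outline, but two points need filling in. For condition (5), a dimension count with (C) gives that $\mathbb{P}(\widetilde{C}_a)$ is nonempty of dimension $\dim\mathbb{P}^{\vee}-1$, but a generic line $L$ meets the \emph{image} of $\mathbb{P}(\widetilde{C}_a)$ in $\mathbb{P}^{\vee}$ only if that image itself is of dimension $\dim\mathbb{P}^{\vee}-1$, i.e.\ only if the projection $\mathbb{P}(\widetilde{C}_a)\to\mathbb{P}^{\vee}$ is generically finite onto its image. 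That is not a formal dimension count but a genuine theorem (the Legendre/Radon-transform analysis in \cite{Saito}, guarding against the positive-characteristic analogue of projective-duality defectiveness); condition (C) guarantees $\mathbb{P}(\widetilde{C}_a)$ is nonempty of the expected dimension but by itself says nothing about the fiber dimension of the projection. For the separation clause of (3), your double-incidence variety is the right object, but what actually cuts its image in $\mathbf{G}$ down to a proper closed subset is condition (E)---the surjectivity onto $\mathcal{L}_u/\mathfrak{m}_u^2\mathcal{L}_u\oplus\mathcal{L}_v/\mathfrak{m}_v^2\mathcal{L}_v$ for distinct $u,v$---which bounds the fibers of that incidence variety over pairs; the equality $\dim\mathbf{G}=2(\dim\mathbb{P}-1)$ only records where the bound needs to land and does not by itself supply the required codimension. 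Making these two dependencies explicit would align your sketch with the Saito--Yatagawa proof.
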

\subsection{}In general, if $k$ is not necessarily algebraically closed, we say that a line $L\subseteq \mathbb P^\vee$ is a {\it pre-good pencil} with respect to $C$ if $L$ satisfies the conditions {\rm (1)}--{\rm (6)} in the above Lemma. 
If $L$ is a pre-good pencil with respect to $C$, then $X_L$ is a smooth scheme and $p_L\colon X_L\rightarrow L$ is a pre-good fibration with respect to $\pi^\circ_L C$.

\begin{lemma}\label{EGAIV:dim0}
Let $X$ be a smooth  scheme purely of dimension $d$ over  a field $k$, $i\colon Y\to X$  a closed immersion of codimension $c$ such that $Y$ is smooth and purely of dimension $d-c$.  Let $C$ be a conical closed subset of $T^\ast X$ such that $i$ is properly $C$-transversal.
Let $\pi\colon W\to X$ be the blow up of $X$ along $Y$. Then
$\pi$ is properly $C$-transversal and $Z:=W\times_X Y\hookrightarrow W$ is properly  $\pi^\circ C$-transversal. 
\end{lemma}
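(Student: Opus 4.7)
To prove the lemma, I would separately verify proper $C$-transversality of the blow-up $\pi\colon W\to X$ and proper $\pi^{\circ}C$-transversality of the exceptional divisor inclusion $i'\colon Z=E\hookrightarrow W$, where $E=\pi^{-1}(Y)$. For $C$-transversality of $\pi$, a local computation in standard blow-up coordinates $(u_1,\ldots,u_d)$ on $W$ (with $\pi(u)=(u_1,u_1u_2,\ldots,u_1u_c,u_{c+1},\ldots,u_d)$ and $E=\{u_1=0\}$) shows that at a point $w\in E$ with $\pi(w)=y$, the kernel of $d\pi|_w\colon T_y^{\ast}X\to T_w^{\ast}W$ is a $(c-1)$-dimensional subspace of the conormal fiber $N_{Y/X,y}$, spanned by $dx_k-u_k(w)\,dx_1$ for $2\le k\le c$. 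The proper $C$-transversality of $i$ gives $C_y\cap N_{Y/X,y}=\{0\}$, so $C_y\cap\ker(d\pi|_w)=\{0\}$, hence $C$-transversality at every $w\in E$; away from $E$ the claim is trivial.

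For the proper part of $\pi$, I would show every irreducible component of $C\times_XW$ has dimension $d=\dim W$. The proper $C$-transversality of $i$ rules out any component $C_a$ of $C$ from being entirely supported over $Y$, since otherwise $C_a\subseteq C\times_XY$ would have dimension $d>d-c$. Hence each $C_a$ yields a strict transform $\widetilde{C_a}\subseteq C\times_XW$ of dimension $d$. The set-theoretic residue lies over $E$ and is contained in $C|_Y\times_YE$, of dimension $(d-c)+(c-1)=d-1$. The $C$-transversality of $\pi$ forces $d\pi$ to have finite fibers on $C\times_XW$, and a local dimension count then gives $\dim_{(c,w)}(C\times_XW)=d$ at every point, so $C|_Y\times_YE$ is absorbed into $\bigcup_a\widetilde{C_a}$ and no extra component appears. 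For $i'\colon Z=E\hookrightarrow W$, the same coordinate calculation yields $N_{E/W,w}=\mathrm{span}(du_1|_w)$ with $d\pi|_w^{-1}(N_{E/W,w})=N_{Y/X,y}$, so $(\pi^{\circ}C)_w\cap N_{E/W,w}=d\pi(C_y\cap N_{Y/X,y})=\{0\}$ gives $\pi^{\circ}C$-transversality; combined with the identity $(\pi^{\circ}C)\times_WZ=d\pi(C|_Y\times_YE)$ and the finite-fiber property of $d\pi$ on $C|_Y\times_YE$, we obtain $\dim((\pi^{\circ}C)\times_WZ)=d-1=\dim Z$, establishing proper $\pi^{\circ}C$-transversality.

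The main obstacle is the dimension count for proper $C$-transversality of $\pi$: ruling out extraneous irreducible components of $C\times_XW$ supported on $E$. The strict transforms supply the $d$-dimensional part and the set-theoretic residue over $E$ has dimension $d-1$, but one must still invoke the $C$-transversality of $\pi$ (giving finiteness of $d\pi$ on $C\times_XW$) together with a generic-dimension argument on each component $C_a$ to conclude that the residue is absorbed into the strict transforms.
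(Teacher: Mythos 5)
Your approach is genuinely different from the paper's. The paper factors the exceptional divisor $Z=\mathbb{P}(N_{Y/X}^\vee)$ through the projection $p\colon Z\to Y$, observes that $p$ is smooth (hence automatically properly $i^\circ C$-transversal), applies \cite[Lemma 7.2.2]{Saito} to the composition $Z\to Y\to X$ to get proper $C$-transversality of $Z\to X$, and then applies that lemma again to $Z\to W\xrightarrow{\pi}X$ to conclude both that $\pi$ is properly $C$-transversal near $Z$ and that $Z\to W$ is properly $\pi^\circ C$-transversal; proper $C$-transversality away from $Z$ is then trivial because $\pi$ is an isomorphism there. Your local coordinate computation buys transparency (one sees exactly why $\ker(d\pi|_w)\subseteq N_{Y/X,y}$ and why the exceptional transversality comes down to $C_y\cap N_{Y/X,y}=\{0\}$), at the cost of a less clean dimension argument; the coordinate calculations and the argument for proper $\pi^\circ C$-transversality of $Z\hookrightarrow W$ are correct.

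There is, however, a genuine gap in your argument for proper $C$-transversality of $\pi$, which you partly acknowledge. Finiteness of $d\pi$ on $C\times_XW$ gives you $\dim(\pi^\circ C)\le\dim(C\times_XW)$, but that is the wrong direction: what you need is the \emph{lower} bound that every irreducible component of $C\times_XW$ has dimension $\ge d$, and no "local dimension count" based on the finite-fiber property supplies this. The lower bound comes from the fact that $\pi\colon W\to X$ is a local complete intersection morphism of relative virtual dimension $0$ (factor it through the regular graph embedding $W\hookrightarrow W\times X$ of codimension $d$, followed by the smooth projection $W\times X\to X$ of relative dimension $d$); base-changing along the flat $T^\ast X\to X$ and applying Krull's principal ideal theorem gives $\dim\ge d$ for every component of $C\times_XW$. (This is precisely what \cite[Lemma 7.2.1]{Saito} packages, and what the paper implicitly accesses via \cite[Lemma 7.2.2]{Saito}.) Once you have that lower bound, your upper bound $\dim(C\times_XE)\le d-1$ does rule out extraneous components over $E$, and your argument closes. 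Without it, the claim that "$C|_Y\times_Y E$ is absorbed into $\bigcup_a\widetilde{C_a}$" is unjustified.
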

\begin{proof}
Let $p\colon Z\to Y$ be the projection and $U=X\setminus Y$. 
Since $i$ is properly $C$-transversal and  the smooth morphism $p$ is properly $i^\circ C$-transversal, the composition $Z\to X$ is also properly $C$-transversal by \cite[Lemma 7.2.2]{Saito}. 
Applying \cite[Lemma 7.2.2]{Saito} again to the composition $Z\to W\xrightarrow{\pi} X$, we get that $\pi$ is properly $C$-transversal on a neighborhood of $Z\subseteq W$
and $Z\to W$ is properly $\pi^\circ C$-transversal. Since moreover $\pi$ is an isomorphism over $U$, hence $\pi$ must be properly $C$-transversal.
\end{proof}
\subsection{}\label{sub:defmeetproper}
Let $f\colon C\to Z$ and $h\colon W\to Z$ be morphisms of schemes of finite type over a field $k$. Assume that every irreducible component of $C$ is of dimension $n$ and that $h$ is locally of complete intersection of relative virtual dimension $d$. We say that {\it $h\colon W\to Z$ meets $f\colon C\to Z$ properly} if every irreducible component of $C\times_ZW$ is 
of dimension $n+d$.
\begin{lemma}[{\cite[Lemma 1.2.3]{Saito17b}}]\label{lem:openforpropertran}Let $\mathbb P$ be a projective space over a field $k$ and $X\subseteq \mathbb P$ a smooth closed subscheme purely of dimension $d$ over $k$. Let $C\subseteq T^\ast X$ be a conical closed subset such that   $C$ is purely of dimension $d$. Let $\mathbb{P}^{\vee}$ be the dual projective space and $\mathbf{G}=\mathrm{Gr}(1,\mathbb{P}^{\vee})$ the Grassmannian variety
parameterizing lines in $\mathbb{P}^{\vee}$. Then the line $L\in \mathbf{G}$ such that the immersion $ A_L\cap X\to X$ meets $C\to X$ properly form a dense open subset of the Grassmannian variety $\mathbf{G}$.
\end{lemma}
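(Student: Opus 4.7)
The plan is to realize this as a standard Bertini-type incidence-variety dimension count. Let $N = \dim \mathbb{P}$, so that $\dim \mathbf{G} = 2(N-1)$, and write $\pi\colon C \to X \subseteq \mathbb{P}$ for the projection from $C$. The key geometric observation is that for any point $x \in \mathbb{P}$ the hyperplanes of $\mathbb{P}$ through $x$ form a hyperplane $H_x^\vee \subseteq \mathbb{P}^\vee$, so the lines $L \in \mathbf{G}$ with $x \in A_L$ are exactly the lines contained in $H_x^\vee$ and form a sub-Grassmannian of $\mathbf{G}$ of dimension $2(N-2)$.

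First I would introduce the incidence variety
\[
J = \{(c,L) \in C \times \mathbf{G} \mid \pi(c) \in A_L\},
\]
a closed subscheme of $C \times \mathbf{G}$. The first projection $J \to C$ is surjective with every fibre a sub-Grassmannian of dimension $2(N-2)$, so every irreducible component of $J$ has dimension $d + 2(N-2) = \dim C + \dim \mathbf{G} - 2$.

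Next I would analyse the second projection $q\colon J \to \mathbf{G}$ component by component. For each irreducible component $J_\alpha$ of $J$, either $q|_{J_\alpha}$ is dominant, in which case upper semi-continuity of fibre dimension produces a dense open $U_\alpha \subseteq \mathbf{G}$ on which the fibre of $q|_{J_\alpha}$ has dimension at most $\dim J_\alpha - \dim \mathbf{G} = d-2$, or $q|_{J_\alpha}$ is non-dominant, in which case I would set $U_\alpha = \mathbf{G} \setminus \overline{q(J_\alpha)}$, over which $q|_{J_\alpha}$ has empty fibres. Intersecting the finitely many $U_\alpha$ with the dense open locus on which $A_L$ meets $X$ transversally (which exists by Bertini and on which $A_L \cap X \to X$ is a regular immersion of codimension $2$) yields a dense open $U \subseteq \mathbf{G}$ such that for every $L \in U$ each irreducible component of $q^{-1}(L) \cong C \times_X (A_L \cap X)$ has dimension at most $d-2$.

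For the matching lower bound I would use that $A_L$ is cut out in $\mathbb{P}$ by the two linear forms defining any pair of distinct hyperplanes $H_{t_0}, H_{t_1}$ with $t_0, t_1 \in L$, so $C \times_X (A_L \cap X) = \pi^{-1}(A_L)$ is defined in $C$ locally by two equations. Krull's Hauptidealsatz then shows that every irreducible component of this preimage lies in some component $C_a$ of $C$ with codimension at most $2$, hence dimension at least $d-2$. Combining the two bounds, for $L \in U$ every component of $C \times_X (A_L \cap X)$ has dimension exactly $d + (-2)$, which is precisely the condition in \ref{sub:defmeetproper} for the codimension-$2$ regular immersion $A_L \cap X \to X$ to meet $C \to X$ properly. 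The only technical point I expect, though routine, is the simultaneous bookkeeping across the (possibly many) irreducible components of $C$ and $J$, which necessitates the finite intersection step above.
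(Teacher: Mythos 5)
Your proposal is correct and follows essentially the same route as the paper's proof: your incidence variety $J$ is exactly the paper's $C_{\mathbf A}=C\times_{\mathbb P}\mathbf A$ over the universal family of axes, the upper bound is the same generic-fibre-dimension/semicontinuity count over $\mathbf G$, and your Hauptidealsatz lower bound is precisely what the paper obtains by citing \cite[Lemma 7.2.1]{Saito}. The extra Bertini transversality step you insert is harmless but not needed for the dimension statement itself.
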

\begin{proof}This follows directly from \cite[Lemma 1.2.3]{Saito17b}. For completeness, we include the proof. Let $\mathbf A\subseteq \mathbb P\times\mathbf G$ be the universal family of linear subspaces of codimension $2$, i.e., 
\begin{equation}\mathbf A=\left\{(x,L)\in \mathbb P\times\mathbf G\mid x\in A_L\right\}.
\end{equation}
We consider the Cartesian diagram 
\begin{align}
\begin{gathered}
\xymatrix{
&C_{\mathbf A}\ar@{}|\Box[rd]\ar[ddl]\ar[r]\ar[d]&C\ar[d]\\
&X_{\mathbf A}\ar[d]\ar[r]\ar@{}|\Box[rd]&X\ar[d]\\
\mathbf G&\mathbf A\ar[l]\ar[r]&\mathbb P.
}
\end{gathered}
\end{align}
Since the projection $\mathbf A\to\mathbb P$ is smooth of relative dimension $\dim \mathbf G-2$, we have $\dim C_{\mathbf A}=\dim\mathbf G+d-2$. Hence the open subset $U$ of $\mathbf G$ consisting of $L$ such that $\dim (C\times_{\mathbb P}A_L)=\dim (C\times_X(X\cap A_L))\leq d-2$ is dense. For $L\in U$, by \cite[Lemma 7.2.1]{Saito}, $\dim (C_a\times_X(X\cap A_L))\geq d-2$ for any irreducible component $C_a$ of $C$. Thus we must have  $ \dim (C_a\times_X(X\cap A_L))= d-2$ for all $a$, i.e., $ A_L\cap X\to X$ meets $C\to X$ properly.
\end{proof}
\begin{lemma}\label{pencil}
Let $k$ be an infinite field. Let $X$ be a smooth projective scheme purely of dimension $n$ over  $k$.
Let $C\subseteq T^{\ast}X$ be a closed conical subset purely of dimension $n$.
Let $\mathcal{L}$ be a very ample invertible $\mathcal O_{X}$-module and $E\subseteq \Gamma(X,\mathcal{L})$ a sub $k$-vector space 
satisfying the conditions {\rm (E)} and {\rm (C)}. 
Let $\mathbb{P}=
\mathbb{P}(E^\vee)$ and  $\mathbf{G}=\mathrm{Gr}(1,\mathbb{P}^{\vee})$ be the Grassmannian variety parameterizing lines in $\mathbb{P}^{\vee}$.
Then, there exists a dense open subset $U\subseteq \mathbf{G}$ consisting of lines $L\subseteq \mathbb{P}^{\vee}$ satisfying the following conditions 
{\rm (1)}--{\rm (4):}
\begin{enumerate}
\item $L_{\bar k}$ is a pre-good pencil with respect to $C_{\bar k}$. Let $u_1,\ldots, u_m$ be the isolated characteristic points of $p_L\colon X_L\to X$ with respect to $\pi_L^\circ C$.
\item The point $u_i$ is purely inseparable over $p_L(u_i)$ for all $i=1,\cdots,m$.
\item The immersion $i\colon A_L\cap X\to X$ is properly $C$-transversal.
\item The blow up $\pi_L\colon X_L\to X$ along $A_L$ is properly $C$-transversal.
\end{enumerate}

\end{lemma}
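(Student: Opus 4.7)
The plan is to construct $U$ as the intersection $U_1\cap U_2$ of two dense opens of $\mathbf{G}$, from which conditions (1) and (3) are obtained directly, while conditions (2) and (4) follow automatically. First, apply Lemma~\ref{lem:pregoodpencil} to $X_{\bar k}$, $C_{\bar k}$ and $E\otimes_k\bar k$ to produce a dense open subset $V\subseteq \mathbf{G}_{\bar k}$ parametrizing lines that are pre-good pencils with respect to $C_{\bar k}$. Each of the conditions (1)--(6) of Lemma~\ref{lem:pregoodpencil} is $\mathrm{Gal}(\bar k/k)$-invariant (the defining data $X$, $C$, $\mathcal{L}$, $E$ come from $k$, and permuting irreducible components of $C_{\bar k}$ preserves the set of lines satisfying the conditions), so $V$ descends to a dense open $U_1\subseteq \mathbf{G}$, which is precisely the locus of $L$ for which condition~(1) holds. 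Next, Lemma~\ref{lem:openforpropertran} provides a dense open $U_2\subseteq\mathbf{G}$ on which the immersion $i\colon A_L\cap X\to X$ meets $C\to X$ properly; combined with the $C$-transversality of $i$ inherited from $U_1$, the immersion is properly $C$-transversal on $U_1\cap U_2$, giving condition~(3). Since the axis $A_L$ meets $X$ transversally on $U_1$, the subscheme $A_L\cap X$ is smooth of codimension two in $X$, and Lemma~\ref{EGAIV:dim0} applied to $i$ then yields condition~(4) automatically.

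To verify condition~(2) on $U_1$, fix $L\in U_1$ and an isolated characteristic point $u_i$ of $p_L\colon X_L\to L$. The geometric fiber of $X_{L,\bar k}\to X_L$ above $u_i$ consists of exactly $[k(u_i):k]_{\mathrm{sep}}$ distinct $\bar k$-points, each of which is an isolated characteristic point of $(p_L)_{\bar k}$ (since $p_L$ fails to be $\pi_L^\circ C$-transversal at $u_i$ and is transversal in a punctured neighborhood). The pre-good pencil property of $L_{\bar k}$ implies that distinct isolated characteristic points of $(p_L)_{\bar k}$ have distinct images in $L_{\bar k}$. All such images lie above $p_L(u_i)\in L$, and there are exactly $[k(p_L(u_i)):k]_{\mathrm{sep}}$ geometric points of $L_{\bar k}$ above $p_L(u_i)$. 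Hence
\[
[k(u_i):k]_{\mathrm{sep}} \;\leq\; [k(p_L(u_i)):k]_{\mathrm{sep}},
\]
and combining with the multiplicativity $[k(u_i):k]_{\mathrm{sep}}=[k(u_i):k(p_L(u_i))]_{\mathrm{sep}}\cdot[k(p_L(u_i)):k]_{\mathrm{sep}}$ forces $[k(u_i):k(p_L(u_i))]_{\mathrm{sep}}=1$, so that $k(u_i)/k(p_L(u_i))$ is purely inseparable.

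Setting $U:=U_1\cap U_2$ thus gives a dense open of $\mathbf{G}$ satisfying all four conditions. The delicate step is the residue-field count in the second paragraph; everything else is a direct assembly of Lemmas~\ref{lem:pregoodpencil}, \ref{EGAIV:dim0}, and~\ref{lem:openforpropertran}. The hypothesis that $k$ is infinite plays no visible role in the existence of $U$ as a dense open of $\mathbf{G}$; it is presumably used when one later wishes to guarantee that $U$ contains a $k$-rational point.
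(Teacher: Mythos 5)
Your proof follows the same overall strategy as the paper: settle the algebraically closed case by intersecting the open of Lemma~\ref{lem:pregoodpencil} with the open of Lemma~\ref{lem:openforpropertran} and invoking Lemma~\ref{EGAIV:dim0}, then descend to $k$. Your residue-field count for condition~(2) makes explicit what the paper compresses into one sentence (``there exists only one isolated characteristic point in $X_L\otimes_k\overline{k(v)}$''), and your closing remark about the role of the infinite-field hypothesis matches the paper's last line exactly.

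There is one small gap in the descent step. You assert that $V$ ``descends'' because the conditions (1)--(6) of Lemma~\ref{lem:pregoodpencil} are Galois-invariant, but Galois-invariance of the \emph{conditions} does not make the particular open $V$ Galois-invariant: Lemma~\ref{lem:pregoodpencil} only hands you \emph{some} dense open satisfying the conditions, not the full locus of such lines. You need an intermediate step: since $V$ is quasi-compact it is defined over a finite extension $k'/k$, and the union $\bigcup_{\sigma\in\mathrm{Gal}(k'/k)}\sigma(V)$ is an open dense $\mathrm{Gal}(k'/k)$-invariant subset (it still consists of lines satisfying the conditions, by Galois-invariance of the conditions), and \emph{that} descends. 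The paper avoids this by taking $U=\mathbf{G}\setminus Z$, where $Z$ is the image of the closed complement $Z'=\mathbf{G}_{\bar k}\setminus U'$ under $\mathbf{G}_{\bar k}\to\mathbf{G}$, which is automatically a closed subset of $\mathbf{G}$. Either route works once the technicality is filled in, so the two proofs are essentially the same in substance.
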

We say that a line $L\subseteq \mathbb P^\vee$ is a {\it good pencil} with respect to $C$ if $L$ satisfies  the conditions {\rm (1)}-{\rm (4)} in the above Lemma. 
If $L$ is a good pencil, then  $X_L$ is a smooth scheme and $p_L\colon X_L\rightarrow L$ is a good fibration.
\begin{proof}
We first consider the case where $k$ is algebraically closed. 
In this case, the condition (2) is trivial.
Let $U\subseteq \mathbf{G}$ be the dense open subset defined in Lemma \ref{lem:pregoodpencil}.
Let $U^\prime\subseteq \mathbf{G}$ be the dense open subset defined by Lemma \ref{lem:openforpropertran}.
Let $U_0=U\cap U^\prime$. Then for any $L\in U_0$, the line $L$ satisfies the condition (1) by  Lemma \ref{lem:pregoodpencil}. In particular $A_L\cap X$ is smooth and $A_L\cap X\to X$ is $C$-transversal.
Since $L\in U^\prime$, $A_L\cap X\to X$ meets $C\to X$ properly. Thus every irreducible component 
of $C\times_X(A_L\cap X)$ is of dimension $d-2$ (cf. Subsection \ref{sub:defmeetproper}), i.e.,  $A_L\cap X\to X$ is properly $C$-transversal. 
Since $A_L$ meets $X$ transversally, $X_L\to X$ is the blow up of $X$ along $A_L\cap X$ (cf. \cite[Expos\'e XVIII, 3]{SGA7II}).
By Lemma \ref{EGAIV:dim0}, $X_L\to X$ is also 
properly $C$-transversal. Thus any  $L\in U_0$ satisfies the conditions (1)-(4) of Lemma \ref{pencil}. This proves the lemma in the case where $k$ is algebraically closed.

 For a general field $k$, let $U'\subseteq \mathbf{G}_{\bar{k}}$ be an open subset such that $L\in U'$ satisfies the conditions (1)-(4) of Lemma \ref{pencil} for $X_{\bar{k}}$ and let $Z'=\mathbf{G}_{\bar{k}}\setminus U'$ be its complement.
Let $Z$ be the image of $Z'$ by $\mathbf{G}_{\bar{k}}\to \mathbf{G}$ and let $U$ be its complement $U=\mathbf{G}\setminus Z$. Then the extension $L_{\bar{k}}$ of a $k$-rational point $L\in U(k)$ satisfies the conditions (1)-(4).

For any morphism $f\colon X\rightarrow Y$ (respectively $g\colon Y\rightarrow X$), 
we note that if $f_{\bar k}$ (respectively $g_{\bar k}$) is $C_{\bar{k}}$-transversal, then  $f$ (respectively $g$) is also $C$-transversal. 
Let  $L\subseteq \mathbb P^\vee$ be a line such that $L_{\bar k}$ satisfies the conditions (1)-(4) of Lemma \ref{pencil} with respect to $C_{\bar k}$. Let $u$ be an isolated characteristic point of $p_L$ and put $v=p_L(u)$.
Since there exists only one isolated characteristic point in $X_L\otimes_k{\overline{k(v)}}$, the extension $k(u)/k(v)$  must be purely inseparable.
Hence $L\in U(k)$ satisfies the conditions (1)-(4).

When $k$ is an infinite field, there exists a $k$-rational point in $U$ since the Grassmannian varieties are rational.
This proves the lemma.
\end{proof}

\section{Characteristic cycle}\label{sec:cc}
\subsection{}We prove Theorem \ref{main} in this section.
First, we recall the definitions of singular support and characteristic cycle for any \'etale sheaf with finite coefficients. Then we introduce these notions for $\ell$-adic sheaves. In order to 
define the characteristic cycles for $\bQl$-sheaves, following a suggestion of T. Saito, we construct a decomposition homomorphism $d_X\colon K(X,\bQl)\to K(X,\bFl)$ for certain   schemes $X$ in \ref{sub:defidecomp}, which can be viewed as a generalization of the decomposition homomorphisms defined in \cite[15.2]{Serre97} (See also \cite[Th\'eor\`eme 2]{Serre68}).
\begin{definition}\label{Milnor}
Let $k$ be a perfect field of characteristic $p$. Let $\Lambda$ be  a finite field of characteristic $\ell(\ell\neq p)$ or $\Lambda=\bQl$.
Let $X$ be a smooth scheme over $k$  and  $\F\in D_c^b(X,\Lambda)$. Let $C$ be a conical closed subset of $T^\ast X$. 
\begin{enumerate}
\item $($\cite[1.3]{Beilinson}$)$ We say that $\mathcal F$ is micro-supported on $C$ if the following condition holds:

Let $(h, f)$ be any pair of morphisms $h\colon W\to X$ and $f\colon W\to Y$ between smooth schemes over $k$ such that $h$ is $C$-transversal and $f$ is $h^\circ C$-transversal.
Then $f$ is $($universally$)$ locally acyclic with respect to $h^*\F$.

\item Assume that $C$ and $X$ are purely of dimension $d$. 
Assume that $\mathcal F$ is micro-supported on $C$.
Let $Z$ be a $d$-cycle supported on $C$.
We say that $Z$ satisfies the Milnor formula for $\mathcal F$ if the following condition holds:

Let  $(j, f)$ be any pair of an \'etale morphism $j:U\to X$ and a morphism $f:U\to Y$ to a smooth curve $Y$ over $k$.
Let $u\in U$ be a closed point such that $u$ is at most an isolated characteristic point of $f$ with respect to $j^\circ C$.
Then we have
\begin{equation}\label{eqYZ:milnor}
-\dimtot\phi_u(j^*\F, f)=(j^*Z, df)_{T^*U, u},
\end{equation}
which is called the Milnor formula.  Here $\phi$ denotes the vanishing cycle functor and ${\dimtot}$ is the total dimension (cf. Subsection \ref{sub:localconsforfinite}).
\end{enumerate}
\end{definition}
\subsection{}We have the following existence theorem for the singular support and characteristic cycle, which is proved by Beilinson and T. Saito respectively.
\begin{theorem}\label{thm:beilinsonsaito} 
Let $k$ be a perfect field of characteristic $p$ and  $\Lambda$ be a finite local ring such that the characteristic $\ell$ of the residue field of $\Lambda$ is invertible in $k$.
Let $X$ be a smooth scheme over $k$ and $\mathcal F\in D_{\rm ctf}^b(X,\Lambda)$. Here $D_{\rm ctf}^b(X,\Lambda)$ is the full subcategory of $D^b(X,\Lambda)$ consisting of objects $\mathcal G$ with finite Tor-dimension  such that $\mathcal H^i(\mathcal G)$ are constructible for all $i$.

$(1)$ {\rm (Beilinson, \cite[1.3]{Beilinson})} There is a unique minimal conical closed subset $C$ of $T^*X$ such that $\mathcal F$ is micro-supported on $C$. We  denote this unique element by $SS\F$, and we call it the singular support of $\mathcal F$.  If $X$ is purely of dimension $d$,  then every irreducible component of $SS(\mathcal F)$ is of dimension $d$.

$(2)$ {\rm (Saito, \cite[Theorem 5.9, Theorem 5.19]{Saito})} Assume that $X$ is purely of dimension $d$. Then there is a unique $d$-cycle $CC\mathcal F$ supported on $SS\mathcal F$ with $\mathbb Z$-coefficients such that $CC\mathcal F$ satisfies the Milnor formula for $\mathcal F$.

\end{theorem}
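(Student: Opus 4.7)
\emph{Proof proposal.} The two parts require quite different inputs, and both go well beyond an elementary argument.

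For part (1), my plan is to establish two properties of the collection $\mathcal S$ of closed conical subsets on which $\F$ is micro-supported: (i) $\mathcal S$ is non-empty, and (ii) $\mathcal S$ is stable under finite intersection. Granting these, the Noetherian property of $T^*X$ gives a unique minimal element $SS\F=\bigcap_{C\in\mathcal S}C$. For (i), take a stratification $X=\bigsqcup X_\alpha$ by smooth strata on which $\F$ has locally constant cohomology sheaves; then $C_0=\bigcup_\alpha\overline{T^*_{X_\alpha}X}$ lies in $\mathcal S$, by a direct check using local acyclicity of smooth morphisms transverse to the strata. For (ii), one must show: if $\F$ is micro-supported on $C_1$ and on $C_2$, it is micro-supported on $C_1\cap C_2$. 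Given a test pair $(h,f)$ with the transversality conditions for $C_1\cap C_2$, the strategy is a deformation argument: embed $h$ into a one-parameter family $h_t$ which is $C_i$-transversal for each $i$ at $t\neq 0$ while $h_0=h$, apply the micro-support hypotheses for each fiber $h_t$, and specialize via the constructibility of vanishing cycles. This deformation step is the main obstacle in part (1). For the dimension statement, I would take a generic pencil $L$ as in Lemma \ref{pencil} and, via the fibration $p_L\colon X_L\to L$, reduce to the one-dimensional case, where the dimension bound is classical.

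For part (2), uniqueness is straightforward: fix an irreducible component $C_a$ of $SS\F$ and a smooth point $\xi\in C_a$ whose image $x\in X$ does not lie on the image of any other component; choose a local function $f$ on an \'etale neighborhood $U$ of $x$ with $df(x)=\xi$ and such that $df$ is transverse to all other components of $SS\F$ near $x$. Then the only isolated characteristic point of $f$ near $x$ is $x$ itself, and the Milnor formula (\ref{eqYZ:milnor}) of Definition \ref{Milnor}(2) pins down the multiplicity of $C_a$. Running over all components $a$ shows $Z=Z'$.

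For existence, my plan would be to define the multiplicities component-by-component using the recipe from the uniqueness argument, and then verify the Milnor formula at an \emph{arbitrary} test pair $(j,f)$. This verification is the genuine obstacle: it cannot be reduced to a single component, since a given $f$ may have characteristic points where several components of $SS\F$ meet, and non-trivial cancellation must be exhibited. The natural global tool is the Radon transform: take a good pencil $L$ (Lemma \ref{pencil}), reduce the Milnor formula on $X$ to a statement about $p_L\colon X_L\to L$ using the fibration formula (Lemma \ref{cc}) and the blow-up formula (Lemma \ref{blowup}) of Section \ref{sec:poc}, and conclude by induction on $\dim X$ together with the curve case. The integrality of the resulting multiplicities is forced by the uniqueness argument applied \'etale-locally, so the remaining content is the Milnor formula itself, which through the Radon transform becomes a computation of vanishing cycles on the total space of the pencil — exactly the point where the machinery developed in Section \ref{sec:poc} is indispensable.
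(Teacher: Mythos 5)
There is no proof of Theorem \ref{thm:beilinsonsaito} in the paper for you to match: the statement is quoted verbatim from Beilinson \cite{Beilinson} and T.~Saito \cite{Saito}, and the paper uses it as a black box. Your proposal is therefore an attempt to reprove two deep external theorems, and at every point where the real difficulty lies you explicitly stop short, so what you have is a plan with the essential content missing rather than a proof. Concretely: in part (1) the stability of the family of micro-supports under intersection is not established by your ``deformation of the test pair $(h,f)$'' sketch --- you give no construction of the family $h_t$, no reason the transversality for $C_1$ and $C_2$ separately can be arranged at $t\neq 0$, and no specialization argument; in Beilinson's work this step is handled quite differently, via the Radon transform and the comparison of micro-supports on $\mathbb P$ and its dual. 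Worse, the dimension statement (every component of $SS\F$ has dimension exactly $d$) is \emph{the} main theorem of \cite{Beilinson} (``constructible sheaves are holonomic''); it does not reduce to the curve case by slicing with a pencil as you assert, since cutting by hyperplane sections controls neither the lower nor the upper bound on $\dim SS\F$ without the full Radon-transform analysis.

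In part (2) your uniqueness argument is essentially the standard one (isolated characteristic points produced from generic covectors of a single component), but the existence of a cycle satisfying the Milnor formula \eqref{eqYZ:milnor} at \emph{all} test pairs is Saito's main theorem \cite[Theorem 5.9]{Saito}, proved by a delicate induction via the Radon/Legendre transform together with Laumon's local Fourier transform and continuity properties of the total dimension of vanishing cycles. It cannot be extracted from Lemma \ref{pencil}, Lemma \ref{cc} and Lemma \ref{blowup} of Section \ref{sec:poc}: those are purely cycle-theoretic identities for Gysin pull-backs and good pencils, used later in the proof of Theorem \ref{main} \emph{after} $CC\F$ and the Milnor formula are already available; they presuppose the cycle $Z=CC\F$ rather than construct it. Finally, the integrality of the multiplicities is not ``forced by the uniqueness argument applied \'etale-locally'': a priori the induction produces rational coefficients, and integrality is a separate theorem (\cite[Theorem 5.18]{Saito}, invoked in Section \ref{sec:sc} of this paper precisely because it is not automatic). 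So the proposal correctly names the known strategy but leaves genuine gaps at the three decisive steps: existence of a minimal micro-support, the dimension bound, and the existence (with integral coefficients) of $CC\F$ satisfying the Milnor formula.
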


\subsection{}We recall the following result in \cite{Saito}, which can be used to define the singular support and characteristic cycle for any $\ell$-adic sheaf.
\begin{lemma}[{\cite{Saito}}]\label{reductionCC}
Let $X$ be a smooth  scheme purely of dimension $d$ over a perfect field $k$ of characteristic $p$. 
Let $\Lambda$ be a finite local ring with residue field $\Lambda_0$ of characteristic $\ell(\ell\neq p)$. For any $\mathcal F\in D_{ctf}^{b}(X,\Lambda)$, we have
\begin{align}
\label{bql-1}SS(\mathcal F)&=SS(\mathcal F\otimes_\Lambda^L \Lambda_0),\\
\label{bql-2}CC(\mathcal F)&=CC(\mathcal F\otimes_\Lambda^L \Lambda_0).
\end{align}
\end{lemma}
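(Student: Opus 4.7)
The idea is to reduce both equalities to the compatibility of the defining characterizations of $SS$ and $CC$—via local acyclicity and the Milnor formula, respectively—with the derived reduction $\mathcal{F} \mapsto \mathcal{F} \otimes_\Lambda^L \Lambda_0$. The key input is that vanishing cycles commute with derived tensor products, combined with Nakayama's lemma to invert the reduction.

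For \eqref{bql-1}, recall that $\mathcal{G}$ is micro-supported on a conical closed subset $C \subseteq T^*X$ iff for every pair $(h, f)$ with $h \colon W \to X$ being $C$-transversal and $f \colon W \to Y$ being $h^\circ C$-transversal, the vanishing cycles $\phi(h^*\mathcal{G}, f)$ vanish. The crucial observation is the projection formula
\[
\phi\bigl(h^*(\mathcal{F} \otimes_\Lambda^L \Lambda_0), f\bigr) \simeq \phi(h^*\mathcal{F}, f) \otimes_\Lambda^L \Lambda_0,
\]
which follows from the construction of $\phi$ and the compatibility of $h^*$ with $-\otimes_\Lambda^L \Lambda_0$. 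The forward inclusion $SS(\mathcal{F} \otimes_\Lambda^L \Lambda_0) \subseteq SS(\mathcal{F})$ is immediate. For the reverse inclusion, suppose $\phi(h^*(\mathcal{F} \otimes_\Lambda^L \Lambda_0), f) = 0$; then $\phi(h^*\mathcal{F}, f) \otimes_\Lambda^L \Lambda_0 = 0$, and since $\phi(h^*\mathcal{F}, f) \in D^b_{ctf}$, Nakayama's lemma applied stalkwise (valid since $\Lambda$ is a finite local ring) forces $\phi(h^*\mathcal{F}, f) = 0$.

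For \eqref{bql-2}, set $C := SS(\mathcal{F}) = SS(\mathcal{F} \otimes_\Lambda^L \Lambda_0)$ using \eqref{bql-1}. Both $CC(\mathcal{F})$ and $CC(\mathcal{F} \otimes_\Lambda^L \Lambda_0)$ are $d$-cycles with $\mathbb{Z}$-coefficients supported on $C$. By the uniqueness part of Theorem \ref{thm:beilinsonsaito}(2), it suffices to verify that $CC(\mathcal{F} \otimes_\Lambda^L \Lambda_0)$ satisfies the Milnor formula \eqref{eqYZ:milnor} for $\mathcal{F}$ itself. Applying the Milnor formula already known for $\mathcal{F} \otimes_\Lambda^L \Lambda_0$, this reduces to the identity
\[
\mathrm{dimtot}\,\phi_u(j^*\mathcal{F}, f) \;=\; \mathrm{dimtot}\,\phi_u\bigl(j^*(\mathcal{F} \otimes_\Lambda^L \Lambda_0), f\bigr),
\]
for all $(j, f, u)$ as in Definition \ref{Milnor}(2). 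By the projection formula above, this amounts to the invariance of $\mathrm{dimtot}$ on virtual $\Lambda$-representations of the local Galois group under derived reduction modulo $\mathfrak{m}$.

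The main point I expect to require care is this invariance of $\mathrm{dimtot} = \mathrm{rank} + \mathrm{Sw}$. Rank is straightforward: writing $V \in D^b_{ctf}$ as a bounded complex of finite free $\Lambda$-modules, the alternating sum of $\Lambda$-ranks coincides with the alternating sum of $\Lambda_0$-ranks after $-\otimes_\Lambda \Lambda_0$. The Swan conductor requires the fact that $\mathrm{Sw}$ is additive on distinguished triangles and agrees with that of the reduction on lisse free $\Lambda$-sheaves, so that $\mathrm{Sw}$ descends to the decomposition map $K(W_K, \Lambda) \to K(W_K, \Lambda_0)$. For this paper it is natural simply to extend the Subsection \ref{sub:localconsforfinite} definition of $\mathrm{dimtot}$ to finite local rings by setting $\mathrm{dimtot}_\Lambda := \mathrm{dimtot}_{\Lambda_0} \circ (-\otimes_\Lambda^L \Lambda_0)$, in which case once \eqref{bql-1} is proved, \eqref{bql-2} follows essentially by definition.
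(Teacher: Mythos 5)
Your proposal is correct and follows essentially the same strategy as the paper's. The only visible difference is in how \eqref{bql-1} is handled: the paper cites it directly from \cite[Corollary 4.5.4]{Saito}, while you unpack that citation into a Nakayama argument on the perfect complex $\phi(h^*\mathcal F, f)\in D^b_{ctf}$, using that vanishing cycles commute with $-\otimes_\Lambda^L\Lambda_0$. This makes your proof more self-contained, and the argument is sound: a perfect complex over a finite local ring whose derived reduction vanishes is itself zero, so your two inclusions both go through.

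For \eqref{bql-2}, you and the paper argue identically: invoke the uniqueness in Theorem \ref{thm:beilinsonsaito}(2) and reduce to the invariance of $\dimtot\phi_u(j^*\mathcal F,f)$ under derived reduction; the paper cites this invariance from \cite[Expos\'e XIII, (2.1.13.1)]{SGA7II} or \cite[Corollary D.8, p.349]{KW}. One small caution about your closing remark: you cannot simply \emph{define} $\dimtot_\Lambda := \dimtot_{\Lambda_0}\circ(-\otimes_\Lambda^L\Lambda_0)$ and declare \eqref{bql-2} tautological, because the Milnor formula characterizing $CC(\mathcal F)$ over $\Lambda$ in Theorem \ref{thm:beilinsonsaito}(2) already uses the intrinsic $\dimtot$ for perfect $\Lambda$-complexes with Weil group action (rank plus Swan conductor via the break decomposition). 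What you actually need — and correctly identify in the preceding sentences — is the nontrivial fact that this intrinsic $\dimtot$ agrees with the one computed after reduction, which is precisely the content of the SGA7/KW citation.
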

\begin{proof}Formula \eqref{bql-1} is proved in \cite[Corollary 4.5.4]{Saito}. By the proof of \cite[Theorem 5.9]{Saito}, for every pair $(j,f)$ of an \'etale morphism
$j\colon U\to X$ and a morphism $f\colon U\to Y$ to a smooth curve over $k$, we have (cf. \cite[Expos\'e XIII, (2.1.13.1)]{SGA7II} or \cite[Corollary D.8, p.349]{KW})
\begin{equation}
{\dimtot}\phi_u(j^\ast\mathcal F,f)={\dimtot}\phi_u(j^\ast(\mathcal F\otimes_\Lambda^L\Lambda_0),f),
\end{equation} 
where $u$ is at most an isolated characteristic point of $f$ with respect to $j^\circ SS(\mathcal F)$. This implies that \eqref{bql-2} by Theorem \ref{thm:beilinsonsaito}(2).
\end{proof}
\subsection{}\label{sub:recallladicsheaf} Now we start to define the characteristic cycle for $\ell$-adic sheaves. 
Let $k$ be a perfect field of characteristic $p$ and $\ell\neq p$ a prime number.
Let $E$ be a finite extension of $\mathbb Q_\ell$, $\mathcal O_E$ the ring of integers of $E$. Let $\mathfrak m_E$ be the maximal ideal of $\mathcal O_E$ with a uniformizer $\pi_E$ and residue field $k_E=\mathcal O_E/\mathfrak m_E$.
Let $X$ be a smooth scheme purely of dimension $d$ over $k$ and  $\mathcal F\in D_c^b(X,\mathcal O_E)$.
Recall that (cf. \cite[(1.1.2)]{Weil2} and \cite[Expos\'e VI, D\'efinition 1.1.1]{SGA5}) $\mathcal F$ is a family 
$(\mathcal F_n)_{n\geq 0}$ of objects $\mathcal F_n\in D_{ctf}^b(X,\mathcal O_E/\mathfrak m_E^{n+1})$ with ismorphisms for all $n\geq 0$
\begin{equation}\label{eq:repforladic}
\mathcal F_{n+1}\otimes_{\mathcal O_E/\mathfrak m_E^{n+2}}^L\mathcal O_E/\mathfrak m_E^{n+1}\xrightarrow{~\simeq~}\mathcal F_{n}.
\end{equation}
Recall that the reduction map
\begin{align}
-\otimes^L_{\mathcal O_E}\mathcal O_E/\mathfrak m_E\colon D_c^b(X,\mathcal O_E)\to D_c^b(X,\mathcal O_E/\mathfrak m_E)
\end{align}
is conservative \cite[Theorem 3.6 (ii)]{Eke90}. 
By Proposition \ref{reductionCC}, for any integer $n\geq 0$, we have  $SS(\mathcal F_n)=SS(\mathcal F_0)=SS(\mathcal F\otimes_{\mathcal O_E}^L\mathcal O_E/\mathfrak m_E)$ and $CC(\mathcal F_n)=CC(\mathcal F_0)=CC(\mathcal F\otimes_{\mathcal O_E}^L\mathcal O_E/\mathfrak m_E)$.
\begin{definition}\label{def:ccforladic}
We define the fake singular support $SS(\mathcal F)$ and the characteristic cycle $CC(\mathcal F)$ of $\mathcal F$ to be
\begin{align}
\label{eq:def:ccforladic1}&SS(\mathcal F):=SS(\mathcal F\otimes_{\mathcal O_E}^L\mathcal O_E/\mathfrak m_E),\\
\label{eq:def:ccforladic2}&CC(\mathcal F):=CC(\mathcal F\otimes_{\mathcal O_E}^L\mathcal O_E/\mathfrak m_E).
\end{align}
\end{definition}
\begin{proposition}\label{thm:milnorforladic}
Under the assumptions in Subsection \ref{sub:recallladicsheaf}.
Then $\mathcal F$ is micro-supported on $SS(\mathcal F)$ $($cf. Definition \ref{Milnor}$)$ and $CC(\mathcal F)$
satisfies the Minor formula \eqref{eqYZ:milnor} for $\mathcal F$.
\end{proposition}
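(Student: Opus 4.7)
The strategy will be to deduce both assertions from the corresponding finite-coefficient statements provided by Theorem \ref{thm:beilinsonsaito} and Lemma \ref{reductionCC}. By Definition \ref{def:ccforladic} one has $SS(\mathcal F) = SS(\mathcal F_0)$ and $CC(\mathcal F) = CC(\mathcal F_0)$, where $\mathcal F_0 \coloneqq \mathcal F \otimes_{\mathcal O_E}^L \mathcal O_E/\mathfrak m_E$, and Lemma \ref{reductionCC} shows the same equalities at every finite level $\mathcal F_n$. Hence the essential task is to lift micro-support and the Milnor formula from each $\mathcal F_n$ to the compatible system $\mathcal F = (\mathcal F_n)_{n\geq 0}$.

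For the first assertion, I would take an arbitrary pair $h\colon W\to X$, $f\colon W\to Y$ of morphisms between smooth $k$-schemes with $h$ being $SS(\mathcal F)$-transversal and $f$ being $h^\circ SS(\mathcal F)$-transversal. Theorem \ref{thm:beilinsonsaito}(1) applied to each $\mathcal F_n \in D^b_{\rm ctf}(X, \mathcal O_E/\mathfrak m_E^{n+1})$ then yields universal local acyclicity of $f$ with respect to $h^*\mathcal F_n$. Since universal local acyclicity of an $\mathcal O_E$-adic complex is formulated level-wise on its finite quotients (in the sense of \cite[Expos\'e VI]{SGA5}), this gives universal local acyclicity of $f$ with respect to $h^*\mathcal F$, so $\mathcal F$ is micro-supported on $SS(\mathcal F)$.

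For the Milnor formula I would fix a pair $(j, f)$ and a point $u$ as in Definition \ref{Milnor}(2). By Definition \ref{def:ccforladic}, the right-hand side of (\ref{eqYZ:milnor}) for $\mathcal F$ agrees with $(j^*CC(\mathcal F_0), df)_{T^*U, u}$, so the problem reduces to the identity
\begin{equation*}
\dimtot\, \phi_u(j^*\mathcal F, f) \;=\; \dimtot\, \phi_u(j^*\mathcal F_0, f),
\end{equation*}
after which the Milnor formula for $\mathcal F_0$ supplied by Theorem \ref{thm:beilinsonsaito}(2) will close the argument. To establish this identity I would use that $\phi_u(j^*\mathcal F, f)$ is by construction the compatible system $(\phi_u(j^*\mathcal F_n, f))_n$, combined with the standard invariance of rank and of the Swan conductor under the reduction $-\otimes_{\mathcal O_E} k_E$ for continuous representations on free $\mathcal O_E$-modules of finite rank (cf.\ \cite[19.3]{Serre97}); these invariances together yield the equality of the two total dimensions.

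The main, albeit modest, obstacle is to verify carefully that the vanishing cycle functor commutes with the reduction $-\otimes^L_{\mathcal O_E}\mathcal O_E/\mathfrak m_E$ on compatible systems, so that $\dimtot$ genuinely descends from $\mathcal F$ to $\mathcal F_0$ and the Swan-conductor invariance can be invoked term by term. Once this compatibility is in place, the remainder of the proof is a formal transcription of the Beilinson--Saito theorems from the torsion setting to the $\ell$-adic one.
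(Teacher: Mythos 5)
Your proposal follows essentially the same two-step reduction as the paper: the micro-support claim is obtained by applying Beilinson's theorem at each finite level and passing to the limit (the paper spells this out via SGA4.5's characterization of local acyclicity and $\varprojlim_n$), and the Milnor formula is reduced to the identity $\dimtot\,\phi_u(j^*\mathcal F,f)=\dimtot\,\phi_u(j^*\mathcal F_0,f)$, which the paper proves in Subsection~\ref{sub:dimtotphi} by the same idea you sketch (representing the vanishing-cycle complex by a bounded complex of free $\mathcal O_E$-modules and using that rank and Swan conductor are invariant under reduction). The one place the paper is slightly more careful than your sketch is precisely the step you flag as the ``main obstacle'': it verifies the compatibility of vanishing cycles with the projective system and the perfectness of the finite-level complexes before invoking rank/Swan invariance, rather than asserting the level-wise formulation outright.
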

\begin{proof} Let $\mathcal F=(\mathcal F_n)_{n\geq0}$ be a projective system satisfying \eqref{eq:repforladic}. By Lemma \ref{reductionCC}, for any integer $n\geq 0$, the sheaf $\mathcal F_n$ is micro-supported on $SS(\mathcal F)$ and 
$CC(\mathcal F_n)=CC(\mathcal F)$ satisfies the Minor formula \eqref{eqYZ:milnor} for $\mathcal F_n$.

Let us show that $\mathcal F$ is micro-supported on $SS(\mathcal F)$. Let $(h, f)$ be a pair of morphisms $h\colon W\to X$ and $f\colon W\to Y$ between smooth schemes over $k$ such that $h$ is $SS(\mathcal F)$-transversal and $f$ is $h^\circ SS(\mathcal F)$-transversal. We need to show that $f$ is locally acyclic relatively to $h^\ast\mathcal F$.
By \cite[Th. finitude, D\'efinition 2.12]{SGA4h}, we need to show that the canonical morphism
\begin{equation}\label{eq:milnorforladic:1}
(h^\ast\mathcal F)_{\bar w}\to R\Gamma(W_{(\bar w)}\times_{Y_{(f(\bar w))}}\bar t,h^\ast\mathcal F|_{W_{(\bar w)}\times_{Y_{(f(\bar w))}}\bar t})
\end{equation}
is an isomorphism for all geometric points $\bar w\to W$ and $\bar t\to Y_{(f(\bar w))}$, where $W_{(\bar w)}$ is the strict localization of $W$ at $\bar w$. 
Since $f$ is locally acyclic relatively to $\mathcal F_n$ for all $n$, thus we have an isomorphism
\begin{equation}\label{eq:milnorforladic:2}
(h^\ast\mathcal F_n)_{\bar w}\to R\Gamma(W_{(\bar w)}\times_{Y_{(f(\bar w))}}\bar t,h^\ast\mathcal F_n|_{W_{(\bar w)}\times_{Y_{(f(\bar w))}}\bar t}).
\end{equation}
By definition, we have $(h^\ast\mathcal F)_{\bar w}\simeq \varprojlim_n (h^\ast\mathcal F_n)_{\bar w}$ and 
\begin{align*}R\Gamma(W_{(\bar w)}\times_{Y_{(f(\bar w))}}\bar t,h^\ast\mathcal F|_{W_{(\bar w)}\times_{Y_{(f(\bar y))}}\bar t})=\varprojlim_n R\Gamma(W_{(\bar w)}\times_{Y_{(f(\bar w))}}\bar t,h^\ast\mathcal F_n|_{W_{(\bar w)}\times_{Y_{(f(\bar y))}}\bar t}). 
\end{align*}
Applying $\varprojlim_n$ to \eqref{eq:milnorforladic:2}, we prove that the morphism \eqref{eq:milnorforladic:1} is also an isomorphism.

Now let us show that $CC(\mathcal F)$
satisfies the Minor formula \eqref{eqYZ:milnor} for $\mathcal F$. Let $(j, f)$ be any pair of an \'etale morphism $j:U\to X$ and a morphism $f:U\to Y$ to a smooth curve $Y$ over $k$.
Let $u\in U$ be a closed point such that $u$ is at most an isolated characteristic point of $f$ with respect to $j^\circ SS\F$. By the formula \eqref{eq:dimtotophireduction} of Subsection \ref{sub:dimtotphi} below, we have
\begin{align}
\label{eq:milnorforladic:3}
\dimtot (\phi_u(j^\ast\mathcal F,f))=\dimtot (\phi_u(j^\ast\mathcal F_0,f)).
\end{align}
Since we have Milnor formula for $\mathcal F_0\in D_c^b(X,\mathcal O_E/\mathfrak m_K)$ 
\begin{equation}
-\dimtot(\phi_u(j^\ast\mathcal F_0,f))= (CC(\mathcal F_0),df)_{T^\ast U,u},
\end{equation}
 by \eqref{eq:milnorforladic:3} we get
\begin{equation}
-\dimtot\phi_u(j^\ast\mathcal F,f)= (CC(\mathcal F_0),df)_{T^\ast U,u}\overset{\eqref{eq:def:ccforladic2}}{=}(CC(\mathcal F),df)_{T^\ast U,u}.
\end{equation}
This finishes the proof.
\end{proof}

\subsection{}\label{sub:dimtotphi}
Let $Y$ be a smooth scheme and $X$ a smooth curve over a perfect field $k$ of characteristic $p$.
Let $f\colon Y\to X$ be a morphism over $k$ and $\mathcal F\in D_c^b(Y,\bQl)$. We choose a finite extension $E$ of $\mathbb Q_\ell$ and an object  $\mathcal (\mathcal F_n)_{n\geq 0}\in D_c^b(Y,\mathcal O_E)$ satisfying \eqref{eq:repforladic}   such that $\mathcal F=(\mathcal F_n)$ in $D_c^b(Y,\bQl)$ (cf. \cite{Weil2}).
Then for any closed point $u\in Y$, we have
\begin{equation}\label{eq:dimtotophireduction}
\dimtot \phi_u(\mathcal F,f)=\dimtot \phi_u(\mathcal F_n,f) \qquad\forall n\geq 0.
\end{equation} 
The above result follows from the proof of the Grothendieck-Ogg-Shafarevich formula \cite[Expos\'e X, Th\'eor\`eme 7.1]{SGA5}. See also \cite[Corollary 10.2.7]{Leifu}.
Indeed, let $ K=\phi_u(\mathcal F,f)$ and $  K_n=\phi_u(\mathcal F_n,f)$. By \cite[Proposition 10.1.17 (iv)]{Leifu}, $( K_n)_{n\geq 0}$ is a projective system satisfying
\begin{align}
K_{n+1}\otimes^L_{\mathcal O_E/\mathfrak m^{n+2}}\mathcal O_E/\mathfrak m^{n+1}\simeq K_{n}
\end{align}
and $K_n$ is quasi-isomorphic to a bounded complex  of flat
$\mathcal O_E/\mathfrak m^{n+1}$-modules for all $n\geq 0$.
Then by \cite[Lemma 10.1.14]{Leifu},
we may represent  each $K_n$ by a bounded complex $L_n$ of free $\mathcal O_E/\mathfrak m^{n+1}$-modules of finite rank, and we have quasi-isomorphisms
\begin{equation}
L_{n+1}\otimes_{\mathcal O_E/\mathfrak m^{n+2}}\mathcal O_E/\mathfrak m^{n+1}\simeq L_n.
\end{equation}
By \cite[Proposition 10.1.15]{Leifu}, we may find a bounded complex $L$ of free $\mathcal O_E$-modules of finite ranks and quasi-isomorphisms $L/\pi_E^{n+1}L\to L_n$ such that the diagrams  ($n\geq 0$)
\begin{equation}
\begin{gathered}
\xymatrix{
L/\pi_E^{n+2}L\ar[d]\ar[r]&L_{n+1}\ar[d]\\
L/\pi_E^{n+1}L\ar[r]&L_n
}
\end{gathered}
\end{equation}
commute up to homotopy. 
Recall that for any local ring $A$ and  any bounded complex $G$ of projective $A$-modules, we have (applying the identity function to \cite[Rapport, (4.3.1)]{SGA4h}, see also \cite[Proposition 10.2.3]{Leifu})
\begin{align}
{\rm rank}G=\sum_{i}(-1)^i\cdot{\rm rank}G^i, \qquad[G]=\sum_{i}(-1)^i\cdot [G^i]  \quad{\rm in} ~K(D^b_{\rm perf}(\Lambda))\simeq K(\Lambda).
\end{align}
Then we have
\begin{align}
{\rm rank}K=\sum_i(-1)^i\cdot{\rm rank} (L^i\otimes_{\mathcal O_E}E)=\sum_{i}(-1)^i\cdot{\rm rank}(L^i/\pi_E^{n+1}L^i)={\rm rank} K_n.
\end{align}
Since $\dimtot=\Sw+\rank$ and the Swan conductor of a module is determined by its reduction(cf. \cite[1.10]{Katz88}  for a  nice explanation of this fact), we have
\begin{align}
{\rm dimtot}K=\sum_i(-1)^i\cdot{\rm dimtot} (L^i\otimes_{\mathcal O_E}E)=\sum_{i}(-1)^i\cdot{\rm dimtot}(L^i/\pi_E^{n+1}L^i)={\rm dimtot} K_n.
\end{align}
This finishes the proof of \eqref{eq:dimtotophireduction}.

\subsection{}\label{sub:FiniteAssum}
Let $k$ be a field of characteristic $p>0$ and $\bar k$ an algebraic closure of $k$.
We assume that $k$ satisfies the following condition:
\begin{itemize}
\item[(F)] For any finite extension $k^\prime\subseteq \bar k$ of $k$, the groups $H^i({\rm Gal}(\bar k/k^\prime),\mathbb Z/\ell\mathbb Z)$
are finite for all integers $i\geq 0$.
\end{itemize}
Then for any separated scheme $X$ of finite type over $k$, $D_c^b(X,\bQl)$ is a triangulated category with a $t$-structure (cf. \cite[2.2.15 and 2.2.16]{BBD}). Note that finite fields and algebraically closed fields satisfy the  assumption (F).

Similar to the definition of $D_c^b(X,\bQl)$, 
the triangulated  category $D_c^b(X,\overline{\mathbb F}_\ell)$ (respectively $D_c^b(X,\overline{\mathbb Z}_\ell)$) is defined as the direct limit 
\begin{align}\label{eq:dcbdef}
D_c^b(X,\overline{\mathbb F}_\ell)=``2{\rm-}\varinjlim" D_c^b(X,\Lambda)\qquad{\rm respectively}~
D_c^b(X,\overline{\mathbb Z}_\ell)=``2{\rm-}\varinjlim" D_c^b(X, \mathcal O_E)
\end{align}
where $\Lambda\subseteq\overline{\mathbb F}_\ell$ (respectively $E\subseteq \bQl$) ranges over all finite extensions of $\mathbb F_\ell$ (respectively $\mathbb Q_\ell$).  
For a finite extension $\Lambda\to \Lambda^\prime$ of finite fields over $\mathbb F_\ell$, the transition map $D_c^b(X,\Lambda)\to D_c^b(X,\Lambda^\prime)$ in  \eqref{eq:dcbdef}  is defined by $\mathcal F\mapsto \mathcal F\otimes_\Lambda\Lambda^\prime$. For the case $D_c^b(X, \mathcal O_E)$, we refer to \cite[p.331]{KW}.
For an object $\mathcal F$ in $D_c^b(X,\Lambda)$ (respectively $D_c^b(X,\mathcal  O_E)$),
we write $\mathcal F\otimes \bFl$ (respectively $\mathcal F\otimes \bZl$) when viewed as an object in $D_c^b(X,\overline{\mathbb F}_\ell)$ (respectively $D_c^b(X,\overline{\mathbb Z}_\ell)$).
Furthermore, 
\begin{align}
&Hom(\mathcal F\otimes\bFl,\mathcal G\otimes\bFl)=Hom(\mathcal F,\mathcal G)\otimes\bFl \\
&\nonumber\qquad\qquad{\rm respectively}~
Hom(\mathcal F\otimes\bZl,\mathcal G\otimes\bZl)=Hom(\mathcal F,\mathcal G)\otimes\bZl.
\end{align}
For $\bullet\in\{\bQl,\bZl,\bFl\}$,
we denote by $K(X,\bullet)$  the Grothendieck group of $D_c^b(X,\bullet)$ (cf. \cite[Expos\'e VIII]{SGA5} and \cite[Expos\'e IV]{SGA6}). Elements of $K(X,\bullet)$ will be called virtual $\bullet$-sheaves. Since there is an essentially surjective and exact functor $D_c^b(X,\bZl)\to D_c^b(X,\bQl)$, it induces a surjective map
\begin{equation}
K(X,\bZl)\to K(X,\bQl).
\end{equation}
\subsection{}\label{sub:Ggroupofbqlsheaves} 
For $\bullet\in\{\bQl,\bZl,\bFl\}$,
the group $K(X,\bullet)$  equals to the Grothendieck group of the abelian category of constructible $\bullet$-sheaves and  also equals to that of the abelian category 
${\rm Perv}(X,\bullet)$ of perverse $\bullet$-sheaves \cite[2.2.16-2.2.18]{BBD}. For any $K\in D_c^b(X,\bullet)$, we have
\[
[K]=\sum_{i}(-1)^i\cdot [\mathcal H^i(K)]=\sum_{j}(-1)^j\cdot [~^p\mathcal H^j(K)]\qquad{\rm in}~K(X,\bullet).
\]
The operation $\otimes^L$ induces a product ``$\cdot$'' on $K(X,\bullet)$. The functors $Rf_\ast, f^\ast, Rf_!$ and $ Rf^!$ induce  group homomorphisms between corresponding Grothendieck groups, which will also  be denoted by $Rf_\ast, f^\ast, Rf_!$ and $Rf^!$ respectively.

We briefly recall the definition of constructible $\bullet$-sheaves for $\bullet\in \{\bQl,\bZl\}$.
Let $E$ be a finite extension of $\mathbb Q_\ell$.
Recall that a constructible $\pi_E$-adic sheaf $\mathcal F$ is a projective system $\mathcal F=(\mathcal F_n)_{n\geq  0}$ with the following conditions:
\begin{itemize}
\item[(a)] For any $n\geq 0$, $\mathcal F_n$ is a constructible sheaf of $\mathcal O_E/\mathfrak m^{n+1}$-modules.
\item[(b)] The transition morphism  $\mathcal F_{n+1}\to \mathcal F_n$ induces an isomorphism 
\begin{equation}
\mathcal F_{n+1}\otimes_{\mathcal O_E/\mathfrak m^{n+1}}\mathcal O_E/\mathfrak m^{n}\xrightarrow{~\simeq~}\mathcal F_n, \qquad{\rm for~any}~n\geq0.
\end{equation}
\end{itemize}
A constructible $\pi_E$-adic sheaf $\mathcal F$ is called torsion-free if the map $ \mathcal F\xrightarrow{\pi_E} \mathcal F$ is A-R injective (cf. \cite[Corollary 10.1.9]{Leifu}). This implies that  the stalks of the sheaves $\mathcal F_n$ are flat $\mathcal O_E/\mathfrak m^{n+1}$-modules and the stalks $\mathcal F_{\bar x}$ of $\mathcal F$ are finitely generated torsion-free $\mathcal O_E$-modules.
Any constructible $\bQl$-sheaf is isomorphic to $\mathcal F\otimes\bQl$ for some torsion-free constructible $\pi_E$-adic sheaves $\mathcal F$ (cf. \cite[p. 328]{KW}). 

Let $E$ be a finite extension  of $\bQl$.
Any torsion-free constructible $\pi_E$-adic sheaf $\mathcal F=(\mathcal F_n)_{n\geq 0}$ can be viewed as an element in $D_c^b(X,\mathcal O_E)$. Indeed,
since  $\mathcal F_n$ is $\mathcal O_E/\mathfrak m_E^{n+1}$ flat, $\mathcal F_n$ can be considered as a complex
\begin{align*}
\to 0\to\cdots\to 0\to\mathcal F_n\to 0\to\cdots\to
\end{align*}
concentrated in degree 0. The projective system associated to these complexes is contained in $D_c^b(X,\mathcal O_E)$. But this construction fails if $\mathcal F$ is not torsion-free (cf. \cite[II.6, Lemma 6.2]{KW}).
\subsection{}\label{sub:reductionsheaf}
Let $E$ be a finite extension  of $\mathbb Q_\ell$,  the functor $-\otimes^L_{\mathcal O_E}\mathcal O_E/\mathfrak m_E$ defines a group homomorphism
\begin{align}\label{eq:cdethm:00}
 K(X,\mathcal O_E)\to K(X,\mathcal O_E/\mathfrak m_E)
\end{align}
The above group homomorphism is compatible with field extensions and it induces a group homomorphism (called the reduction map)
\begin{align}\label{eq:cdethm:01}
 K(X,\bZl)\to K(X,\bFl).
\end{align}
For any $\mathcal F\in  K(X,\bZl)$, we denote by $\overline{\mathcal F}$ the image of $\mathcal F$ under the reduction map \eqref{eq:cdethm:01}.
By (\cite[Theorem 6.3 iii]{Eke90} and \cite[1.1.2 c]{Weil2}), the operations $Rf_\ast$,$f^\ast$, $Rf_!$ and $Rf^!$  commute with reduction, i.e., for any morphism $f\colon X\to Y$ between separated schemes of finite type over $k$, we have commutative diagrams 
\begin{align}
\begin{gathered}\label{eq:cdethm:1}
\xymatrix{
K(X,\bZl)\ar[r] \ar[d]_-{Rf_\ast}&K(X,\bFl)\ar[d]^-{Rf_\ast} & K(X,\bZl)\ar[r] \ar[d]_-{Rf_!}&K(X,\bFl)\ar[d]^-{Rf_!}\\
K(Y,\bZl)\ar[r]& K(Y,\bFl)&K(Y,\bZl)\ar[r]& K(Y,\bFl)\\
K(Y,\bZl)\ar[r] \ar[d]_-{f^\ast}&K(Y,\bFl)\ar[d]^-{f^\ast}&K(Y,\bZl)\ar[r] \ar[d]_-{Rf^!}&K(Y,\bFl)\ar[d]^-{Rf^!}\\
K(X,\bZl)\ar[r]& K(X,\bFl)&K(X,\bZl)\ar[r]& K(X,\bFl)
}
\end{gathered}
\end{align}

\subsection{} Let $X$ be a smooth scheme purely of dimension $d$ over a perfect field $k$ satisfying the assumption (F) of Subsection \ref{sub:FiniteAssum}. 
By the additivity of characteristic cycle \cite[Lemma  5.13]{Saito}, the map $\mathcal F\mapsto CC\mathcal F$ defines a  group homomorphism
$CC\colon K(X,\bFl)\to Z_d(T^\ast X)$, where $Z_d(T^\ast X)$ is the group of $d$-cycles on $T^\ast X$.
By \eqref{eq:def:ccforladic2},  we also have a well-defined group homomorphism $CC\colon K(X,\bZl)\to Z_d(T^\ast X)$ with a commutative diagram 
\begin{align}
\begin{gathered}
\xymatrix{
K(X,\bZl)\ar[rd]_-{\rm CC}\ar[r]& K(X,\bFl)\ar[d]^-{\rm CC}\\
&Z_d(T^\ast X)
}
\end{gathered}
\end{align}

\subsection{}\label{sub:defidecomp}
Let $X$ be a separated scheme of finite type  over a field $k$ satisfying the condition (F)  of Subsection \ref{sub:FiniteAssum}.
Following a suggestion of  T. Saito, we construct a group homomorphism
\begin{align}d_X\colon K(X,\bQl)\to K(X,\bFl),
\end{align}
which is similar to the decomposition homomorphism  defined in \cite[15.2]{Serre97} (cf. Subsection \ref{sub:serredecomporecall} for a briefly recall of the definition).
By \ref{sub:Ggroupofbqlsheaves}, 
the group $K(X,\bQl)$   equals to the Grothendieck group of the abelian category of constructible $\bQl$-sheaves.
So we only need to define $d_X$ for  constructible $\bQl$-adic sheaves. Now let  $\mathcal F$ be a constructible $\bQl$-adic sheaf. 
We choose a finite extension  $E$ of $\mathbb Q_\ell$ and a (torsion-free) $\pi_E$-adic sheaf $\mathcal G$ on $X$ such that $\mathcal F\simeq \mathcal G\otimes_{\mathcal O_E}\bQl$ (cf. \cite[p. 328]{KW}). Then we define $d_X(\mathcal F)\coloneqq\overline{\mathcal G}$ (cf. \ref{sub:reductionsheaf}), which is independent of the choice  of $E$ and $\mathcal G$ by  Lemma \ref{lem:welldefineddecom} below. By linearity, this defines a group homomorphism $d_X\colon K(X,\bQl)\to K(X,\bFl)$, which will be called the decomposition homomorphism on $X$.

In general, for $\mathcal F\in D_c^b(X,\bQl)$, 
we choose  a finite extension $E$ of $\mathbb Q_\ell$  and $\mathcal G\in D_c^b(X,\mathcal O_E)$ such that $\mathcal F=\mathcal G\otimes\bQl$. We write $\mathcal G=\sum_{i}a_i\cdot \mathcal G_i\in K(X,\mathcal O_E)  (a_i\in\mathbb Z)$ with $\mathcal G_i$   constructible $\pi_E$-adic sheaves.
Then we have
\begin{align}\label{eq:defofdecomphomoofschemes}
d_X(\mathcal F)=\sum_{i}a_i\cdot \overline{\mathcal G_i}=\sum_{i}a_i\cdot [\mathcal G_i\otimes_{\mathcal O_E}^L\mathcal O_E/\mathfrak m_E]= \mathcal G\otimes_{\mathcal O_E}^L\mathcal O_E/\mathfrak m_E=\overline{\mathcal G}\in K(X,\bFl).
\end{align}
\begin{lemma}\label{lem:welldefineddecom}
Let $E$ and $E^\prime$ be  finite extensions of $\mathbb Q_\ell$.  Let $\mathcal F$ (respectively $\mathcal G$) be   a constructible $\pi_E$-adic (respectively $\pi_{E^\prime}$-adic)
 sheaf on $X$. If $\mathcal F\otimes_{\mathcal O_{E}}\bQl\simeq \mathcal G\otimes_{\mathcal O_{E^\prime}}\bQl$ as $\bQl$-sheaves, then we have
\begin{equation}\label{eq:serrepf0-1}
\overline{\mathcal F}=\overline{\mathcal G}\qquad{\rm in}~K(X,\bFl).
\end{equation}
\end{lemma}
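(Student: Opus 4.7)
The plan is to reduce to a comparison of two $\mathcal O_E$-lattices inside a single $E$-sheaf and then run a short devissage argument analogous to Serre's proof that the decomposition homomorphism on representations is well-defined (cf. \cite[15.2]{Serre97}).

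First, I would pick a finite extension $L/\mathbb{Q}_\ell$ containing both $E$ and $E'$ inside $\bQl$ and pass to the base changes $\mathcal F_L := \mathcal F\otimes_{\mathcal O_E}\mathcal O_L$ and $\mathcal G_L := \mathcal G\otimes_{\mathcal O_{E'}}\mathcal O_L$, which are $\pi_L$-adic sheaves with $\mathcal F_L\otimes_{\mathcal O_L}\bQl\simeq \mathcal G_L\otimes_{\mathcal O_L}\bQl$. Flatness of $\mathcal O_L$ over $\mathcal O_E$ gives
\begin{equation*}
\mathcal F_L\otimes^L_{\mathcal O_L}k_L \;=\; \mathcal F\otimes^L_{\mathcal O_E}k_L \;=\; \bigl(\mathcal F\otimes^L_{\mathcal O_E}k_E\bigr)\otimes_{k_E}k_L ,
\end{equation*}
and since $k_L/k_E$ is separable, the resulting images in $K(X,\bFl)$ coincide, yielding $\overline{\mathcal F_L}=\overline{\mathcal F}$ (and similarly for $\mathcal G$). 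This lets me assume $E=E'$.

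Next, the hypothesis supplies an isomorphism $\mathcal F\otimes_{\mathcal O_E} E\xrightarrow{\sim} \mathcal G\otimes_{\mathcal O_E} E$. Using $\mathrm{Hom}(\mathcal F\otimes E, \mathcal G\otimes E) = \mathrm{Hom}(\mathcal F,\mathcal G)\otimes_{\mathcal O_E} E$, multiplying by a sufficiently large power of $\pi_E$ gives a morphism $\phi\colon \mathcal F\to\mathcal G$ of constructible $\pi_E$-adic sheaves whose generic fiber is still an isomorphism. Consequently $\ker\phi$ and $\mathrm{coker}\,\phi$ are torsion $\pi_E$-adic sheaves, annihilated by some $\pi_E^N$.

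The crux is to show that any such torsion $\pi_E$-adic sheaf $\mathcal T$ has $\overline{\mathcal T}=0$ in $K(X,\bFl)$. By the filtration $\mathcal T\supseteq \pi_E\mathcal T\supseteq\cdots\supseteq \pi_E^N\mathcal T=0$, whose successive quotients are killed by $\pi_E$, it suffices to treat the case $\pi_E\mathcal T=0$; then the two-term free resolution $\mathcal O_E\xrightarrow{\pi_E}\mathcal O_E$ of $k_E$ yields $\mathcal T\otimes^L_{\mathcal O_E}k_E\simeq \mathcal T\oplus\mathcal T[1]$, so $[\overline{\mathcal T}]=[\mathcal T]-[\mathcal T]=0$. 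Applying the additivity of reduction to
\begin{equation*}
0\to \ker\phi\to\mathcal F\to \mathrm{im}\,\phi\to 0,\qquad 0\to \mathrm{im}\,\phi\to \mathcal G\to \mathrm{coker}\,\phi\to 0
\end{equation*}
then gives $\overline{\mathcal F}=\overline{\mathrm{im}\,\phi}=\overline{\mathcal G}$, as required.

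The main obstacle I anticipate is the bookkeeping around the category of constructible $\pi_E$-adic sheaves: one has to check that $\ker\phi$ and $\mathrm{coker}\,\phi$ really are (genuine, up to Artin--Rees equivalence) constructible $\pi_E$-adic sheaves, that a torsion $\pi_E$-adic sheaf is canonically a constructible sheaf of $\mathcal O_E/\pi_E^N$-modules with a well-defined class in $K(X,\bZl)$, and that reduction is additive on short exact sequences. These foundational points are settled in \cite{Weil2,KW}, but they must be invoked with some care because, as noted in \ref{sub:Ggroupofbqlsheaves}, non-torsion-free $\pi_E$-adic sheaves cannot be placed into $D_c^b(X,\mathcal O_E)$ in the naive way.
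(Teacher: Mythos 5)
Your proposal is correct and takes a genuinely different, arguably more structural route than the paper. You reduce to $E=E'$ the same way, but then you clear denominators to produce an actual morphism $\phi\colon\mathcal F\to\mathcal G$ of $\pi_E$-adic sheaves with torsion kernel and cokernel, and you observe that every torsion constructible $\pi_E$-adic sheaf reduces to zero in $K(X,\bFl)$ (via the $\mathcal O_E\xrightarrow{\pi_E}\mathcal O_E$ resolution, after filtering by powers of $\pi_E$). The paper instead performs a devissage to the case $\mathcal F=j_!\mathcal K$, $\mathcal G=j_!\mathcal L$ with $\mathcal K,\mathcal L$ smooth on a connected locally closed $U$, passes through the fibre functor to finitely generated $\mathcal O_E$-modules with $\pi_1(U,\bar x)$-action, arranges a lattice inclusion $\pi_E^n\mathcal L_{\bar x}\subseteq\mathcal K_{\bar x}\subseteq\mathcal L_{\bar x}$, and then runs Serre's induction on $n$; the $n=1$ step there is exactly your Tor computation (the four-term exact sequence $0\to M\to\mathcal K\otimes k_E\to\mathcal L\otimes k_E\to M\to 0$), so the central observation is the same in both. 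What the paper's devissage buys is that after reducing to $\pi_1$-modules all the foundational questions you flag --- that the category of constructible $\pi_E$-adic sheaves modulo Artin--Rees torsion is abelian, that $\mathrm{Hom}(\mathcal F\otimes E,\mathcal G\otimes E)=\mathrm{Hom}(\mathcal F,\mathcal G)\otimes_{\mathcal O_E}E$ holds for this localization, that $\ker\phi$ and $\mathrm{coker}\,\phi$ are again constructible $\pi_E$-adic sheaves killed by a bounded power of $\pi_E$, and that reduction is additive on short exact sequences of possibly non-torsion-free $\pi_E$-adic sheaves even though these cannot be placed naively into $D_c^b(X,\mathcal O_E)$ --- become transparent, since for finitely generated $\mathcal O_E[\pi_1]$-modules they are elementary. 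Your version gains conceptual directness and dispenses with the induction on $n$, but to be complete it would need to invoke these facts about $\pi_E$-adic sheaves explicitly (they are in \cite{Weil2,KW}), rather than just noting they are the main obstacle.
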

\begin{proof}Let $E^{\prime\prime}$ be a finite extension of $\mathbb Q_\ell$ such that $E\subseteq E^{\prime\prime}$ and 
$E^{\prime}\subseteq E^{\prime\prime}$. 
Let $\mathcal F^\prime$ and $\mathcal G^\prime$ be the images in $D_c^b(X,\mathcal O_{E^{\prime\prime}})$ of $\mathcal F$ and $\mathcal G$  respectively. 
Then
$\overline{\mathcal F}=\overline{\mathcal G}\in K(X,\bFl)$ if and only if $\overline{\mathcal F^\prime}=\overline{\mathcal G^\prime}\in K(X,\bFl)$.
Thus we may assume that $E=E^\prime$.  Let $\mathcal O_E$ be the ring of integers of $E$ and let $k_E$ be the residue field of $\mathcal O_E$.

By devissage, we can reduce  to the case where $\mathcal F=j_!\mathcal K$ and $\mathcal G=j_!\mathcal L$ for a locally closed immersion $j\colon U\to X$ and smooth $\pi_E$-adic sheaves $\mathcal K$ and $\mathcal L$ on $U$.  We may assume $U$ is connected.
Let $x\in U$ be a closed point.
By (\cite[Expos\'e VI, Proposition 1.2.5]{SGA5} or \cite[Proposition 10.1.23]{Leifu}), the fiber functor $\mathcal H\mapsto \mathcal H_{\bar x}$ defines an equivalence between the category of smooth  $\pi_E$-adic sheaves  on $U$ and the category of finitely generated $\mathcal O_E$-modules with continuous $\pi_1(U,\bar x)$-actions. 

Now we use a similar  argument with \cite[Theorem 32]{Serre97}. First, we assume that $\mathcal K_{\bar x}$ and $\mathcal L_{\bar x}$ are torsion-free $\mathcal O_E$-modules.
Since $\mathcal K_{\bar x}\otimes\bQl\simeq \mathcal L_{\bar x}\otimes\bQl$, replacing $\mathcal K_{\bar x}$ by a scalar multiple (which does not effect the reduction) and extending the field $E$ if necessarly, 
we can assume that 
$\mathcal K_{\bar x}$ is contained in $\mathcal L_{\bar x}$. Thus there exists an integer $n\geq 0$ such that
\begin{equation}\label{eq:serrepf0}
\pi_E^n \mathcal L_{\bar x}\subseteq \mathcal K_{\bar x}\subseteq \mathcal L_{\bar x}.
\end{equation}
Now we prove \eqref{eq:serrepf0-1} by induction on $n$.

If $n=1$. Let $M$ be the $k_E$-module $\mathcal L_{\bar x}/\mathcal K_{\bar x}$ with continuous  $\pi_1(U,\bar x)$-actions. 
By \cite[Example 3.1.7, p.68]{weibel}, we have ${\rm Tor}_1^{\mathcal O_E}(k_E,M)=M$ and  ${\rm Tor}_1^{\mathcal O_E}(k_E,\mathcal L_{\bar x})=0$ since $\mathcal L_{\bar x}$ is a torsion-free $\mathcal O_E$-module. So the short exact sequence $0\to \mathcal K_{\bar x}\to \mathcal L_{\bar x}\to M\to 0$ induces an exact sequence
\begin{equation}
0\to M\to \mathcal K_{\bar x}\otimes_{\mathcal O_E}k_E\to \mathcal L_{\bar x}\otimes_{\mathcal O_E}k_E\to M\to 0.
\end{equation}
Let $\mathcal M$ be the locally constant and constructible sheaf of $k_E$-modules on $U$ which corresponds to the representation $M$ of $\pi_1(U)$.
Then we get an exact sequence of smooth sheaves of $k_E$-modules on $U$:
\begin{equation}\label{eq:serrepf1}
0\to \mathcal M\to \mathcal K\otimes_{\mathcal O_E}k_E\to \mathcal L\otimes_{\mathcal O_E}k_E\to \mathcal M\to 0.
\end{equation}
Passing to $K(X,\bFl)$ we have
\begin{equation}
{j_!\mathcal M}-\overline{j_! \mathcal K}+ \overline{j_!\mathcal L}-{j_!\mathcal M}=0.
\end{equation}
Thus $\overline{j_!\mathcal K}=\overline{j_!\mathcal L}$ in $K(X,\bFl)$ which proves the result in the case $n=1$.

For $n\geq 2$, we put $T=\pi_E^{n-1}\cdot \mathcal L_{\bar x}+\mathcal K_{\bar x}$. By \eqref{eq:serrepf0}, we then have 
\begin{equation}
\pi_E^{n-1}\cdot \mathcal L_{\bar x}\subseteq T\subseteq \mathcal L_{\bar x} \qquad{\rm and}\qquad 
\pi_E\cdot T\subseteq \mathcal K_{\bar x}\subseteq T.
\end{equation}
Let $\mathcal T$ be the  smooth and torsion-free $\pi_E$-adic sheaf on $U$ corresponding to $T$.
By the case $n=1$ and  induction, we get
\begin{equation}
\overline{j_!\mathcal L}=\overline{j_!\mathcal T}=\overline{j_!\mathcal K}.
\end{equation}

If $\mathcal K_{\bar x}$ or $\mathcal L_{\bar x}$ is not torsion-free, without loss of generality, we assume  $\mathcal K_{\bar x}$ is not torsion-free. Then we write $\mathcal K_{\bar x}=F\oplus T$, where $F$ is a  torsion-free $\mathcal O_E$-module with continuous $\pi_1(U,\bar x)$-action and $T$ is a torsion $\mathcal O_E$-module such that $\pi_E^n T=0$ for some integer $n> 0$. Then we only need to show that $T\otimes^L_{\mathcal O_E}k_E=0$ in $K(\pi_1(U,\bar x), k_E)$.

Let $P$ be a torsion-free and finitely generated $\mathcal O_E$-module with continuous $\pi_1(U,\bar x)$-action such that
we have a surjection $P\to T$. Let $Q$ be the kernel of $P\to T$. Then $Q$ is also torsion-free. Since $T=P/Q$ is killed by $\pi_E^n$, we have 
\begin{equation}
\pi_E^n\cdot P\subseteq Q\subseteq P.
\end{equation}
This condition is the same as \eqref{eq:serrepf0}.
By the same proof above, we have $[P\otimes_{\mathcal O_E}k_E]=[Q\otimes_{\mathcal O_E}k_E]$ in $K(\pi_1(U,\bar x), k_E)$, thus $[T\otimes^L_{\mathcal O_E}k_E]=[P\otimes_{\mathcal O_E}k_E]-[Q\otimes_{\mathcal O_E}k_E]=0$ in $K(\pi_1(U,\bar x), k_E)$. 
We finish the proof.
\end{proof}

\begin{theorem}\label{thm:cdeforetalesheaf}
Let $k$ be a perfect field  satisfying the condition (F)  of Subsection \ref{sub:FiniteAssum}.

$(1)$ 
For any  morphism $f\colon X\to Y$ between separated schemes of finite type over  $k$, we have  commutative diagrams
\begin{align}
\begin{gathered}\label{eq:cdethm:1-1}
\xymatrix{
K(X,\bQl)\ar[r]^-{d_X} \ar[d]_-{Rf_\ast}&K(X,\bFl)\ar[d]^-{Rf_\ast} & K(X,\bQl)\ar[r]^-{d_X} \ar[d]_-{Rf_!}&K(X,\bFl)\ar[d]^-{Rf_!}\\
K(Y,\bQl)\ar[r]^-{d_Y}& K(Y,\bFl)&K(Y,\bQl)\ar[r]^-{d_Y}& K(Y,\bFl)\\
K(Y,\bQl)\ar[r]^-{d_Y} \ar[d]_-{f^\ast}&K(Y,\bFl)\ar[d]^-{f^\ast}&K(Y,\bQl)\ar[r]^-{d_Y} \ar[d]_-{Rf^!}&K(Y,\bFl)\ar[d]^-{Rf^!}\\
K(X,\bQl)\ar[r]^-{d_X}& K(X,\bFl)&K(X,\bQl)\ar[r]^-{d_X}& K(X,\bFl)
}
\end{gathered}
\end{align}

$(2)$ If $X$ is a smooth scheme over  $k$ and if $\mathcal F\in D_c^b(X,\bQl)$, then $\mathcal F$ is micro-supported on $SS(d_X(\mathcal F))$.

$(3)$  If $X$ is a smooth scheme purely of dimension $d$ over $k$ and if $\mathcal F\in D_c^b(X,\bQl)$, then $CC(d_X(\mathcal F))$
satisfies the Minor formula \eqref{eqYZ:milnor} for $\mathcal F$.
\end{theorem}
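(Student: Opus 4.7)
The theorem splits into three parts; the key inputs for all of them are the compatibility of the four standard operations with reduction modulo $\mathfrak{m}_E$ at the integral level (diagram \eqref{eq:cdethm:1}), together with Proposition \ref{thm:milnorforladic}. The overall strategy is to lift every computation to $\mathcal{O}_E$-coefficients, where the desired properties are already available.

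For Part (1), given $\mathcal{F}\in K(X,\bQl)$, the plan is to choose a finite extension $E/\mathbb{Q}_\ell$ and $\mathcal{G}\in D_c^b(X,\mathcal{O}_E)$ with $\mathcal{G}\otimes\bQl=\mathcal{F}$, so that by \eqref{eq:defofdecomphomoofschemes} we have $d_X(\mathcal{F})=\overline{\mathcal{G}}$. Let $\Phi$ denote any of $Rf_\ast,Rf_!,f^\ast,Rf^!$. Since these operations are defined at the $\pi_E$-adic level and commute with the flat localization $-\otimes_{\mathcal{O}_E}\bQl$ (as recalled in \ref{sub:Ggroupofbqlsheaves}--\ref{sub:reductionsheaf}), one has $\Phi(\mathcal{F})=\Phi(\mathcal{G})\otimes\bQl$, and therefore $d_Y(\Phi\mathcal{F})=\overline{\Phi(\mathcal{G})}$. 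On the other hand, the corresponding square in \eqref{eq:cdethm:1} gives $\Phi(d_X\mathcal{F})=\Phi(\overline{\mathcal{G}})=\overline{\Phi(\mathcal{G})}$. The two sides agree, which establishes Part (1); independence from the choice of $(E,\mathcal{G})$ is absorbed by Lemma \ref{lem:welldefineddecom}.

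For Part (2), write again $\mathcal{F}=\mathcal{G}\otimes\bQl$ with $\mathcal{G}=(\mathcal{G}_n)\in D_c^b(X,\mathcal{O}_E)$. By Definition \ref{def:ccforladic} and Lemma \ref{reductionCC}, one has $SS(d_X\mathcal{F})=SS(\overline{\mathcal{G}})=SS(\mathcal{G})$, and by Proposition \ref{thm:milnorforladic}, $\mathcal{G}$ is micro-supported on this set. The idea is to descend this property to $\mathcal{F}$: for a pair $(h,f)$ as in Definition \ref{Milnor}(1), I apply the exact functor $-\otimes_{\mathcal{O}_E}\bQl$ to the local acyclicity isomorphism \eqref{eq:milnorforladic:1} for $\mathcal{G}$. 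Since stalks and $R\Gamma$ on the Milnor tubes commute with this flat base change (as in the closing steps of the proof of Proposition \ref{thm:milnorforladic}), the corresponding morphism for $h^\ast\mathcal{F}$ is still an isomorphism, giving micro-support on $SS(d_X\mathcal{F})$.

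For Part (3), Definition \ref{def:ccforladic} gives $CC(d_X\mathcal{F})=CC(\overline{\mathcal{G}})=CC(\mathcal{G})$, and Proposition \ref{thm:milnorforladic} says this cycle satisfies the Milnor formula for $\mathcal{G}$. The remaining ingredient is invariance of the total dimension of vanishing cycles under the passage $\mathcal{G}\rightsquigarrow\mathcal{G}\otimes\bQl$, which is exactly the identity \eqref{eq:dimtotophireduction}. Combining these yields, for any pair $(j,f)$ and isolated characteristic point $u$ as in Definition \ref{Milnor}(2),
\[
-\dimtot\phi_u(j^\ast\mathcal{F},f)=-\dimtot\phi_u(j^\ast\mathcal{G},f)=(CC(\mathcal{G}),df)_{T^\ast U,u}=(CC(d_X\mathcal{F}),df)_{T^\ast U,u},
\]
which is the desired Milnor formula. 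The main obstacle is not any single deep step but careful bookkeeping across the tower $D_c^b(\cdot,\mathcal{O}_E/\mathfrak{m}_E^{n+1})\leftsquigarrow D_c^b(\cdot,\mathcal{O}_E)\leftsquigarrow D_c^b(\cdot,\bQl)$, and in particular the verification that each of the four operations genuinely commutes with $-\otimes_{\mathcal{O}_E}\bQl$; all substantive inputs are already in place in Lemma \ref{reductionCC}, Proposition \ref{thm:milnorforladic}, and \eqref{eq:dimtotophireduction}, so no new geometric content is needed.
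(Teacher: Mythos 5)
Your proposal is correct and follows essentially the same route as the paper: part (1) by lifting to $\mathcal O_E$-coefficients and using that the four operations commute with reduction modulo $\mathfrak m_E$ (the paper cites Ekedahl for this), and parts (2) and (3) by invoking Proposition \ref{thm:milnorforladic} together with the invariance \eqref{eq:dimtotophireduction} of $\dimtot$ of vanishing cycles. The only difference is cosmetic: you spell out the descent of local acyclicity along $-\otimes_{\mathcal O_E}\bQl$ in part (2), which the paper leaves implicit in its one-line deduction.
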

\begin{proof}
(1) 
follows from the fact that the operations $Rf_\ast$, $f^\ast$, $Rf_!$ and $Rf^!$  commute with reduction (cf.  \cite[Theorem 6.3 iii]{Eke90} and \cite[1.1.2 c]{Weil2}). Indeed, for $\mathcal G\in D_c^b(X,\bQl)$, 
we choose  a finite extension $E$ of $\mathbb Q_\ell$  and $\mathcal F=(\mathcal F_n)_{n\geq 0}\in D_c^b(X,\mathcal O_E)$ such that $\mathcal G=\mathcal F\otimes\bQl$.  By \cite[Theorem 6.3]{Eke90}, we have $Rf_\ast\mathcal F\in D_c^b(Y,\mathcal O_E)$ and $(Rf_\ast\mathcal F)\otimes^L_{\mathcal O_E}\mathcal O_E/\mathfrak m_E\simeq Rf_\ast(\mathcal F\otimes^L_{\mathcal O_E}\mathcal O_E/\mathfrak m_E)$. Thus
\begin{align}
d_Y(Rf_\ast(\mathcal G))\overset{\eqref{eq:defofdecomphomoofschemes}}{=}(Rf_\ast\mathcal F)\otimes^L_{\mathcal O_E}\mathcal O_E/\mathfrak m_E= Rf_\ast(\mathcal F\otimes^L_{\mathcal O_E}\mathcal O_E/\mathfrak m_E)\overset{\eqref{eq:defofdecomphomoofschemes}}{=}Rf_\ast (d_X(G)).
\end{align}
This proves the commutativity for $Rf_\ast$. Other diagrams can be checked similarly.

(2) follows from Proposition \ref{thm:milnorforladic}, and (3) follows from \eqref{eq:dimtotophireduction} and Proposition \ref{thm:milnorforladic}.
\end{proof}

\begin{definition}\label{def:ccforbql}
Let $k$ be a perfect field of characteristic $p$ satisfying the assumption (F) of Subsection \ref{sub:FiniteAssum}.
Let $X$ be a smooth scheme purely of dimension $d$ over $k$ and $\mathcal F\in D_c^b(X,\bQl)$. Then we define the characteristic cycle $CC(\mathcal F)$ $($respectively fake singular support $SS(\mathcal F))$ of $\mathcal F$ to be
\begin{equation}\label{eq:def:ccforbql}
CC(\mathcal F)=CC(d_X(\mathcal F)), \qquad{\rm(respectively} ~SS(\mathcal F)=SS(d_X(\mathcal F)){\rm)}.
\end{equation}
By the additivity of characteristic cycle \cite[Lemma  5.13]{Saito}, the map $\mathcal F\mapsto CC\mathcal F$ defines a  group homomorphism
$CC\colon K(X,\bQl)\to Z_d(T^\ast X)$.
\end{definition}
The characteristic cycle is compatible with pull-back by any properly $SS\F$-transversal morphism.
\begin{lemma}[{\cite[Theorem 7.6]{Saito}}]\label{pull-back}
Let $X$ be a smooth scheme purely of dimension $d$ over a perfect field $k$ of characteristic $p$ and $\F\in D_c^b(X,\Lambda)$.
If $\Lambda=\bQl$, we further assume $k$ satisfies the condition (F) of Subsection \ref{sub:FiniteAssum}.
Let $W$ be a smooth scheme purely of dimension $m$ over $k$.
Let $h\colon W\to X$ be a properly $SS\F$-transversal morphism. 
Then we have
\begin{equation}\label{eqYZ:ccpb}
CCh^*\F=h^!CC\F,
\end{equation}
where $h^!CC\F$ is defined to be $(-1)^{m-d}\cdot h^* CC\F=(-1)^{m-d}\cdot dh_*(\pr_h^!CC\F)$ $($cf. \eqref{eq:pullbackCCdef}$)$ and $\pr_h \colon T^\ast X\times_XW\to T^\ast X$ is the first projection.
\end{lemma}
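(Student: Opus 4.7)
The plan is to reduce the assertion to the finite-coefficient case, which is exactly the content of \cite[Theorem 7.6]{Saito}. Both sides of \eqref{eqYZ:ccpb} are additive in $\F$: the left-hand side by the additivity of the characteristic cycle (cf. \cite[Lemma 5.13]{Saito}), the right-hand side by the additivity of the refined Gysin map $\pr_h^!$ combined with the push-forward $dh_*$ (cf. \cite[Chap.~6]{Fulton}). Hence it suffices to treat $\F$ in each of the Grothendieck groups $K(X,\Lambda)$ separately, and one may reduce to a single sheaf.

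First I would dispose of the case where $\Lambda$ is a finite field. If $\Lambda$ is a finite extension of $\mathbb F_\ell$, \eqref{eqYZ:ccpb} is \cite[Theorem 7.6]{Saito} applied to the finite local ring $\Lambda$. If $\Lambda = \bFl$, then $\F$ descends to some $D_c^b(X,\Lambda_0)$ with $\Lambda_0/\mathbb F_\ell$ finite, and both sides are unchanged by the extension of scalars $\Lambda_0 \to \bFl$ (since $SS$ and $CC$ are defined pointwise on $T^*X$), so the statement reduces to the finite-field case.

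For the case $\Lambda = \bQl$ (requiring the condition (F) on $k$), the key tool is the decomposition homomorphism $d_X\colon K(X,\bQl)\to K(X,\bFl)$ of \ref{sub:defidecomp}. By Definition \ref{def:ccforbql} we have $SS(\F)=SS(d_X\F)$ and $CC(\F)=CC(d_X\F)$. In particular the assumption that $h$ is properly $SS(\F)$-transversal is equivalent to $h$ being properly $SS(d_X\F)$-transversal, so the $\bFl$-case applied to $d_X\F$ yields
\begin{equation}\label{eq:plan1}
CC(h^*(d_X\F)) = h^!\,CC(d_X\F) = h^!\,CC(\F).
\end{equation}
On the other hand, the commutativity of $d_\bullet$ with $h^*$, established in Theorem \ref{thm:cdeforetalesheaf}(1), gives $h^*(d_X\F)=d_W(h^*\F)$ in $K(W,\bFl)$. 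Applying Definition \ref{def:ccforbql} again one has $CC(h^*\F) = CC(d_W(h^*\F)) = CC(h^*(d_X\F))$, which combined with \eqref{eq:plan1} yields the desired identity.

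The main obstacle is really the bookkeeping: one must check that $h^!$, defined at the level of cycles, is compatible with the various linearity operations used above. This boils down to the additivity of refined Gysin pullback, the additivity of $CC$, and the additivity of $d_X$; all three are routine but need to be verified simultaneously to avoid circularity. Once that is in place, the argument is essentially formal; no further transversality or intersection-theoretic input beyond \cite[Theorem 7.6]{Saito} is needed.
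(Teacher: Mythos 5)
Your proposal is correct and follows essentially the same strategy as the paper: reduce the $\bQl$ case to the finite-coefficient case via the decomposition map $d_X$ and cite \cite[Theorem 7.6]{Saito}. The only cosmetic difference is that the paper unwinds $d_X$ by choosing a concrete $\mathcal O_E$-representative $\mathcal G$ with reduction $\mathcal G_0$ and applies \cite[Theorem 7.6]{Saito} directly to $\mathcal G_0$ over the finite field $k_E$, whereas you invoke the formal functoriality of $d_\bullet$ from Theorem \ref{thm:cdeforetalesheaf}(1) and pass through an (unnecessary for the stated hypotheses) $\bFl$ intermediate step, together with the additivity bookkeeping to justify evaluating $CC$ on the Grothendieck-group class $d_X\F$.
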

\begin{proof}
If $\Lambda$ is finite, this is \cite[Theorem 7.6]{Saito}. Now we consider the case where $\Lambda=\bQl$.
We choose  a finite extension $E$ of $\mathbb Q_\ell$  and $\mathcal G=(\mathcal G_n)_{n\geq 0}\in D_c^b(X,\mathcal O_E)$ such that $\mathcal F=\mathcal G\otimes\bQl$. 
Then $SS\F=SS\G_0$ and $h$ is $SS\G_0$-transversal. Thus  we have
\begin{equation}
CCh^\ast\F\overset{\eqref{eq:def:ccforbql}}{=}CCh^*\G_0\overset{(a)}{=}h^!CC\G_0\overset{\eqref{eq:def:ccforbql}}{=}h^!CC\F,
\end{equation}
where (a) follows from  \cite[Theorem 7.6]{Saito}.
\end{proof}

\begin{definition}[{\cite[Definition 5.7]{Saito}}] Let $k$ be a perfect field of characteristic $p$ and $\Lambda$  a finite field of characteristic $\ell(\ell\neq p)$ or $\Lambda=\bQl$.
If $\Lambda=\bQl$, we further assume $k$ satisfies the condition (F) of Subsection \ref{sub:FiniteAssum}.
Let $X$ be a smooth scheme purely of dimension $d$ over  $k$ and $\F\in D_c^b(X,\Lambda)$.
We define the characteristic class of $\F$ by
\begin{equation}\label{eqYZ:ccdef}
cc_X\F=(CC\F, T^*_XX)_{T^*X}\in CH_0(X).
\end{equation}
\end{definition}
If $X$ is moreover projective and $\Lambda$ is finite, then $\chi(X_{\bar k},\F)=\deg(cc_X\F)$ by \cite[Theorem 7.13]{Saito}. The case $\Lambda=\bQl$ can
be deduced from \cite[Theorem 7.13]{Saito} by applying the decomposition homomorphism $d_X\colon K(X,\bQl)\to K(X,\bFl)$, 
or one can prove it by induction on the dimension of $X$ and by using \eqref{eq:fiber2}. 

The following  identity (\ref{eq:fiber2}) for $cc_X\mathcal F$ is key to the proof of our main Theorem \ref{main}.
\begin{proposition}\label{prop:cc2}
Let $X$ be a smooth scheme purely of dimension $d$ over  a perfect field $k$
and  $\mathcal F\in D_c^b(X,\Lambda)$.
If $\Lambda=\bQl$, we further assume $k$ satisfies the condition (F) of Subsection \ref{sub:FiniteAssum}.
Let $f\colon X\to Y$ be a pre-good fibration to a smooth connected curve $Y$ over $k$ with respect to $SS\mathcal F$ and let $u_1,\ldots, u_m\in X$ be the isolated characteristic points of $f$ with respect to $SS\mathcal F$.
Let $\omega$ be a non-zero rational $1$-form on $Y$ which has neither poles nor zeros at $f(u_1),\ldots, f(u_m)$ such that for all closed point $v\in Y$ with ${\rm ord}_v(\omega)\neq 0$, $X_v$ is smooth and $i_v\colon X_v\rightarrow X$ is properly $SS\mathcal F$-transversal.

Then we have an equality in $CH_0(X)$:
\begin{align}\label{eq:fiber2}
cc_X\mathcal F=-\sum_{i=1}^m{\rm dimtot}\phi_{u_i}(\mathcal F,f )\cdot [u_i]-\sum_{v\in |Y|}\ord_v(\omega)\cdot i_{v\ast}(cc_{X_v}(\mathcal F|_{X_v})),
\end{align}
where $i_{v\ast}\colon CH_0(X_v)\rightarrow CH_0(X)$ is the natural homomorphism.
\end{proposition}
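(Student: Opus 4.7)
The plan is to deduce the formula directly from the fibration formula for Gysin pull-backs (Lemma \ref{cc}) applied to the characteristic cycle $CC\mathcal F$, combined with the Milnor formula for characteristic cycles and the compatibility of $CC$ with properly transversal pull-back (Lemma \ref{pull-back}). The characteristic class is by definition $cc_X\mathcal F = (CC\mathcal F, T^\ast_XX)_{T^\ast X} = 0_X^!(CC\mathcal F) \in CH_0(X)$, so the question reduces to computing $0_X^!$ applied to the $d$-cycle $Z := CC\mathcal F$, which is supported on the closed conical subset $C := SS\mathcal F$ of dimension $d$. All the hypotheses imposed on $f$ and $\omega$ in the statement are precisely what is needed to invoke Lemma \ref{cc}.

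Applying Lemma \ref{cc} to $Z = CC\mathcal F$ gives
\begin{equation}\label{eq:prop:cc2:1}
cc_X\mathcal F = 0_X^!(CC\mathcal F) = \sum_{i=1}^m (CC\mathcal F, f^\ast\omega)_{T^\ast X, u_i}\cdot [u_i] + \sum_{v\in |Y|}\ord_v(\omega)\cdot i_{v\ast}\bigl(0^!_{X_v}(i_v^\ast CC\mathcal F)\bigr).
\end{equation}
I would then identify the two sums separately. For the first sum, each $u_i$ is at most an isolated characteristic point of $f$ with respect to $SS\mathcal F$ (by the definition of a pre-good fibration), so the Milnor formula \eqref{eqYZ:milnor}, proved for finite coefficients by T. Saito (Theorem \ref{thm:beilinsonsaito}(2)) and extended to $\ell$-adic and $\bQl$-coefficients in Proposition \ref{thm:milnorforladic} and Theorem \ref{thm:cdeforetalesheaf}(3), yields
\begin{equation}\label{eq:prop:cc2:2}
(CC\mathcal F, f^\ast\omega)_{T^\ast X, u_i} = -\dimtot\phi_{u_i}(\mathcal F, f).
\end{equation}

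For the second sum, since $i_v\colon X_v\hookrightarrow X$ is properly $SS\mathcal F$-transversal whenever $\ord_v(\omega)\neq 0$, Lemma \ref{pull-back} gives $CC(\mathcal F|_{X_v}) = i_v^!(CC\mathcal F) = (-1)^{(d-1)-d}\cdot i_v^\ast(CC\mathcal F) = -i_v^\ast(CC\mathcal F)$, so that $i_v^\ast(CC\mathcal F) = -CC(\mathcal F|_{X_v})$. Applying the Gysin map of the zero section $0_{X_v}\colon X_v\to T^\ast X_v$ to both sides yields
\begin{equation}\label{eq:prop:cc2:3}
0^!_{X_v}(i_v^\ast CC\mathcal F) = -0^!_{X_v}(CC(\mathcal F|_{X_v})) = -cc_{X_v}(\mathcal F|_{X_v}).
\end{equation}
Substituting \eqref{eq:prop:cc2:2} and \eqref{eq:prop:cc2:3} into \eqref{eq:prop:cc2:1} produces the desired formula \eqref{eq:fiber2}.

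The proof is essentially an assembly of previously established results, so there is no serious obstacle. The one point that requires care is the sign bookkeeping: the sign $-1$ in front of the fiber terms comes from the convention $i_v^! = (-1)^{m-d}i_v^\ast$ in \eqref{eq:pullbackCCdef-1} applied in codimension one, while the sign $-1$ in front of the local Milnor terms is the sign appearing in the Milnor formula itself. Both are crucial for matching the two sums on the right-hand side of \eqref{eq:fiber2}.
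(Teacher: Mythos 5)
Your proposal is correct and is essentially the same as the paper's proof: apply Lemma \ref{cc} to $Z = CC\mathcal F$, identify the isolated terms via the Milnor formula, and identify the fiber terms via Lemma \ref{pull-back} together with the sign convention $i_v^! = (-1)^{m-d}i_v^\ast$ in codimension one. Your sign bookkeeping agrees with the paper's.
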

\begin{proof}We apply Lemma \ref{cc} to $Z=CC\mathcal F$. For a closed point $v\in |Y|$, if ${\rm ord}_v(\omega)\neq 0$, 
then $i_v\colon X_v\rightarrow X$ is properly $SS\mathcal F$-transversal and is of codimension $1$. Hence by Lemma \ref{pull-back}
we have 
\[
cc_{X_v}(\F\vert_{X_v})=(i_v^!CC\F, T_{X_v}^*X_v)\overset{\eqref{eq:pullbackCCdef-1}}{=}-(i_v^*CC\F,T_{X_v}^*X_v).
\]
By the Milnor formula (\ref{eqYZ:milnor}), $-\dimtot\phi_{u_i}(\F, f)=(CC\F, f^*\omega)_{T^*X, u_i}$.
Now (\ref{eq:fiber2}) follows from (\ref{fiber1}).
\end{proof}
\begin{lemma}\label{lem:blowupsheaf}
Let $X$ and $Y$ be smooth projective connected schemes over a  field $k$ and let $i\colon Y\hookrightarrow X$ be a closed immersion of codimension $r\geq 1$.
Let $\pi\colon \widetilde{X}\to X$ be the blow up of $X$ along $Y$ and $\mathcal F\in D_c^b(X,\Lambda)$. Then we have a distinguished triangle in $D_c^b(X,\Lambda)$:
\begin{equation}\label{eq:blowupsheaf:0}
\mathcal F\to R\pi_\ast\pi^\ast\mathcal F\to \bigoplus_{t=1}^{r-1}(i_\ast i^\ast\mathcal F)(-t)[-2t]\to\mathcal F[1].
\end{equation}
In particular, if $k=\mathbb F_q$ is a finite field, then we have $($cf. \eqref{eq:introep00}$)$
\begin{equation}\label{eq:blowupsheaf:00}
\varepsilon(\widetilde{X},\pi^\ast \mathcal F)=\varepsilon(X,\mathcal F)\cdot \varepsilon(Y,i^\ast\mathcal F)^{r-1}\cdot q^{-\frac{r(r-1)}{2}\chi(Y_{\bar k},i^\ast\mathcal F)} \quad{\rm in}~\Lambda^\times.
\end{equation}
\end{lemma}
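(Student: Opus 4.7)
My plan is to deduce the triangle \eqref{eq:blowupsheaf:0} from the case $\mathcal F=\Lambda_X$ via the projection formula, and then to obtain \eqref{eq:blowupsheaf:00} from \eqref{eq:blowupsheaf:0} using the multiplicativity of $\varepsilon$-factors in distinguished triangles together with the effect of Tate twists.

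First, I would compute $R\pi_\ast \Lambda_{\widetilde X}$. Since $\pi$ is an isomorphism outside $Y$, the adjunction unit $\Lambda_X\to R\pi_\ast\Lambda_{\widetilde X}$ is an isomorphism over $X\setminus Y$, so its cone $\mathcal C$ is supported on $Y$. Applying proper base change along $i\colon Y\hookrightarrow X$ and using that $\pi^{-1}(Y)=\mathbb P(N_{Y/X})\xrightarrow{\tilde\pi} Y$ is a projective bundle of relative dimension $r-1$, the projective bundle formula gives the canonical decomposition
\begin{equation*}
R\tilde\pi_\ast \Lambda_{\widetilde Y}\simeq \bigoplus_{t=0}^{r-1}\Lambda_Y(-t)[-2t],
\end{equation*}
and the map $\Lambda_Y\to R\tilde\pi_\ast\Lambda_{\widetilde Y}$ is the inclusion of the $t=0$ summand. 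Hence $i^\ast\mathcal C\simeq \bigoplus_{t=1}^{r-1}\Lambda_Y(-t)[-2t]$ and so $\mathcal C\simeq i_\ast\bigoplus_{t=1}^{r-1}\Lambda_Y(-t)[-2t]$. This yields a distinguished triangle
\begin{equation*}
\Lambda_X\to R\pi_\ast\Lambda_{\widetilde X}\to \bigoplus_{t=1}^{r-1}i_\ast\Lambda_Y(-t)[-2t]\to\Lambda_X[1].
\end{equation*}
Tensoring with $\mathcal F$ (derived), using the projection formula $R\pi_\ast\pi^\ast\mathcal F\simeq \mathcal F\otimes^L R\pi_\ast\Lambda_{\widetilde X}$ for the proper map $\pi$, and using $\mathcal F\otimes^L i_\ast\Lambda_Y\simeq i_\ast i^\ast\mathcal F$ (projection formula for the closed immersion $i$), we obtain \eqref{eq:blowupsheaf:0}.

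For \eqref{eq:blowupsheaf:00}, assume now $k=\mathbb F_q$. Since $\pi$ is proper, $R\Gamma(\widetilde X_{\bar k},\pi^\ast\mathcal F)=R\Gamma(X_{\bar k},R\pi_\ast\pi^\ast\mathcal F)$, hence $\varepsilon(\widetilde X,\pi^\ast\mathcal F)=\varepsilon(X,R\pi_\ast\pi^\ast\mathcal F)$. The additivity of $\varepsilon$ applied to \eqref{eq:blowupsheaf:0} and the analogous identity $\varepsilon(X,i_\ast\mathcal H)=\varepsilon(Y,\mathcal H)$ for the proper map $i$ give
\begin{equation*}
\varepsilon(\widetilde X,\pi^\ast\mathcal F)=\varepsilon(X,\mathcal F)\cdot\prod_{t=1}^{r-1}\varepsilon\bigl(Y,(i^\ast\mathcal F)(-t)[-2t]\bigr).
\end{equation*}
The shift by $-2t$ leaves $\varepsilon$ unchanged (even shifts act trivially on the virtual Frobenius module in the Grothendieck group). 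For the Tate twist, $R\Gamma(Y_{\bar k},\mathcal G(-t))$ has Frobenius multiplied by $q^t$ relative to $R\Gamma(Y_{\bar k},\mathcal G)$, so $\det(-\mathrm{Frob}_k;R\Gamma(Y_{\bar k},\mathcal G(-t)))=q^{t\chi(Y_{\bar k},\mathcal G)}\cdot\det(-\mathrm{Frob}_k;R\Gamma(Y_{\bar k},\mathcal G))$, whence $\varepsilon(Y,\mathcal G(-t))=q^{-t\chi(Y_{\bar k},\mathcal G)}\cdot\varepsilon(Y,\mathcal G)$. Taking the product over $t=1,\dots,r-1$ and using $\sum_{t=1}^{r-1}t=r(r-1)/2$ yields \eqref{eq:blowupsheaf:00}.

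The only non-routine point is the identification of the cone of $\Lambda_X\to R\pi_\ast\Lambda_{\widetilde X}$; everything else is formal manipulation with the six functors and with the definition \eqref{eq:introep00} of $\varepsilon$. I anticipate no real obstacle beyond invoking the projective bundle formula for étale cohomology with $\Lambda$-coefficients and proper base change.
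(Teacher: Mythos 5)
Your proposal is correct and follows essentially the same route as the paper: establish the triangle for the constant sheaf, tensor with $\mathcal F$ via the projection formula, and then use additivity of $\varepsilon$ together with the effect of even shifts and Tate twists ($\varepsilon(Y,\mathcal G(-t))=q^{-t\chi(Y_{\bar k},\mathcal G)}\varepsilon(Y,\mathcal G)$) to get \eqref{eq:blowupsheaf:00}. The only difference is that the paper simply cites the blow-up triangle for $\Lambda$ from the literature (ILO14, Exp.\ XVI, Prop.\ 2.2.2.1), whereas you rederive it from proper base change and the projective bundle formula, which is a standard and valid argument.
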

\begin{proof}By \cite[Expos\'e XVI, Proposition 2.2.2.1]{ILO14}, we have a distinguished triangle in $D_c^b(X,\Lambda)$:
\begin{equation}\label{eq:blowupsheaf:1}
\Lambda\to R\pi_\ast \Lambda\to \bigoplus_{t=1}^{r-1}(i_\ast \Lambda)(-t)[-2t]\to \Lambda[1],
\end{equation}
which is also valid for $\Lambda=\bQl$.
Applying the functor $-\otimes^L\mathcal F$ to the above distinguished triangle, we get another
distinguished triangle in $D_c^b(X,\Lambda)$:
\begin{equation}\label{eq:blowupsheaf:2}
\mathcal F\to R\pi_\ast \Lambda\otimes^L\mathcal F\to \bigoplus_{t=1}^{r-1}(i_\ast \Lambda)\otimes^L\mathcal F(-t)[-2t]\to \mathcal F[1].
\end{equation}
By the projection formula \cite[Expos\'e XVII, Proposition 5.2.9]{SGA4T3}, we have 
\begin{equation}\label{eq:blowupsheaf:3}
R\pi_\ast \Lambda\otimes^L\mathcal F\simeq R\pi_\ast(\Lambda\otimes^L\pi^\ast\mathcal F)=R\pi_\ast\pi^\ast\mathcal F \qquad {\rm and}\qquad i_\ast\Lambda\otimes^L\mathcal F\simeq i_\ast i^\ast\mathcal F.
\end{equation}
Now \eqref{eq:blowupsheaf:0} follows from \eqref{eq:blowupsheaf:2} and \eqref{eq:blowupsheaf:3}.

Now let $k=\mathbb F_q$ be a finite field.
Since $\varepsilon(X,\mathcal F)=\det(-{\rm Frob}_k, R\Gamma(X_{\bar k};\mathcal F))^{-1}$ is additive with respect to $\mathcal F$, by \eqref{eq:blowupsheaf:0} we have
 \begin{equation}\label{eq:blowupsheaf:4}
 \varepsilon(X,R\pi_\ast\pi^\ast\mathcal F)=\varepsilon(X,\mathcal F)\prod_{t=1}^{r-1}\varepsilon(X,i_\ast i^\ast\mathcal F(-t)[-2t]).
 \end{equation}
Recall that for any $\mathcal K\in D_c^b(X,\Lambda)$ and any integer $n$, we have (cf. \cite[(3.1.1.7)]{Laumon} for $\bQl$-sheaves)
 \begin{align}\label{eq:blowupsheaf:5}
 \varepsilon(X, \mathcal K(n))
 &=\prod_{j}\det(-{\rm Frob}_k;  H^j(X_{\bar k},\mathcal K(n)))^{-1-j}\\
 \nonumber&=\prod_{j}\left(q^{-n\cdot (-1)^{-1-j}\cdot \dim_\Lambda  H^j(X_{\bar k},\mathcal K)}\cdot\det(-{\rm Frob}_k;  H^j(X_{\bar k},\mathcal K))^{-1-j}\right)\\
\nonumber &=q^{n\cdot\chi(X_{\bar k},\mathcal K)}\cdot \varepsilon(X,\mathcal K).
 \end{align}
Since $ \varepsilon(X,R\pi_\ast\pi^\ast\mathcal F)= \varepsilon(\widetilde{X},\pi^\ast\mathcal F)$ and $\varepsilon(X,i_\ast i^\ast\mathcal F(-t)[-2t])=\varepsilon(Y, i^\ast\mathcal F(-t))$, combining \eqref{eq:blowupsheaf:4} and \eqref{eq:blowupsheaf:5}, we get \eqref{eq:blowupsheaf:00}.
\end{proof}
Now, we turn to prove our main theorem.
\begin{theorem}\label{thm:2}
Let $X$ be a smooth connected and projective  scheme  of dimension $d$ over a finite field $k$ of characteristic $p$. 
Let $\Lambda$ be a finite field of characteristic $\ell(\ell\neq p)$ or $\Lambda=\bQl$.
Let $\F\in D_c^b(X,\Lambda)$  and let $\G$ be a smooth sheaf of $\Lambda$-modules on $X$. Let $\det\G\colon CH_0(X)\to\Lambda^\times$ also denote the composition of the reciprocity map $CH_0(X)\to\pi_1(X)^{\rm ab}$ and the character $\pi_1(X)^{\rm ab}\rightarrow \Lambda^\times$ corresponding to the sheaf $\det\G$.
Then we have
\begin{align}\label{epsilon}
\det\G(-cc_X\F)=\frac{\varepsilon(X,\F\otimes\G)}{\qquad\varepsilon(X,\F)^{{\rm rank}\mathcal G}~~}\qquad{\rm in}~\Lambda^\times.
\end{align}
\end{theorem}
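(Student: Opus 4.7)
My plan is to prove Theorem \ref{thm:2} by induction on $d=\dim X$, following S.~Saito's approach in \cite{Saito:1984}. The base case $d=1$ is a classical consequence of the Deligne--Laumon product formula \eqref{eqYZ:product}: applied to both $\varepsilon(X,\mathcal F\otimes\mathcal G)$ and $\varepsilon(X,\mathcal F)^{\rank\mathcal G}$, the prefactors $q^{(1-g)\rank\mathcal F\rank\mathcal G}$ cancel in the ratio; the local twist formula \eqref{eqYZ:t1} rewrites each local factor of the ratio as $\det(\mathrm{Frob}_v;\mathcal G_{\bar v})^{a_v(\mathcal F,\omega)}$; and the resulting product is identified with $\det\mathcal G(-cc_X\mathcal F)$ via the reciprocity map together with the explicit description of the characteristic cycle of a constructible sheaf on a curve.

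\textbf{Inductive step.} Assume the statement in dimensions below $d$. Embed $X\hookrightarrow\mathbb P(E^\vee)$ by a sufficiently ample line bundle so that conditions (E) and (C) of Section \ref{sec:eogp} hold for $SS\mathcal F$. Using Lemma \ref{pencil} (after passing to a finite extension $k'\supseteq k$ if necessary, since $k$ is finite and the lemma only guarantees a $k$-rational good pencil when $k$ is infinite), choose a good pencil $L\subseteq\mathbb P^\vee$ with respect to $SS\mathcal F$; let $\pi_L\colon X_L\to X$ be the blow-up along $A_L\cap X$ and $p_L\colon X_L\to L\cong\mathbb P^1$ the associated good fibration with isolated characteristic points $u_1,\ldots,u_m$. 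The blow-up formula \eqref{eq:blowupsheaf:00} for $\varepsilon$-factors and Lemma \ref{blowup} (applied with $c=2$) for characteristic classes transport the statement from $(X,\mathcal F)$ to $(X_L,\pi_L^\ast\mathcal F)$, leaving only correction terms supported on the codimension-$2$ subscheme $A_L\cap X$, where the induction hypothesis applies directly. For $X_L$, the Leray spectral sequence yields $\varepsilon(X_L,\pi_L^\ast\mathcal F)=\varepsilon(L,Rp_{L\ast}\pi_L^\ast\mathcal F)$, and applying the product formula on $L\cong\mathbb P^1$ with a rational $1$-form $\omega$ chosen to avoid the critical values $p_L(u_i)$ and so that the fibers over the zeros and poles of $\omega$ are smooth and properly $SS\mathcal F$-transversal, decomposes the right-hand side into local contributions indexed by $v\in|L|$. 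At a regular $v$ the fiber $X_v$ has dimension $d-1$; combining Lemma \ref{twist}(3) with the inductive hypothesis for $(X_v,\mathcal F|_{X_v})$ and $(X_v,(\mathcal F\otimes\mathcal G)|_{X_v})$ converts the corresponding local ratio into $\det\mathcal G|_{X_v}(\ord_v(\omega)\cdot cc_{X_v}\mathcal F|_{X_v})$. At each critical value $p_L(u_i)$, the Milnor formula \eqref{eqYZ:milnor} identifies the local exponent with $-(CC(\pi_L^\ast\mathcal F),p_L^\ast\omega)_{T^\ast X_L,u_i}=\dimtot\phi_{u_i}(\pi_L^\ast\mathcal F,p_L)$. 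Summing over $v\in|L|$ reproduces precisely the right-hand side of the fibration formula of Proposition \ref{prop:cc2} applied to $p_L$, which equals $\det\mathcal G(-cc_{X_L}\pi_L^\ast\mathcal F)$, and the induction closes.

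\textbf{Main obstacle.} The principal difficulty is the precise term-by-term matching of the product formula on $L$ with Proposition \ref{prop:cc2} under the reciprocity map $CH_0(X_L)\to\pi_1^{\mathrm{ab}}(X_L)$: at non-critical places it requires the uniform treatment of Lemma \ref{twist} in both the $\Lambda=\bQl$ and finite-$\Lambda$ settings together with the proper-base-change computation of $Rp_{L\ast}\pi_L^\ast\mathcal F$ near each $v$, while at the critical points $u_i$ it is the Milnor formula that performs the identification. A secondary technical point is the absence of $k$-rational good pencils when $k$ is finite: one handles this by working over a finite extension $k'\supseteq k$ containing a good pencil, proving the identity for $(X_{k'},\mathcal F|_{X_{k'}},\mathcal G|_{X_{k'}})$, and then descending to $(X,\mathcal F,\mathcal G)$ by functoriality of both sides of \eqref{epsilon} along $X_{k'}\to X$.
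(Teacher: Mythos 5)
Your outline reproduces the paper's own strategy almost step for step: induction on $\dim X$, a good pencil from Lemma \ref{pencil}, the blow-up formulas (Lemma \ref{lem:blowupsheaf} and Lemma \ref{blowup} with $c=2$) to pass from $X$ to $X_L$ with a correction term on $A_L\cap X$, the Deligne--Laumon product formula on $L\cong\mathbb P^1$ applied to $Rp_{L*}$ of the rank-zero twist, Lemma \ref{twist} at the unramified places, the vanishing-cycle/Milnor-formula computation at the isolated characteristic points, and finally Proposition \ref{prop:cc2} to reassemble the local terms into $cc_{X_L}\pi_L^\ast\mathcal F$. Up to the choice of base case ($d=1$ via the product formula instead of the paper's elementary $d=0$ computation, which is harmless) this is the same proof.

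There is, however, one genuine gap: the descent from the auxiliary extension $k'$ back to $k$. Since $k$ is finite, Lemma \ref{pencil} only produces a good pencil after a finite extension, and you propose to conclude ``by functoriality of both sides of \eqref{epsilon} along $X_{k'}\to X$.'' This does not work as stated. Under a degree-$r$ extension both sides of \eqref{epsilon} are not preserved but are raised to the $r$-th power: $\varepsilon(X_{k_r},\mathcal F|_{X_{k_r}})$ differs from $\varepsilon(X,\mathcal F)^r$ by a sign that cancels in the ratio, and $\det(\pi^\ast\mathcal G)(-cc_{X_{k_r}}\pi^\ast\mathcal F)=\det\mathcal G(-cc_X\mathcal F)^r$ via $\pi_\ast\pi^\ast=r$ on $CH_0$ and the compatibility of reciprocity maps (this is exactly Remark \ref{pf:rthpower} in the paper). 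Consequently, knowing the identity over $k'$ only tells you that the discrepancy $c=\det\mathcal G(-cc_X\mathcal F)^{-1}\cdot\varepsilon(X,\mathcal F\otimes\mathcal G)\varepsilon(X,\mathcal F)^{-\mathrm{rank}\,\mathcal G}$ satisfies $c^{[k':k]}=1$; it does not yield $c=1$. The paper closes this by proving the identity over \emph{two} extensions of coprime degrees: for any integer $r\neq\ell$ prime to $\ell$, the union of the degree-$r^e$ extensions of $k$ is an infinite field, so Lemma \ref{pencil} furnishes a good pencil over some extension of degree $r^{e}$; running the argument for two coprime such degrees $r_1,r_2$ gives $c^{r_1}=c^{r_2}=1$ and hence $c=1$. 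Your proposal needs this (or an equivalent) argument; as written the final descent step fails. A smaller point in the same vein: at a critical point $u_i$ you need $\phi_{u_i}(\mathcal F\otimes\mathcal G_0,f)\simeq\phi_{u_i}(\mathcal F,f)\otimes\mathcal G_0|_{u_i}$ and the rationality of $[u_i]$ over $f(u_i)$, which is precisely why the pencil must be \emph{good} (condition (3) of Definition \ref{def:gp}) and not merely pre-good; this should be said explicitly, and it is again arranged by the same base-change-and-coprime-degree device.
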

\begin{remark}\label{pf:rthpower}
After taking a finite extension $k_r$ of $k$ of degree $r$, the left and right hand sides of the formula \eqref{epsilon} become $r$th power. 
Indeed, since 
$\mathrm{Frob}_{k_r}=(\mathrm{Frob}_{k})^r$, we have
\begin{align*}
\varepsilon (X_{k_r},\mathcal F|_{X_{k_r}})&=\det(-\mathrm{Frob}_{k_r};R\Gamma(X_{\bar{k}},\F))^{-1}=\det(-(\mathrm{Frob}_{k})^r;R\Gamma(X_{\bar{k}},\F))^{-1}\\
&=(-1)^{- \chi(X_{\bar k},\mathcal F)}\cdot \left(\det(\mathrm{Frob}_{k};R\Gamma(X_{\bar{k}},\F))^{-1}\right)^r\\
&=(-1)^{(r-1)\cdot \chi(X_{\bar k},\mathcal F)}\cdot\left(\det(-\mathrm{Frob}_{k};R\Gamma(X_{\bar{k}},\F))^{-1}\right)^r\\
&=(-1)^{(r-1)\cdot \chi(X_{\bar k},\mathcal F)}\cdot\varepsilon (X,\mathcal F)^r. 
\end{align*}
By  \cite[Corollaire 2.10]{Illusie}, we have $\chi(X_{\bar k},\mathcal F\otimes \mathcal G)={\rm rank}\mathcal G\cdot \chi(X_{\bar k},\mathcal F)$ and
\begin{align*}
\frac{\varepsilon(X_{k_r},(\F\otimes\G)|_{X_{k_r}})}{\varepsilon(X_{k_r},\F|_{X_{k_r}})^{{\rm rank}\mathcal G}}=
\frac{(-1)^{(r-1)\cdot \chi(X_{\bar k},\mathcal F\otimes\mathcal G)}\cdot\varepsilon (X,\mathcal F\otimes\mathcal G)^r}{(-1)^{(r-1)\cdot \chi(X_{\bar k},\mathcal F)\cdot {\rm rank}\mathcal G}\cdot\varepsilon (X,\mathcal F)^{{\rm rank}\mathcal G\cdot r}}=\left(\frac{\varepsilon(X,\F\otimes\G)}{\varepsilon(X,\F)^{{\rm rank}\mathcal G}}\right)^r.
\end{align*}
Thus the right hand side of \eqref{epsilon} becomes  $r$th power after base change to $k_r$.
Since the projection $\pi: X_{k_r}\to X$ is finite \'etale, $\pi: X_{k_r}\to X$ is properly $SS(\mathcal F)$-transversal and we have $\pi^\ast(cc_X(\mathcal F))=cc_{X_{k_r}}(\pi^\ast\mathcal F)$ by Lemma \ref{pull-back}. 
By $($\cite[Theorem 1]{Kato_Saito:1983}, or \cite[Lemma 5.1 (1)]{Raskind}$)$, we have a commutative diagram
\begin{align}
\begin{gathered}
\xymatrix{
CH_0(X_{k_r})\ar[d]_-{\pi_\ast}\ar[r]&\pi_1^{\rm ab}(X_{k_r})\ar[d]^-{\pi_\ast}\ar[rrd]^-{{\det}(\pi^\ast\mathcal G)}\\
CH_0(X)\ar[r]&\pi_1^{\rm ab}(X)\ar[rr]^-{{\det}\mathcal G}&&\Lambda^\times
}
\end{gathered}
\end{align}
Then $\det(\pi^\ast\G)(-cc_{X_{k_r}}\pi^\ast\F)=\det\G(-\pi_\ast cc_X\pi^\ast\F)=\det\G(-\pi_\ast \pi^\ast cc_X\F)=\det\G(-r\cdot cc_X\F)=\det\G(- cc_X\F)^r$. Thus the left hand side of \eqref{epsilon} also becomes  $r$th power after base change to $k_r$.
\end{remark}
\begin{proof}[Proof of Theorem \ref{thm:2}]
We prove the theorem by induction on the dimension $d=\dim X$. We denote by $\G_0$ the virtual sheaf $[\G]-\rank\G\cdot [\Lambda]\in K(X,\Lambda)$.

If $d=0$, any constructible sheaf on $X$ corresponds to a continuous representation of $\pi_1(X)$.
Since the rank of $\F\otimes\G_0$ equals to $0$, we have $\det(-\mathrm{Frob};\F\otimes\G_0)=\det(\mathrm{Frob};\F\otimes\G_0)$.
Since the characteristic class $cc_X\F$ of $\F$ equals to the $0$-cycle $\rank\F \cdot [X]\in CH_0(X)$,
we have
\begin{align}
\varepsilon(X,\F\otimes\G_0)&=\frac{\det(\mathrm{Frob};\F)^{\rank\G}}{\det(\mathrm{Frob};\G)^{\rank\F}\det(\mathrm{Frob};\F)^{\rank\G}}\\
\nonumber&=\det(\mathrm{Frob};\G)^{-\rank\F}=\det\G(-cc_X\F).
\end{align}
Hence Theorem \ref{thm:2} is proved for $d=0$.

Suppose that $d\geq1$ and that $X$ has a good fibration $f\colon X\to \mathbb{P}^1$ with respect to $C=SS\F$.
Let $u_1,\cdots,u_m\in X$ be the isolated characteristic points of $f$ with respect to $C$.
Let $\Sigma$ be the finite set consisting of the closed points $f(u_1),\cdots, f(u_m)$ of $\mathbb {P}^1$.
Since the left and right hand sides of the formula \eqref{epsilon} become $r$th power after taking an extension of $k$ of degree $r$ (cf. Remark \ref{pf:rthpower}),
it is enough to show the theorem after taking two extensions of $k$ whose degrees are coprime.
Hence we may assume that there exists a coordinate $t$ of $\mathbb{P}^1$ such that $\infty\notin\Sigma$ and $X_\infty\to X$ is properly $C$-transversal and we fix such a coordinate $t$.
Then the rational $1$-form $\omega=dt$ has neither  zeros nor poles on $\Sigma$.
Let $K$ be the function field of $\mathbb{P}^1$.
Let $dx=\otimes_{v\in\abs{X}}dx_v$ be the Haar measure on $\mathbb A_K$ with values in $\Lambda$ such that $\int_{O_{K_v}}dx=1$ for every place $v$.

Since $\G$ is smooth and $f$ is proper, the generic rank of $Rf_*(\F\otimes\G_0)$ equals to $0$ by \cite[Corollaire 2.10]{Illusie}.
Since the generic rank of $Rf_*(\F\otimes\G_0)$ is zero, the right hand side of (\ref{epsilon}), which equals to $\varepsilon(\mathbb{P}^1, Rf_*(\F\otimes\G_0))$, is the product of local constants by Theorem \ref{product}, i.e.,
\begin{equation}\label{eq:thm2:pro1}
\varepsilon(\mathbb{P}^1, Rf_*(\F\otimes\G_0))=\prod_{v\in\abs{\mathbb{P}^1}}\varepsilon_v(Rf_*(\F\otimes\G_0)|_{\mathbb P^1_{(v)}},\omega),
\end{equation}
where ${\mathbb P^1_{(v)}}$ is the henselization of $\mathbb P^1$ at $v$.
Suppose $v\notin\Sigma$.
Since $f$ is $C$-transversal at the fiber of $v$, the morphism $f$ is universally locally acyclic over $v$ by Definition \ref{Milnor}.
Since $f$ is proper, the  complex $Rf_\ast(\F)$ and $Rf_\ast(\mathcal F\otimes\mathcal G)$ are smooth at $v$ by \cite[Appendice \`a Th. finitude, 2.4]{SGA4h}, i.e., the cohomology groups of $Rf_\ast(\F)$ and $Rf_\ast(\mathcal F\otimes\mathcal G)$ are locally constant on a neighborhood of $v$.
By Lemma \ref{twist} (3), we have
\begin{align}
\varepsilon_v(Rf_*(\F\otimes\G_0)|_{\mathbb P^1_{(v)}},\omega)&=\det(\mathrm{Frob}_v; Rf_*(\F\otimes\G_0)_{\bar{v}})^{\ord_v(\omega)}\\
\nonumber&=\det(-\mathrm{Frob}_v; Rf_*(\F\otimes\G_0)_{\bar{v}})^{\ord_v(\omega)}
\end{align}
because $\rank(Rf_*(\F\otimes\G_0)) =0$.
By the proper base change theorem \cite[Expose XII, Th\'eor\`em 5.1]{SGA4T3}, we have
\begin{align}
\det(-\mathrm{Frob}_v; Rf_*(\F\otimes\G_0)_{\bar{v}})^{\ord_v(\omega)}&=\det(-\mathrm{Frob}_v;R\Gamma(X_{\bar{v}},\F\otimes\G_0))^{\ord_v(\omega)}\\
\nonumber&=\varepsilon(X_{v}, (\F\otimes\G_0)\vert_{X_v})^{-\ord_v(\omega)}.
\end{align}
Hence, for $v\notin\Sigma$, we have 
\begin{equation}\label{eq:thm2:ind}
\varepsilon_v(Rf_*(\F\otimes\G_0)|_{\mathbb P^1_{(v)}},\omega)=\det\G(\ord_v(\omega)\cdot cc(\F\vert_{X_v}))
\end{equation}
by the induction hypothesis for $X_v$ over $k(v)$.

Suppose $v\in\Sigma$. Since $f$
has only one isolated characteristic point $u$ in the fibre $X_v$, we have the following distinguished triangle \cite[Expos\'e XIII, (2.4.6.3)]{SGA7II}
\begin{align}\label{eq:revisedmainproof1}
R\Gamma(X_{\bar{v}},\F\otimes\G_0)\to R\Gamma(X_{\bar{\eta}_v},\F\otimes\G_0)\to \phi_u(\F\otimes\G_0,f)\to R\Gamma(X_{\bar{v}},\F\otimes\G_0)[1],
\end{align}
where $\bar{\eta}_v$ is a geometric generic point of $\mathbb P^1_{(\bar v)}$ of the strict localization of $\mathbb P^1$ at $v$.
Using  Lemma \ref{twist} (4) and  proper base change theorem, we have
\begin{align}
\label{eq:thm2:vin}\varepsilon_v(Rf_*(\F\otimes\G_0)|_{\mathbb P^1_{(v)}},\omega)&=\varepsilon_{0v}(Rf_*(\F\otimes\G_0)_{\bar{v}}, \omega)^{-1}\cdot \varepsilon_{0v}(Rf_*(\F\otimes\G_0)|_{{\eta}_v}, \omega)\\
\nonumber &\overset{\eqref{eq:revisedmainproof1}}{=}\varepsilon_{0v}(\phi_u(\F\otimes\G_0,f),\omega).
\end{align}
Since $\G_0$ is smooth, $f$ is a good fibration and $u$ is $k(v)$-rational, we have 
\begin{equation}\label{eq:vanishingcycleG0}
\phi_u(\F\otimes\G_0,f)=\phi_u(\F,f)\otimes\G_0\vert_u.
\end{equation}
 Here we regard $\G_0\vert_u$ as a virtual unramified representation of ${\rm Gal}(\overline{K}_v/K_v)\to \pi_1(\mathbb{P}^1_{(\bar v)},\bar{v})\cong \pi_1(v)\cong\pi_1(u)$, where $K_v$ is the fractional field of $\mathbb{P}^1_{(\bar v)}$. Indeed, we consider the following distinguished triangle with continuous ${\rm Gal}(\overline{K}_v/K_v)$-action
 \[\mathcal F_{\bar u}\otimes\mathcal G_{\bar u}\to \psi_{u}(\mathcal F\otimes \mathcal G,f)\to\phi_u(\F\otimes\G,f)\to \mathcal F_{\bar u}[1],\]
where $\psi$ is the nearby cycle functor \cite[Expos\'e XIII]{SGA7II}. We only need to show that $\psi_u(\F\otimes\G,f)=\psi_u(\F,f)\otimes\G\vert_u$.
By \cite[Expos\'e XIII, Proposition 2.1.4]{SGA7II}, we have
\begin{align}
\psi_u(\mathcal F\otimes\mathcal G,f)\simeq R\Gamma(X_{(\bar u)}\times_{\mathbb P^1_{(\bar v)}}\bar\eta_v,\mathcal F\otimes\mathcal G ).
\end{align}
Since  the restriction $\mathcal G|_{X_{(\bar u)}}$ equals to the pull-back of $\G\vert_u$ (regarded as a smooth sheaf on $\mathbb{P}^1_{(\bar v)}$) by the map $f_{(u)}\colon X_{(\bar u)}\to \mathbb P^1_{(\bar v)}$, we have 
$\psi_u(\mathcal F\otimes\mathcal G)\simeq \psi_u(\mathcal F)\otimes\mathcal G|_{u}$ by \cite[Expos\'e XVII, (5.2.11.1)]{SGA4T3}.

Since $\ord_v(\omega)=0$, by \eqref{eq:vanishingcycleG0} and Lemma \ref{twist} (2), we have 
\[
\varepsilon_{0v}(\phi_u(\F\otimes\G_0,f),\omega)=\det(\mathrm{Frob}_v; \G|_u)^{\dimtot\phi_u(\F, f)}.
\]
Combining with formula (\ref{eq:thm2:vin}), we proved that for $v\in\Sigma$
\begin{equation}\label{eq:thm2:vin2}
\varepsilon_v(Rf_*(\F\otimes\G_0)|_{\mathbb P^1_{(v)}},\omega)=\det \mathcal G(\dimtot\phi_u(\F, f)\cdot [u]).
\end{equation}
Therefore by (\ref{eq:thm2:pro1}), (\ref{eq:thm2:ind}) and (\ref{eq:thm2:vin2}), we have
\begin{align}
\varepsilon(X,\F\otimes\G_0)
&=\prod_{v\in\abs{\mathbb{P}^1}\setminus\Sigma}\det\G(\ord_v(\omega)\cdot cc(\F\vert_{X_v}))\times\prod_{i=1}^m\det\G(\dimtot\phi_{u_i}(\F, f)\cdot [u_i])\\
\nonumber&=\det\G\left(\sum_{v\in\abs{\mathbb{P}^1}}\ord_v(\omega)\cdot cc(\F\vert_{X_v})+\sum_{i=1}^m\dimtot\phi_{u_i}(\F, f)\cdot [u_i]\right)
\end{align}
Therefore, our theorem follows from Proposition \ref{prop:cc2} when $X$ has a good pencil.

Let $X$ be embedded in a projective space $\mathbb{P}$.
Suppose there exists a line $L\subseteq\mathbb{P}^{\vee}$ such that the fibration $p_L\colon X_L\to L$ defined by $L$ satisfies the conditions (1)--(4) in Lemma \ref{pencil}. We use the notation as in Lemma \ref{pencil}, i.e.,   $\pi_L\colon X_L\to X$ is the blow up of $X$ along $A_L\cap X$ and $i\colon A_L\cap X\to X$ is the closed immersion.
By induction hypothesis, $\varepsilon(X\cap A_L,\F\otimes\G_0)=\det(\G\vert_{X\cap A_L})(-cc(\F\vert_{A_L\cap X}))$.
Since $X_L$ has a good fibration, we have $\varepsilon(X_L,\pi^*_L(\F\otimes\G_0))=\det(\pi_L^*\G)(-cc(\pi^*_L\F))$.
Recall that we have the following commutative diagram by \cite[Theorem 1]{Kato_Saito:1983}
\begin{align}
\begin{gathered}\label{functorialclassfield}
\xymatrix{
 CH_0(X_{L})\ar[d]_-{\pi_{L,\ast}}\ar[r]&\pi_1^{\rm ab}(X_{L})\ar[d]^-{\pi_{L,\ast}}\ar[rrd]^-{{\det}(\pi_L^\ast\mathcal G)}\\
  CH_0(X)\ar[r]&\pi_1^{\rm ab}(X)\ar[rr]^-{{\det}\mathcal G}&&\Lambda^\times\\
 CH_0(X\cap A_L)\ar[u]\ar[r]&\pi_1^{\rm ab}(X\cap A_L)\ar[u]\ar[rru]_-{{\det}(\mathcal G|_{X\cap A_L})}
}
\end{gathered}
\end{align}
Note that $\chi((X\cap A_L)_{\bar k},\mathcal F\otimes\mathcal G_0)=0$ (cf. \cite[Corollaire 2.10]{Illusie}).
Hence by Lemma \ref{lem:blowupsheaf} and \eqref{functorialclassfield}, we have
\begin{align}
\varepsilon(X,\F\otimes\G_0)&\overset{\eqref{eq:blowupsheaf:00}}{=}\varepsilon(X_L,\pi^*_L(\F\otimes\G_0))\cdot \varepsilon(X\cap A_L,\F\otimes\G_0)^{-1}\\
\nonumber&=\det\G(-\pi_{L,*}(cc(\pi_L^*\F))+i_*cc(\F\vert_{X\cap A_L})).
\end{align}
Since $i\colon X\cap A_L\to X$ and $\pi_L: X_L\to X$ are properly $SS\F$-transversal and $i$ is of codimension $2$ ($A_L\to \mathbb P$ is of codimension 2 and  $A_L$ meets $X$ transversally), we have
\begin{equation}\label{eqYZ:pm3}
\pi_{L,*}(cc(\pi_L^*\F))-i_*cc(\F\vert_{X\cap A_L})=\pi_{L, *}(0_{X_L}^!(\pi_L^!CC\F))-i_*(0_{X\cap A_L}^! (i^!CC\F))
\end{equation}
by Lemma \ref{pull-back}, where $0_{X_L}\colon X_L\to T^\ast X_L$ and $0_{X\cap A_L}\colon X\cap A_L\to T^\ast(X\cap A_L)$ are the zero sections. By Lemma \ref{blowup}, the right hand side of the formula (\ref{eqYZ:pm3})  equals to $cc_X\F$.
Hence we have
\[
\varepsilon(X,\F\otimes\G_0)=\det\G(-cc_X\F).
\]

In general, we prove the formula (\ref{epsilon}) after extending the base field $k$ by using Lemma \ref{pencil}.
Let $k_r$ be the extension of degree $r$ over $k$ and let $X_r$ be the base change $X\otimes_kk_r$.
Since the left and right hand sides of the formula (\ref{epsilon})  become $r$th power for $X_r$  (cf. Remark \ref{pf:rthpower}),
it is enough to show that there exist coprime integers $r_1$ and $r_2$ such that the  formula (\ref{epsilon}) is true for $X_{r_1}$ and $X_{r_2}$.
Let $r \neq \ell $ be any integer prime to $\ell$.
The composition of all extensions of degree  $r^e(e\geq 0)$ over $k$ is an infinite field.
Hence by Lemma \ref{pencil}, there exists a good pencil for $X$ over an extension of $k$ of degree $r^{e}$ with $e$ large enough and the formula (\ref{epsilon}) is proved.
\end{proof}

\subsection{}\label{sub:serredecomporecall}
As a corollary of Theorem \ref{thm:2}, we prove the compatibility of characteristic classes with proper push-forward in Corollary \ref{corYZ:2}. 
Before that, let us briefly recall the definition of the decomposition map in \cite[15.2]{Serre97}.
Let $G$ be a finite group. Let $E$ be a finite extension of $\mathbb Q_\ell$, let $\mathcal O_E$ be the ring of integers of $E$ and $\Lambda$ the residue field of $\mathcal O_E$.
Let $K(G,E)$ (respectively $K(G, \Lambda)$) be the 
Grothendieck group of the category of finitely generated $E[G]$-modules
(respectively $\Lambda[G]$-modules). The decomposition homomorphism
\begin{align}\label{eq:sub:serredecomporecall00}
d\colon K(G,E)\to K(G,\Lambda)
\end{align}
is defined as follows:

For any $E$-representation $V$ of $G$, we choose any $G$-invariant $\mathcal O_E$-module $V_0$ such that $V=V_0\otimes_{\mathcal O_E}E$. Then 
\begin{align}
d(V)=V_0\otimes_{\mathcal O_E}\Lambda  \qquad{\rm in}~K(G,\Lambda).
\end{align}
By \cite[Chapter 15, Theorem 32]{Serre97}, $d(V)$ is independent of the choice of $V_0$. By \cite[Chapter 16, Theorem 33]{Serre97},  $d$ is  surjective. 

\begin{corollary}\label{corYZ:2}
Let $f:X\to Y$ be a proper map between smooth projective connected  schemes over {a finite field} $k$ and  $\F\in D_c^b(X,\Lambda)$.
Then we have an equality in $CH_0(Y)$:
\begin{equation}\label{eqYZ:cor2:1}
f_*(cc_X\F)=cc_Y Rf_*\F.
\end{equation}
\end{corollary}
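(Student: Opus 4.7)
My plan is to deduce \eqref{eqYZ:cor2:1} from the twist formula of Theorem \ref{thm:2} by testing both sides against every continuous character of $\pi_{1}^{\mathrm{ab}}(Y)$, and then to conclude equality of zero-cycles via unramified class field theory for smooth projective varieties over a finite field.

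Fix a smooth $\Lambda$-sheaf $\mathcal{G}$ of rank $r$ on $Y$. Applying Theorem \ref{thm:2} on $X$ to the pair $(\mathcal{F},f^{\ast}\mathcal{G})$ and on $Y$ to the pair $(Rf_{\ast}\mathcal{F},\mathcal{G})$, I obtain
\begin{align*}
\det(f^{\ast}\mathcal{G})(-cc_{X}\mathcal{F}) &= \varepsilon(X,\mathcal{F}\otimes f^{\ast}\mathcal{G})\cdot\varepsilon(X,\mathcal{F})^{-r},\\
\det\mathcal{G}(-cc_{Y}Rf_{\ast}\mathcal{F}) &= \varepsilon(Y,Rf_{\ast}\mathcal{F}\otimes\mathcal{G})\cdot\varepsilon(Y,Rf_{\ast}\mathcal{F})^{-r}.
\end{align*}
The projection formula $Rf_{\ast}(\mathcal{F}\otimes f^{\ast}\mathcal{G})\simeq Rf_{\ast}\mathcal{F}\otimes\mathcal{G}$ together with $R\Gamma(X_{\bar k},-)=R\Gamma(Y_{\bar k},Rf_{\ast}-)$, which is valid because $f$ is proper, makes the right-hand sides equal. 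On the left, the functoriality of the reciprocity map under proper push-forward, which is the same input already recorded in the diagram \eqref{functorialclassfield} applied to $f\colon X\to Y$ in place of $\pi_{L}$, yields $\det(f^{\ast}\mathcal{G})(z)=\det\mathcal{G}(f_{\ast}z)$ for every $z\in CH_{0}(X)$. Combining these identities, the class
\[
\alpha:=f_{\ast}cc_{X}\mathcal{F}-cc_{Y}Rf_{\ast}\mathcal{F}\in CH_{0}(Y)
\]
satisfies $\det\mathcal{G}(\alpha)=1$ in $\Lambda^{\times}$ for every smooth $\Lambda$-sheaf $\mathcal{G}$ on $Y$; equivalently, its image $\mathrm{rec}_{Y}(\alpha)\in\pi_{1}^{\mathrm{ab}}(Y)$ is annihilated by every continuous character $\pi_{1}^{\mathrm{ab}}(Y)\to\Lambda^{\times}$, since every such character arises as $\det\mathcal{G}$ for some smooth rank-one $\mathcal{G}$.

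It remains to conclude $\alpha=0$ in $CH_{0}(Y)$. When $\Lambda=\overline{\mathbb{Q}}_{\ell}$, continuous $\overline{\mathbb{Q}}_{\ell}^{\times}$-valued characters separate points of the profinite abelian group $\pi_{1}^{\mathrm{ab}}(Y)$, so $\mathrm{rec}_{Y}(\alpha)=0$; the reciprocity map $CH_{0}(Y)\to\pi_{1}^{\mathrm{ab}}(Y)$ is injective for $Y$ smooth projective over a finite field by Kato--Saito's unramified class field theory \cite{Kato_Saito:1983}, whence $\alpha=0$. For $\Lambda$ a finite field of characteristic $\ell$ the argument reduces to the $\overline{\mathbb{Q}}_{\ell}$-case via the decomposition homomorphism $d_{Y}$ of Subsection \ref{sub:defidecomp}, since both sides of \eqref{eqYZ:cor2:1} are $\mathbb{Z}$-valued zero-cycles whose defining input (the characteristic cycles) is compatible with reduction by Lemma \ref{reductionCC}, while $Rf_{\ast}$ and $f_{\ast}$ commute with the same reduction.

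The main obstacle is precisely this final class-field-theoretic step: the twist formula only controls $\alpha$ modulo the kernel of the reciprocity map, so the passage from the identity of all character values to an actual equality of zero-cycles depends essentially on Kato--Saito's injectivity theorem and on the fact that characters of $\pi_{1}^{\mathrm{ab}}(Y)$ realised by smooth rank-one sheaves are rich enough to detect every element of its image. Up to this input, the proof is a formal consequence of Theorem \ref{thm:2}, the projection formula, and the naturality of the reciprocity map.
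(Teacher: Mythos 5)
Your treatment of the case $\Lambda=\bQl$ is essentially the paper's own argument: apply Theorem \ref{thm:2} on $X$ to $(\F,f^\ast\G)$ and on $Y$ to $(Rf_\ast\F,\G)$, identify the two $\varepsilon$-ratios via the projection formula and $R\Gamma(X_{\bar k},-)=R\Gamma(Y_{\bar k},Rf_\ast-)$, move the left-hand side across $f_\ast$ by the functoriality of the reciprocity map, and conclude because $\bQl^\times$-valued characters separate points of $\pi_1^{\rm ab}(Y)$ and the reciprocity map $CH_0(Y)\to\pi_1^{\rm ab}(Y)$ is injective by Kato--Saito. That part is correct and coincides with the paper.

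The finite-coefficient case, however, has a genuine gap as written. To deduce the statement for a finite field $\Lambda$ from the $\bQl$-case you must produce, for the given $\F\in D_c^b(X,\Lambda)$, a virtual $\bQl$-sheaf $\F'\in K(X,\bQl)$ with $d_X(\F')=[\F\otimes_\Lambda\bFl]$ in $K(X,\bFl)$; only then do Definition \ref{def:ccforbql} and the commutation of $d_\bullet$ with $Rf_\ast$ (Theorem \ref{thm:cdeforetalesheaf}) give $cc_X\F=cc_X\F'$ and $cc_YRf_\ast\F=cc_YRf_\ast\F'$, so that the $\bQl$-statement applies. Your justification --- that characteristic cycles, $Rf_\ast$ and $f_\ast$ are ``compatible with reduction'' --- goes in the wrong direction: the decomposition homomorphism maps $K(X,\bQl)$ to $K(X,\bFl)$, and Lemma \ref{reductionCC} together with the commutativity of the reduction squares only says that a lift, once it exists, behaves consistently; it does not produce one, and surjectivity of $d_X$ on scheme-level Grothendieck groups is not available to be quoted. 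The paper supplies exactly this missing step: by devissage and additivity of $CC$ one reduces to $\F=j_!\G$ with $\G$ smooth on a locally closed subscheme $U$; the monodromy of $\G$ then factors through a finite group $G$, and Serre's theorem that the decomposition map $K(G,E)\to K(G,\Lambda)$ is surjective provides the required virtual lift $\G'$, viewed in $K(U,\bQl)$. Without this devissage-plus-Brauer input your reduction of the finite case does not go through; with it, your proof becomes the paper's.
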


\begin{proof}
First, we prove this corollary when $\Lambda=\overline{\mathbb Q}_\ell$.
Let $\chi$ be a continuous character $\pi_1(Y)^{\rm ab}\to\Lambda^\times$ and $\chi_0$ the virtual representation $\chi-[\Lambda]$.
By the projection formula \cite[Expos\'e XVII, Proposition 5.2.9]{SGA4T3}, we have 
\begin{align}\label{eq:corYZ:2:2}
R\Gamma(Y_{\bar k}, Rf_\ast\mathcal F\otimes\chi_0)&=R\Gamma(Y_{\bar k}, Rf_\ast(\mathcal F\otimes f^\ast\chi_0))\\
\nonumber &=R\Gamma(X_{\bar k},\mathcal F\otimes f^\ast\chi_0)).
\end{align}
By \cite[Theorem 1]{Kato_Saito:1983}, we have a commutative diagram
\begin{align}\label{eq:corYZ:2:3}
\begin{gathered}
\xymatrix{
CH_0(X)\ar[d]_-{f_\ast}\ar[r]&\pi_1^{\rm ab}(X)\ar[d]^-{f_\ast}\ar[rrd]^-{f^\ast\chi}\\
CH_0(Y)\ar[r]&\pi_1^{\rm ab}(Y)\ar[rr]^-{\chi}&&\Lambda^\times
}
\end{gathered}
\end{align}
Now we have
\begin{align}
\chi(-cc_Y Rf_*\F)&\overset{{\rm Thm.} \ref{thm:2}}{=}\det(-\mathrm{Frob}_k;R\Gamma(Y_{\bar{k}}, Rf_*\F\otimes\chi_0))\\
\nonumber&\overset{\eqref{eq:corYZ:2:2}}{=}\det(-\mathrm{Frob}_k;R\Gamma(X_{\bar{k}}, \F\otimes f^*\chi_0))\\
\nonumber&\overset{{\rm Thm.} \ref{thm:2}}{=}(f^\ast\chi)(-cc_X\mathcal F)\\
\nonumber&\overset{\eqref{eq:corYZ:2:3}}{=}\chi(-f_*cc_X\F),
\end{align}
where $cc_Y Rf_*\F$ and $f_*cc_X\F$ also denote their images under the reciprocity map $CH_0(Y)\to\pi_1^{\rm ab}(Y)$ respectively.

The multiplicative group $\Lambda^\times$ contains $\mathbb Q/\mathbb Z$. Since the equality $\chi(-cc_Y Rf_*\F)=\chi(-f_*(cc_X\F))$ holds for all characters of $\pi_1^{\rm ab}(Y)$,
by the injectivity of the reciprocity map $CH_0(Y)\to\pi_1^{\rm ab}(Y)$ in \cite[Theorem 1]{Kato_Saito:1983}, we have $f_*(cc_X\F)=cc_Y Rf_*\F$. This finishes the proof in this case.

When $\Lambda$ is a finite field, this is reduced to the above case in the following way.
By devissage and additivity of  characteristic cycles \cite[Lemma 5.13]{Saito}, it is reduced to the case $\F=j_!\G$ for a locally closed immersion $j\colon U\to X$ and a locally constant and constructible sheaf $\G$ of $\Lambda$-modules on $U$. Then there is a finite \'etale covering  $V\to U$ of Galois group $G$ such that $\mathcal G|_{V}$ is constant. Then $\mathcal G$ corresponds  to a $\Lambda[G]$-module, which we still denote by $\mathcal G$.
Let $E$ be a finite extension of $\mathbb Q_\ell$ such that the residue field  of the ring $\mathcal O_E$ of integers  of $E$ equals to $\Lambda$.
Since the decomposition map $d\colon K(G,E)\to K(G,\Lambda)$ is surjective (cf. \cite[Chapter 16, Theorem 33]{Serre97}),
 we may find an element $\mathcal G^\prime$ in $K(G,E)$ such that $d(\mathcal G^\prime)=\mathcal G$.
We can also view $\mathcal G^\prime$ as an element in $K(U,\overline{\mathbb Q}_\ell)$ and $\mathcal G^\prime$ is a linear combination of the classes of smooth $\overline{\mathbb Q}_\ell$-adic sheaves on $U$.
%
By Definition \ref{def:ccforbql} and \eqref{eq:cdethm:1}, 
we have $CC\F=CCj_!\G'$ and $CC Rf_*\F=CC Rf_*j_!\G'$.
Hence it is reduced to the above case where $\Lambda=\overline{\mathbb Q}_\ell$.
\end{proof}
\begin{corollary}\label{cor:blowupcc}
Let $X$ and $Y$ be smooth projective connected schemes over a finite field $k$ and let $i\colon Y\hookrightarrow X$ be a closed immersion of codimension $r\geq 1$.
Let $\pi\colon \widetilde{X}\to X$ be the blow up of $X$ along $Y$ and $\mathcal F\in D_c^b(X,\Lambda)$.
Then we have an equality in $CH_0(X)$:
\begin{align}\label{eq:blowupcc}
\pi_\ast(cc_{\widetilde{X}}\pi^\ast\mathcal F)=cc_X\mathcal F+ (r-1)\cdot i_\ast(cc_Y(i^\ast\mathcal F)).
\end{align}
\end{corollary}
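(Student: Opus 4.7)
The plan is to combine Corollary \ref{corYZ:2} (compatibility of $cc$ with proper push-forward) with the decomposition of $R\pi_\ast\pi^\ast\mathcal F$ furnished by Lemma \ref{lem:blowupsheaf}. Since $\pi\colon\widetilde{X}\to X$ is a proper map between smooth projective connected schemes over the finite field $k$, Corollary \ref{corYZ:2} gives
\begin{equation*}
\pi_\ast(cc_{\widetilde X}\pi^\ast\mathcal F)=cc_X(R\pi_\ast\pi^\ast\mathcal F)\qquad\text{in } CH_0(X).
\end{equation*}
Thus it suffices to compute the right hand side.

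Next, the distinguished triangle \eqref{eq:blowupsheaf:0} of Lemma \ref{lem:blowupsheaf} yields the equality
\begin{equation*}
[R\pi_\ast\pi^\ast\mathcal F]=[\mathcal F]+\sum_{t=1}^{r-1}[(i_\ast i^\ast\mathcal F)(-t)[-2t]]\qquad\text{in }K(X,\Lambda).
\end{equation*}
Since the characteristic class factors through the Grothendieck group (via additivity of $CC$, cf.\ \cite[Lemma 5.13]{Saito} and Definition \ref{def:ccforbql}), I may apply $cc_X$ term-by-term. The shift by $-2t$ contributes a sign $(-1)^{-2t}=1$, and the Tate twist $(-t)$ leaves the characteristic cycle unchanged, because the singular support and the Milnor formula \eqref{eqYZ:milnor} depend only on the underlying \'etale sheaf and not on a Frobenius twist (the twist is smooth of rank $1$ and geometrically trivial). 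Therefore
\begin{equation*}
cc_X(R\pi_\ast\pi^\ast\mathcal F)=cc_X\mathcal F+(r-1)\cdot cc_X(i_\ast i^\ast\mathcal F).
\end{equation*}

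Finally, applying Corollary \ref{corYZ:2} a second time to the closed immersion $i\colon Y\hookrightarrow X$ (both $X$ and $Y$ being smooth projective over $k$ by hypothesis) gives
\begin{equation*}
cc_X(i_\ast i^\ast\mathcal F)=i_\ast(cc_Y(i^\ast\mathcal F)),
\end{equation*}
and combining the three displays establishes \eqref{eq:blowupcc}. The only step that requires a small verification is the invariance of $cc_X$ under Tate twist and even shifts, which is a formal consequence of the characterization of $CC$ via the Milnor formula; there are no genuine obstacles, the result being essentially a direct corollary of Corollary \ref{corYZ:2} and Lemma \ref{lem:blowupsheaf}.
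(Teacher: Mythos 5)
Your proof is correct and follows essentially the same route as the paper: the paper likewise applies $cc_X$ to the distinguished triangle of Lemma \ref{lem:blowupsheaf} and then invokes Corollary \ref{corYZ:2} (for both $\pi$ and $i$) to conclude. Your extra verification that the Tate twists and even shifts leave the characteristic class unchanged is a detail the paper leaves implicit, and it is correct.
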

\begin{proof}
By Lemma \ref{lem:blowupsheaf}, we have
\begin{align}
cc_{{X}}(R\pi_\ast\pi^\ast\mathcal F)=cc_X(\mathcal F)+(r-1)\cdot cc_X(i_\ast i^\ast\mathcal F).
\end{align}
Then \eqref{eq:blowupcc} follows from  Corollary \ref{corYZ:2}.
\end{proof}
\section{Swan class}\label{sec:sc}
\subsection{}Let $X$ be a proper smooth and connected scheme  over a perfect field $k$. Let $j\colon U\rightarrow X$ be an open dense sub-scheme of $X$ and $\mathcal F$ a locally constant and constructible sheaf of $\Lambda$-modules on $U$. 
There are two kinds of Swan classes of $\mathcal F$. 
In \cite{Kato_Saito}, Kato and T. Saito defined the Swan class $\Sw_X^{\rm ks}(\mathcal F)\in {\rm CH}_0(X-U)\otimes_{\mathbb Z}\mathbb Q$ by using logarithmic product and alteration. Later  in \cite[Definition 6.7.3]{Saito}, T. Saito defined another Swan class $\Sw_X^{\rm cc}(\mathcal F) \in {\rm CH}_0(X-U)$ by using characteristic cycle
\begin{equation}\label{eq:sc:cc}
\Sw_X^{\rm cc}(\mathcal F):=(T^\ast_XX, CC(j_!\mathcal F)-{\rm rank}\mathcal F\cdot CC(j_! \Lambda))_{T^\ast X} .
\end{equation}
 Note that $\Sw_X^{\rm cc}(\mathcal F)$ has integral coefficients since $CC(j_!\mathcal F)$ has integral coefficients by \cite[Theorem 5.18]{Saito}.
T.~Saito formulated the following conjecture:
\begin{conjecture}[{\cite[Conjecture 6.8.2]{Saito}}]\label{conj:sc:ss}Let $X$ be a proper smooth and connected scheme  over a perfect field $k$. Let $U$ be an open dense sub-scheme of $X$ and $\mathcal F$ a locally constant  and constructible  sheaf of $\Lambda$-modules on $U$. Then
we have an equality in $CH_0(X-U)$:
\begin{equation}\label{eq:sc:ss}
 \Sw_X^{\rm ks}(\mathcal F)=- \Sw_X^{\rm cc}(\mathcal F).
\end{equation}
\end{conjecture}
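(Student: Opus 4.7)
The plan is to reduce the equality (\ref{eq:sc:ss}) in several steps to a concrete geometric configuration where both Swan classes can be computed directly, and then to pass back via push-forward compatibility. As the introduction hints, the strategy is realistic only for surfaces over a finite field; I will indicate where the general conjecture runs into obstructions.

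First I would combine additivity with Brauer induction on the monodromy representation of $\mathcal{F}$. Both invariants are additive in short exact sequences of sheaves. The class $\Sw^{\rm cc}$ is compatible with proper push-forward by Corollary \ref{corYZ:2}, and $\Sw^{\rm ks}$ has an analogous functoriality built into its definition via logarithmic products and alterations in \cite{Kato_Saito}. After passing to a finite étale Galois cover $V\to U$ trivializing $\mathcal{F}$, Brauer's theorem expresses the representation of the Galois group as a $\mathbb{Z}$-linear combination of characters induced from cyclic subgroups. This reduces the verification of (\ref{eq:sc:ss}) to the case where $\mathcal{F}$ is a smooth sheaf of rank one on $U$, trivialized by a cyclic étale cover with Galois group $G$. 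Splitting $G$ into its $p$-primary and prime-to-$p$ parts and using that both Swan classes are supported on the wild part, the essential case is $G$ cyclic of order $p$.

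Next, for $G = \mathbb{Z}/p\mathbb{Z}$, any two non-trivial characters $\chi,\chi'$ of $G$ have the same order, so by the theorem of Saito and Yatagawa (\cite[Theorem 0.1]{Saito_Yatagawa}) the corresponding smooth sheaves of rank one on $U$ have the same characteristic cycle, and hence the same $\Sw^{\rm cc}$. The same invariance is clear for $\Sw^{\rm ks}$, because in the order $p$ case the logarithmic Artin conductor depends only on the break datum, which is determined by the order of the character. Thus it suffices to check (\ref{eq:sc:ss}) for a single non-trivial character of $G$, and by summing over all non-trivial characters and using additivity this is equivalent to checking (\ref{eq:sc:ss}) in the special case $\mathcal{F} = f_*\Lambda - \Lambda$, where $f\colon V\to U$ is an étale $G$-cover. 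The third step is then a direct computation for $\mathcal{F} = f_*\Lambda$: choose a smooth proper compactification $\bar V$ of $V$ with $\bar V\setminus V$ a simple normal crossing divisor (this is where resolution of singularities is invoked, and is automatic for surfaces), and extend $f$ to a proper morphism $\bar f\colon \bar V\to X$. The characteristic cycle of the tamely ramified sheaf $j'_!\Lambda$ on $\bar V$ (with $j'\colon V\hookrightarrow \bar V$) is explicitly given in terms of the boundary, and Corollary \ref{corYZ:2} applied to $\bar f$ transports this cycle to $X$ in a way that lets one read off $\Sw^{\rm cc}_X(f_*\Lambda)$. On the other hand, for such an étale cover the Kato--Saito Swan class is computed in \cite{Kato_Saito} from the same ramification data along the boundary of $\bar V$. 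Matching the two local expressions closes the argument.

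The main obstacle is the third step: in dimension greater than two one needs both resolution of singularities to produce $\bar V$ with SNC boundary, and a form of the push-forward compatibility of characteristic cycles that applies to $\bar f$. This is precisely the ``special case of proper push-forward'' mentioned in the introduction as missing in general, and it is what forces the restriction to surfaces in Corollary \ref{cor:intr2}. A secondary subtlety is that $\Sw^{\rm ks}$ is a priori only defined with rational coefficients, while $\Sw^{\rm cc}$ is integral, so the Brauer-induction reduction of step one must be carried out with care to ensure the final integral identity (\ref{eq:sc:ss}) can actually be extracted. A third, more technical point, is checking that the Saito--Yatagawa invariance of step two is genuinely shared by $\Sw^{\rm ks}$: for characters of order exactly $p$ this follows from the explicit formula for the logarithmic conductor, but one should verify the comparison in a way that does not break under the base change to an algebraic closure used implicitly in Brauer induction.
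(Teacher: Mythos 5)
You are proposing a proof of what the paper states only as a conjecture (quoted from \cite[Conjecture 6.8.2]{Saito}); what the paper actually proves is the weaker Theorem \ref{thm:sc:surf} (equality in $CH_0(X)$ rather than $CH_0(X-U)$, and only under Conjecture \ref{conj:sc:pf} and resolution of singularities) and its unconditional surface case Corollary \ref{cor:2surf}. You correctly recognize this and aim for the weak version, and your overall skeleton---Brauer induction to rank-one sheaves trivialized by a cyclic cover, invariance under exchanging characters of equal order via Saito--Yatagawa, passage to $f_*\Lambda$ and a direct boundary computation using SNC compactifications---does match the paper's argument.

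There is, however, a genuine gap in the reduction step. After Brauer induction you claim that ``splitting $G$ into its $p$-primary and prime-to-$p$ parts'' reduces matters to $G$ cyclic of order $p$. This is not so: the $p$-primary part of a cyclic group can have order $p^n$ with $n\geq 2$, and discarding the tame part does nothing to lower $n$. The paper treats the case $n\geq 2$ by a separate non-trivial induction: it takes $H\subset G$ the unique subgroup of order $p$, introduces the intermediate cover $V\to U'\to U$, uses the isomorphism $h_*h^*\mathcal F\simeq\bigoplus_\psi(\mathcal F\otimes\mathcal F_\psi)$ with $\psi$ running over characters of $G/H$, and observes via Saito--Yatagawa that each $\chi\cdot\psi$ has the same order $p^n$ as $\chi$, so that
\[
p^{n-1}\,\Sw_X^{\bullet}(\mathcal F)=\bar h_*\Sw^{\bullet}(\mathcal F_{\chi_H})+\Sw_X^{\bullet}(h_*\Lambda),
\]
from which the $n\geq 2$ case follows from $n=1$ and Lemma \ref{lem:sc:pfcc}. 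You cannot simply ``sum over all non-trivial characters'' in the $p^n$ case, since non-faithful characters of $G$ do not share the wild ramification of a faithful one; your proposal does not supply this inductive mechanism and, as written, would not close. A second, smaller issue: you invoke push-forward functoriality for $\Sw^{\rm ks}$ as something ``built into its definition,'' but the precise compatibility needed is \cite[Corollary 4.3.4]{Kato_Saito} (used as (\ref{eq:sc:pft:4}) in the paper), and the matching formula $\Sw^{\rm ks}_X(f_*\Lambda)=-\Sw^{\rm cc}_X(f_*\Lambda)=d^{\log}_{V/U}$ requires the explicit \cite[Proposition 3.4.10]{Kato_Saito} together with the characteristic-cycle computation for $j'_!\Lambda$; these are what Lemma \ref{lem:sc:pfcc} assembles. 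Your remaining caveats about integrality of $\Sw^{\rm ks}$ and about base change under Brauer induction are reasonable observations, but the reduction from $p^n$ to $p$ is the step that must actually be supplied.
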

\subsection{}In this section, we show in Theorem \ref{thm:sc:surf} that the above equality (\ref{eq:sc:ss}) is true in $CH_0(X)$ assuming the resolution of singularities and the following special case of proper push-forward of characteristic class (cf. \cite[Section 6.2]{Saito}).
\begin{conjecture}\label{conj:sc:pf}
Let $X$ be a fixed  proper smooth and connected scheme over a perfect field $k$.
For any  smooth proper scheme $Y$ over  $k$, any $F\in D_c^b(Y,\Lambda)$, and any  generically finite and surjective morphism $f\colon Y\rightarrow X$,  we have an equality in $CH_0(X)$:
\begin{equation}\label{eq:sc:pf}
cc_X(Rf_\ast \mathcal F)=f_\ast cc_Y(\mathcal F).
\end{equation}
\end{conjecture}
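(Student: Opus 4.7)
The plan is to prove the stronger, cycle-level identity
\[
f_\ast CC(\mathcal F) \;=\; CC(Rf_\ast \mathcal F) \qquad \text{in } Z_d(T^\ast X),
\]
where $d=\dim X$ and $f_\ast$ denotes push-forward of $d$-cycles along the canonical correspondence $T^\ast Y \xleftarrow{df} T^\ast X\times_X Y \xrightarrow{\pr} T^\ast X$. Pairing both sides with the zero section $0_X$ and using \eqref{eqYZ:ccdef} then yields \eqref{eq:sc:pf}. The point of working at the level of cycles is that the identity is local on $T^\ast X$ and involves no class field theory, so it is a statement that makes sense, and is potentially tractable, over an arbitrary perfect field.

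I would first reduce to $\Lambda=\bQl$ through the decomposition homomorphism $d_X$ of Subsection~\ref{sub:defidecomp}, using its compatibility with $Rf_\ast$ from Theorem~\ref{thm:cdeforetalesheaf}(1) together with the defining relation $CC=CC\circ d_X$ from Definition~\ref{def:ccforbql}. Assuming resolution of singularities, a Hironaka-style elimination of indeterminacy lets one factor $f$ as a finite sequence of blow-ups along smooth centres followed by a finite surjective morphism $g\colon Y'\to X$ between smooth projective schemes of the same dimension. The blow-up case is handled by Corollary~\ref{cor:blowupcc}, whose proof via Lemma~\ref{blowup} and Lemma~\ref{lem:blowupsheaf} is purely geometric and does not use the finiteness of $k$. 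So the whole problem reduces to the case where $f$ itself is finite and surjective.

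For such a finite $g$, the correspondence above is generically étale and dominant, whence $\dim g_\ast(SS\mathcal F)\leq \dim X$; this is precisely the dimensional hypothesis in T.~Saito's announcement \cite{Saito17b} on proper push-forward of characteristic cycles along projective morphisms between smooth projective schemes over a perfect field, so invoking \cite{Saito17b} closes the argument. The main obstacle is therefore to produce a proof of the finite case that is \emph{independent} of \cite{Saito17b}: one would need a local Milnor-type formula describing how $CC$ transforms under a wildly ramified finite covering of smooth schemes of dimension $\geq 2$, and in the absence of an analog of Artin--Schreier--Witt theory on higher-dimensional bases this would seem to require a direct geometric construction of vanishing cycles along the ramification locus of $g$, which at present is the genuinely open input.
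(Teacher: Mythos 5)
The statement you have set out to prove is labelled as a \emph{Conjecture} in the paper, and the paper does not prove it. What the paper records is: (i) it is a consequence of T.~Saito's general push-forward conjecture \cite[Conjecture 1]{Saito17}; (ii) when $f$ is finite on the support of $\mathcal F$, it is \cite[Lemma 2]{Saito17}; and (iii) when $k$ is a finite field and $X$ is a proper smooth surface --- and hence projective --- it follows from Corollary~\ref{corYZ:2}, which is the paper's main theorem. This last case is the only one the paper needs, and is what powers Corollary~\ref{cor:2surf}; there is no general proof in the paper to compare yours against. The conjecture is an explicit hypothesis of Theorem~\ref{thm:sc:surf}, not one of its conclusions.

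Your proposal has a concrete gap at the factorization step. A generically finite surjective morphism $f\colon Y\to X$ between proper smooth schemes does not factor, even assuming resolution of singularities, as a composition of blow-ups along smooth centres followed by a finite surjective morphism between smooth schemes. Stein factorization $Y\to X'\to X$ gives a finite $X'\to X$ with $X'$ in general singular; after resolving $X'$ one is left with a proper birational morphism of smooth schemes, and in positive characteristic this cannot be decomposed into blow-ups along smooth centres (weak factorization is not available). So Corollary~\ref{cor:blowupcc} cannot be used to reduce to the finite case. You are also misreading the proof of Corollary~\ref{cor:blowupcc}: it does not deduce the blow-up formula purely from Lemma~\ref{blowup} and Lemma~\ref{lem:blowupsheaf}; rather it uses Lemma~\ref{lem:blowupsheaf} together with Corollary~\ref{corYZ:2}, which requires $k$ finite and $X,Y$ projective. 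The purely geometric Lemma~\ref{blowup} does give a cycle-level blow-up identity, but only under the hypothesis that $\pi$ and $i$ are properly $SS\F$-transversal, which one cannot impose for an arbitrary centre. Finally, even for the finite case, \cite{Saito17b} is stated for projective morphisms between smooth \emph{projective} schemes under a dimensional hypothesis on the image of the singular support, while Conjecture~\ref{conj:sc:pf} is formulated for proper smooth schemes, which in dimension $\ge 3$ need not be projective. You are right that the finite case independent of \cite{Saito17b} is genuinely open; but since the factorization does not go through, your argument does not reduce to that case either, and so the conjecture remains open modulo your proposal as well.
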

\subsection{}In \cite[Conjecture 1]{Saito17}, T. Saito formulated a very general conjecture on the proper push-forward of characteristic cycle. 
The above Conjecture \ref{conj:sc:pf} can be deduced from his conjecture. By \cite[Lemma 2]{Saito17}, if $f$ is finite on the support of $\mathcal F$, then (\ref{eq:sc:pf}) holds.  
If $k$ is a finite field and if $X$ is a smooth and proper surface, then  (\ref{eq:sc:pf})  is a consequence of Corollary \ref{corYZ:2} since any proper smooth surface is projective. 
\subsection{}
Now, let us  recall the definition of $\Sw_X^{\rm ks}(\mathcal F)$. In the following, we assume that $\Lambda$ is a finite field. The case $\Lambda=\overline{\mathbb Q}_{\ell}$ can be reduced to the finite field case by using \cite[Lemma 4.2.7]{Kato_Saito} and Definition \ref{def:ccforbql}.
Let $X$ be a proper smooth and connected scheme of dimension $d$ over a perfect field $k$ of characteristic $p$. Let $U$ be an open dense sub-scheme of $X$ and $\mathcal F$ a locally constant and   constructible sheaf of $\Lambda$-modules on $U$. Let $f\colon V\rightarrow U$ be a finite \'etale Galois covering of Galois group $G$ trivializing $\mathcal F$. Let $M$ be the ${\Lambda}$-representation of $G$
corresponding to $\mathcal F$. 

By Nagata's compactification theorem \cite{Nagata}, we may find a proper scheme $Y$ which contains $V$ as an open dense subscheme. Replacing $Y$ by the closure of the graph of $f\colon V\to U$ in $X\times Y$, we may assume there is a commutative diagram
\begin{equation}\label{eq:sc:diag00}
\begin{gathered}
\xymatrix{
V\ar[r]\ar[d]&Y\ar[d]\\
U\ar[r]&X.
}
\end{gathered}
\end{equation}
Since $V$ is proper over $U$ and is dense in $U\times_XY$, the above diagram \eqref{eq:sc:diag00} is Cartesian.
By \cite[Lemma 3.2.1]{Kato_Saito}, we can construct a commutative diagram 
\begin{equation}\label{eq:sc:altdig}
\begin{gathered}
\xymatrix{
W\ar@/_2pc/[ddd]_{h}\ar[d]_{g}\ar[rr]^{j^{\prime\prime}}&&Z\ar[d]^{\bar g}\ar[ldd]\ar@/^2pc/[ddd]_{\bar h}\\
V\ar[rr]^{j^\prime}\ar[dd]_f&&Y\ar[dd]^{\bar f}\\
&X^\prime\ar[rd]&\\
U\ar[ru]\ar[rr]^j&&X
}
\end{gathered}
\end{equation}
of schemes over $k$ satisfying the following properties:
\begin{enumerate}
\item $U$ is the complement of a Cartier divisor $B$ of $X^\prime$ and the map $X^\prime\rightarrow X$ is an isomorphism on $U$.
\item $Z$ is smooth over $k$ and $W$ is the complement of a divisor $D$ with simple normal crossings. Let $\{D_i\}_{i\in I}$ be the irreducible components of $D$.
\item The map $\bar g\colon Z\rightarrow Y$ is proper, surjective and generically finite. 
\item The squares are Cartesian.
\end{enumerate}
The log blow up $(Z\times Z)^\prime\rightarrow Z\times Z$ is defined to be the blow up of $Z\times Z$ at $D_i\times D_i$ for all $i\in I$ (cf. \cite[Definition 1.1.1]{Kato_Saito}). For $i\in I$, let $(D_i\times Z)^\prime$ be the proper transform 
of $D_i\times Z$ and $(Z\times D_i)^\prime$ be that of $Z\times D_i$. We put $D^{(1)\prime}=\bigcup_{i\in I}(D_i\times Z)^\prime$ and $D^{(2)\prime}=\bigcup_{i\in I}(Z\times D_i)^\prime$.
We define $(Z\times Z)^\sim=(Z\times Z)^\prime\setminus \{D^{(1)\prime}\cup D^{(2)\prime}\}$, and call it the log self-product of $Z$ with respect to $D$.
By the universal property of blow up, the diagonal map $Z\rightarrow Z\times Z$ induces a map $Z=\Delta_Z^{\rm log}\rightarrow (Z\times Z)^\sim$, which is called the log diagonal map.
Similarly, we can define the log self-product $(X^\prime\times X^\prime)^\sim$ of $X^\prime $ with respect to the Cartier divisor  $B$ (cf. \cite[Definition 1.1.1]{Kato_Saito}). We have a canonical map $(Z\times Z)^\sim\rightarrow (X^\prime\times X^\prime)^\sim$. 
We put
 \[(Z\times_{X^\prime}Z)^\sim=(Z\times Z)^\sim\times_{(X^\prime\times X^\prime)^\sim}\Delta_{X^\prime}^{\rm log}.\]
For $\sigma\in G$, let $\Gamma_\sigma\subseteq V\times_UV$ be the graph of $\sigma$. We consider the Gysin map
\[ CH_d(V\times_UV\setminus \Delta_V)=\bigoplus_{\sigma\in G\setminus\{1\}}\mathbb Z\cdot \Gamma_\sigma
\xrightarrow[]{\quad (g\times g)^!\quad} CH_d(W\times_UW\setminus W\times_VW).\]
Now  the Swan class $\Sw_X^{\rm ks}(\mathcal F)\in {\rm CH}_0(X-U)\otimes_{\mathbb Z}\mathbb Q$ is defined  by (cf. \cite[Definition 4.2.6]{Kato_Saito})
\begin{align}\label{eq:sc:pft:2} 
\Sw_X^{\rm ks}(\mathcal F)=\frac{1}{[W:U]}\bar{h}_\ast\left\{\sum_{\sigma\neq 1}\left(\dim_\Lambda M-\dim_\Lambda M^{\sigma}
+\frac{\dim_\Lambda M^{\sigma^p}/M^\sigma}{p-1}\right)\cdot\left(\widetilde{\Gamma}_\sigma,\Delta_{ Z}^{\rm log}\right)_{(Z\times Z)^\sim}\right\}
\end{align}
where $\widetilde{\Gamma}_\sigma\in CH_d((Z\times_{X^\prime}Z)\setminus W\times_VW)$ is a lifting of $(g\times g)^!\Gamma_\sigma$. If the order of $\sigma$ is not a power of $p$, then $\left(\widetilde{\Gamma}_\sigma,\Delta_{ Z}^{\rm log}\right)_{(Z\times Z)^\sim}=0$ by \cite[Lemma 4.1.2.2]{Kato_Saito}.

\begin{lemma}\label{lem:sc:pfcc}
Let $X$ be a proper smooth and connected scheme  over a perfect field $k$ of characteristic $p$. Let $U$ be an open dense sub-scheme of $X$.
Consider a Cartesian diagram
\begin{align}\label{eq:sc:pfcc:1}
\begin{gathered}
\xymatrix{
V\ar[r]^{j^\prime}\ar[d]_{f}&Y\ar[d]^{\bar f}\\
U\ar[r]^{j}&X
}
\end{gathered}
\end{align}
of smooth and connected schemes over $k$, where $f$ is a finite \'etale morphism, $j$ and $j^\prime$ are open immersions. Let $\mathcal G$ be a locally constant and constructible  sheaf of $\Lambda$-modules on $V$. 
Assume Conjecture \ref{conj:sc:pf} is true for $X$,
then we have the following equalities:
\begin{align}
\label{eq:sc:pft:4}\Sw_X^{\rm ks}(f_\ast\mathcal G)&={\bar f}_\ast \Sw_Y^{\rm ks}(\mathcal G)+{\rm rank}\mathcal G\cdot \Sw_X^{\rm ks}(f_\ast \Lambda)\in CH_0(X- U),\\
\label{eq:sc:pft:5}\Sw_X^{\rm cc}(f_\ast\mathcal G)&={\bar f}_\ast \Sw_Y^{\rm cc}(\mathcal G)+{\rm rank}\mathcal G\cdot \Sw_X^{\rm cc}(f_\ast \Lambda)\in CH_0(X).
\end{align}
If moreover $D=Y\setminus V$ (respectively $B=X\setminus U$) is a simple normal crossing divisor on $X$ (respectively $Y$), then we have
\begin{align}
\label{eq:sc:pft:6}\Sw_X^{\rm ks}(f_\ast \Lambda)&=-\Sw_X^{\rm cc}(f_\ast \Lambda)=d^{\rm log}_{V/U}\in CH_0(X)
\end{align}
where $d^{\rm log}_{V/U}$ is the wild discriminant of $V$ over $U$ (cf. \cite[Definition 4.3.1]{Kato_Saito}).
\end{lemma}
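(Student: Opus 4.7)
The identity \eqref{eq:sc:pft:5} will be derived formally from Conjecture \ref{conj:sc:pf}. Since \eqref{eq:sc:pfcc:1} is Cartesian with $f$ finite \'etale and $j,j'$ open immersions, the base change isomorphism gives $j_!(f_\ast\mathcal G)\simeq\bar f_\ast(j'_!\mathcal G)$ and likewise for $\Lambda$. Together with $\rank(f_\ast\mathcal G)=[V:U]\cdot\rank\mathcal G$, expanding the definition \eqref{eq:sc:cc} bilinearly yields
\[
\Sw_X^{\mathrm{cc}}(f_\ast\mathcal G)-\rank\mathcal G\cdot \Sw_X^{\mathrm{cc}}(f_\ast\Lambda)=cc_X(\bar f_\ast j'_!\mathcal G)-\rank\mathcal G\cdot cc_X(\bar f_\ast j'_!\Lambda).
\]
Conjecture \ref{conj:sc:pf} applied to $\bar f\colon Y\to X$ (which is generically finite surjective between proper smooth schemes) identifies the right hand side with $\bar f_\ast\bigl(cc_Y(j'_!\mathcal G)-\rank\mathcal G\cdot cc_Y(j'_!\Lambda)\bigr)=\bar f_\ast\Sw_Y^{\mathrm{cc}}(\mathcal G)$.

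For \eqref{eq:sc:pft:4} I reduce to a character-theoretic identity. Let $\widetilde V\to U$ be a finite Galois cover trivializing $f_\ast\mathcal G$ that factors through $V\to U$; set $G=\mathrm{Gal}(\widetilde V/U)$ and $H=\mathrm{Gal}(\widetilde V/V)$. Then $\mathcal G$ corresponds to an $H$-module $M$, $f_\ast\mathcal G$ to $\mathrm{Ind}_H^G M$, and $f_\ast\Lambda$ to $\mathrm{Ind}_H^G\Lambda$. The Mackey-type formula $\dim(\mathrm{Ind}_H^G M)^\sigma=\sum_{[g]\in (G/H)^\sigma}\dim M^{g^{-1}\sigma g}$ (and its analogue for $\sigma^p$) yields the decomposition
\[
s(\sigma,\mathrm{Ind}_H^G M)=\dim M\cdot s(\sigma,\mathrm{Ind}_H^G\Lambda)+\sum_{[g]\in (G/H)^\sigma}s(g^{-1}\sigma g,M),
\]
where $s(\sigma,N):=\dim N-\dim N^\sigma+\frac{\dim N^{\sigma^p}/N^\sigma}{p-1}$ is the coefficient appearing in \eqref{eq:sc:pft:2}. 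Substituting into \eqref{eq:sc:pft:2}, the first summand contributes $\rank\mathcal G\cdot \Sw_X^{\mathrm{ks}}(f_\ast\Lambda)$, while the second summand, after pushing the log-graph cycles $\widetilde\Gamma_{g^{-1}\sigma g}$ through $W\to V\to U$ and invoking the independence of the Kato--Saito Swan class on the choice of alteration diagram \eqref{eq:sc:altdig} (shared between $X$ and $Y$), reassembles into $\bar f_\ast\Sw_Y^{\mathrm{ks}}(\mathcal G)$.

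For \eqref{eq:sc:pft:6}, under the SNC hypothesis both Swan classes of $f_\ast\Lambda$ admit closed-form descriptions. The Kato--Saito identification $\Sw_X^{\mathrm{ks}}(f_\ast\Lambda)=d^{\mathrm{log}}_{V/U}$ is a case of \cite[Section 4.3]{Kato_Saito}. For the characteristic-cycle Swan class, one uses the explicit formula for $CC(\bar f_\ast j'_!\Lambda)$ along an SNC boundary from \cite{Saito}, which combined with the definition \eqref{eq:sc:cc} gives $\Sw_X^{\mathrm{cc}}(f_\ast\Lambda)=-d^{\mathrm{log}}_{V/U}$; the minus sign is inherent in \eqref{eq:sc:cc}, compatibly with the formula $\deg\Sw_X^{\mathrm{cc}}(\mathcal H)=-\mathrm{Sw}(\mathcal H)$ one reads off from Grothendieck--Ogg--Shafarevich.

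The main obstacle is \eqref{eq:sc:pft:4}: matching the purely combinatorial induction identity for $s(\,\cdot\,,\mathrm{Ind}_H^G M)$ with the geometric construction involving alterations, log self-products, and log diagonals simultaneously on $X$, $Y$ and a common Galois cover requires careful verification that the resulting log-graph cycles line up under $\bar h_\ast$ and $\bar f_\ast$. By contrast \eqref{eq:sc:pft:5} is a bookkeeping consequence of Conjecture \ref{conj:sc:pf}, and \eqref{eq:sc:pft:6} reduces to citing known SNC computations and tracking sign conventions.
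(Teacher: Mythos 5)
Your treatment of \eqref{eq:sc:pft:5} is correct and is exactly what the paper does: expand the definition \eqref{eq:sc:cc}, use the base change isomorphism $j_!f_\ast\mathcal G\simeq\bar f_\ast j'_!\mathcal G$, then apply Conjecture \ref{conj:sc:pf} to $\bar f$.

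For \eqref{eq:sc:pft:4}, you try to re-derive the induction formula from scratch via a Mackey-type identity for $s(\sigma,\mathrm{Ind}_H^GM)$ and then trace the log-graph cycles through the alteration diagram. The paper does not do any of this: \eqref{eq:sc:pft:4} is literally \cite[Corollary 4.3.4]{Kato_Saito}, which is exactly the induction formula for the Kato--Saito Swan class. Your sketch is not wrong as a heuristic, but it is far from a proof (you yourself flag it as the ``main obstacle''); the point is that this is already a theorem in the reference, and you should cite it rather than attempt to reprove it inside a lemma whose content is elsewhere.

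The genuine gap is in \eqref{eq:sc:pft:6}, on the $\Sw^{\mathrm{cc}}$ side. You invoke ``the explicit formula for $CC(\bar f_\ast j'_!\Lambda)$ along an SNC boundary from [Saito]''. No such formula exists: $\bar f_\ast j'_!\Lambda=j_!f_\ast\Lambda$, and $f_\ast\Lambda$ can be \emph{wildly} ramified along $B$, so the SNC/tame computation of characteristic cycles does not apply to it directly. This is precisely why Conjecture \ref{conj:sc:pf} is needed even at this step. The correct route is: use Conjecture \ref{conj:sc:pf} to rewrite
\[
\Sw_X^{\mathrm{cc}}(f_\ast\Lambda)=cc_X(\bar f_\ast j'_!\Lambda)-[V:U]\cdot cc_X(j_!\Lambda)=\bar f_\ast\bigl(cc_Y(j'_!\Lambda)-\bar f^\ast cc_X(j_!\Lambda)\bigr),
\]
and only \emph{then} apply the SNC formula for the \emph{constant} sheaf on each of $Y$ and $X$, namely $cc_Y(j'_!\Lambda)=(-1)^d c_d(\Omega^1_{Y}(\log D))\cap[Y]$ and $cc_X(j_!\Lambda)=(-1)^d c_d(\Omega^1_{X}(\log B))\cap[X]$, where the tameness needed for these formulas holds because $\Lambda$ is constant. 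Comparing with the log-Chern-class expression for $\Sw_X^{\mathrm{ks}}(f_\ast\Lambda)$ from \cite[Proposition 3.4.10]{Kato_Saito} (which also gives the identification with $d^{\mathrm{log}}_{V/U}$) then yields $\Sw_X^{\mathrm{ks}}(f_\ast\Lambda)=-\Sw_X^{\mathrm{cc}}(f_\ast\Lambda)$. So you must invoke Conjecture \ref{conj:sc:pf} \emph{inside} the proof of \eqref{eq:sc:pft:6}, not merely in \eqref{eq:sc:pft:5}, and you cannot compute the characteristic cycle of the pushed-forward sheaf directly.
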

\begin{proof}
(\ref{eq:sc:pft:4}) follows from \cite[Corollary 4.3.4]{Kato_Saito}. (\ref{eq:sc:pft:5}) follows from (\ref{eq:sc:pf}).
By \cite[Proposition 3.4.10]{Kato_Saito}, we have
 \begin{align}
\label{eq:sc:pft:7}\Sw_X^{\rm ks}(f_\ast \Lambda)=(-1)^{d-1}{\bar f}_\ast\left\{c_{d, D}^Y\left(\Omega_{Y/F}^1({\rm log}D)-\bar{f}^\ast\Omega_{X/F}^1({\log}B)\right)\cap [Y]\right\}.
\end{align}
Since $\Sw_X^{\rm cc}(f_\ast\Lambda)=cc_X(j_!f_\ast\Lambda)-[V:U]\cdot cc_X(j_!\Lambda)=cc_X(\bar f_\ast j^\prime_!\Lambda)-[V:U]\cdot cc_X(j_!\Lambda)$, hence by (\ref{eq:sc:pf}) we have
 \begin{align}
\label{eq:sc:pft:8}\Sw_X^{\rm cc}(f_\ast\Lambda)=\bar f_\ast \left(cc_Y(j^\prime_!\Lambda)- \bar f^\ast cc_X(j_!\Lambda)\right).
\end{align}
Since $cc_Y(j^\prime_!\Lambda)=(-1)^d c_d(\Omega^1_{Y/F}({\rm log}D))\cap [Y]$ and $cc_X(j_!\Lambda)=(-1)^d c_d(\Omega^1_{X/F}({\rm log}D))\cap [X]$, by (\ref{eq:sc:pft:7}) and (\ref{eq:sc:pft:8}), we get that $\Sw_X^{\rm ks}(f_\ast \Lambda)=-\Sw_X^{\rm cc}(f_\ast \Lambda)$ as zero classes in $CH_0(X)$.
\end{proof}

\begin{theorem}\label{thm:sc:surf}Let $X$ be a proper smooth and connected scheme  over a perfect field $k$ of characteristic $p$. 
Let $U$ be an open dense sub-scheme of $X$ and $\mathcal F$ a locally constant and constructible sheaf of $\Lambda$-modules on $U$. 
Assume Conjecture \ref{conj:sc:pf} is true for $X$ and  assume the resolution of singularities, then
we have an equality in $CH_0(X)$:
\begin{equation}\label{eq:sc:ss2}
 \Sw_X^{\rm ks}(\mathcal F)=- \Sw_X^{\rm cc}(\mathcal F).
\end{equation}
\end{theorem}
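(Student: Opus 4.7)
The strategy is to follow the outline indicated in the introduction, reducing the statement by stages.

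\textbf{Step 1 (Reduction to rank one via Brauer induction).} By devissage and additivity of both Swan classes, it suffices to consider locally constant $\mathcal F$. Let $G$ be a finite quotient of $\pi_1(U)$ trivializing $\mathcal F$, so that $\mathcal F$ corresponds to a $\Lambda$-representation $M$ of $G$. By Brauer's induction theorem, $[M]$ is a $\mathbb Z$-linear combination of classes of the form $\mathrm{Ind}_H^G\chi$ for elementary subgroups $H \subseteq G$ and one-dimensional characters $\chi$ of $H$. Using the push-forward compatibilities \eqref{eq:sc:pft:4} and \eqref{eq:sc:pft:5} in Lemma \ref{lem:sc:pfcc} (which invoke Conjecture \ref{conj:sc:pf}), together with the additivity of $\Sw^{\rm ks}$ and $\Sw^{\rm cc}$, it is enough to treat rank one sheaves trivialized by a finite étale cyclic covering $f\colon V\to U$ of Galois group $G$.

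\textbf{Step 2 (Reduction to $G$ cyclic of order $p$).} Decompose $G = G_{\rm tame} \times G_p$ with $|G_p|$ a power of $p$. The tame part contributes identically to both sides by \eqref{eq:sc:pft:6} applied to the tame quotient (after resolving singularities of $X\setminus U$, which we may do by the assumption of resolution), so we reduce to $G$ a cyclic $p$-group. Using the filtration by subgroups of $G$ and repeatedly applying \eqref{eq:sc:pft:4}--\eqref{eq:sc:pft:5}, one further reduces to $G$ cyclic of order $p$.

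\textbf{Step 3 (Saito--Yatagawa and reduction to $f_*\Lambda$).} Let $G=\mathbb Z/p$. Any two nontrivial characters $\chi,\chi'$ of $G$ have the same order $p$, so by \cite[Theorem 0.1]{Saito_Yatagawa} the associated rank one sheaves $\mathcal L_\chi$ and $\mathcal L_{\chi'}$ have the same characteristic cycle, hence the same $\Sw^{\rm cc}$. Inspecting the explicit formula \eqref{eq:sc:pft:2}, the coefficient $\dim M - \dim M^\sigma + (\dim M^{\sigma^p}/M^\sigma)/(p-1)$ depends only on the order of $\sigma$ and on $\dim M$ when $M$ is one-dimensional, and the intersection-theoretic piece on $(Z\times Z)^\sim$ depends only on the field extension $V/U$ and not on the chosen character; hence the Kato--Saito class is also independent of the choice of nontrivial $\chi$. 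Since as virtual $\Lambda$-sheaves $f_*\Lambda - \Lambda = \sum_{\chi\neq 1}[\mathcal L_\chi]$ (after a harmless base field extension so that $\Lambda$ contains the $p$th roots of unity; this does not affect $CH_0(X)$), it is enough to prove the identity for the virtual sheaf $f_*\Lambda$.

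\textbf{Step 4 (The case $f_*\Lambda$ via resolution).} Using resolution of singularities, choose a proper birational map $\pi\colon X'\to X$ with $X'$ smooth such that $X'\setminus U$ is a simple normal crossing divisor; take the normalization of $X'$ in $V$ and resolve to obtain a smooth compactification $Y'\to X'$ of $V$ with $Y'\setminus V$ a simple normal crossing divisor. By Conjecture \ref{conj:sc:pf} applied to $\pi$ (and its analogue on the $Y$-side), both $\Sw_X^{\rm ks}(f_*\Lambda)$ and $\Sw_X^{\rm cc}(f_*\Lambda)$ are the proper push-forward under $\pi_*$ of the corresponding classes on $X'$; the same follows on the $\rm ks$-side from \cite[Corollary 4.3.4]{Kato_Saito}. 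On $X'$, the equality $\Sw_{X'}^{\rm ks}(f_*\Lambda) = -\Sw_{X'}^{\rm cc}(f_*\Lambda)$ is exactly \eqref{eq:sc:pft:6}.

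\textbf{Main obstacle.} The subtle point is step 3: transferring the Saito--Yatagawa equality for characteristic cycles to the Kato--Saito Swan class. One must verify directly from \eqref{eq:sc:pft:2} that both the combinatorial multiplicities and the class $(\widetilde\Gamma_\sigma,\Delta_Z^{\rm log})_{(Z\times Z)^\sim}$ are intrinsic to the covering $V/U$ and to the order of $\sigma$, not to the particular character. A secondary difficulty is that $\Sw^{\rm ks}$ lives in $CH_0(X\setminus U)\otimes\mathbb Q$ while $\Sw^{\rm cc}$ is integral: the equality in the theorem is asserted only after pushing forward to $CH_0(X)$, which is consistent with the rational denominators that may appear after alteration in step 4.
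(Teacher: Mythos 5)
Your overall route (Brauer induction, Lemma \ref{lem:sc:pfcc}, Saito--Yatagawa, and the case $f_*\Lambda$ via \eqref{eq:sc:pft:6}) is the same as the paper's, but your Step 2 contains a genuine gap: the claimed reduction from a cyclic $p$-group of order $p^n$ to a cyclic group of order $p$ is not achieved by ``the filtration by subgroups of $G$ and repeatedly applying \eqref{eq:sc:pft:4}--\eqref{eq:sc:pft:5}''. Those formulas only compare $\Sw_X(f_*\mathcal G)$ with the Swan class of a sheaf $\mathcal G$ living on a covering; a rank-one sheaf $\mathcal F$ on $U$ whose character $\chi$ has order $p^n$ is not itself a push-forward from an intermediate covering. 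The natural move is to restrict along $h\colon U'\to U$ (where $U'=V/H$, $H\subseteq G$ the subgroup of order $p$), so that $h^*\mathcal F$ has order $p$, and then push back down; but $h_*h^*\mathcal F\simeq \mathcal F\otimes h_*\Lambda\simeq\bigoplus_\psi \mathcal F\otimes\mathcal F_\psi$, a direct sum of twists of $\mathcal F$ by the characters $\psi$ of $G/H$, not a sum of copies of $\mathcal F$. To extract $\Sw_X(\mathcal F)$ from this you must know that both $\Sw^{\rm ks}$ and $\Sw^{\rm cc}$ are unchanged under these twists, i.e.\ you need the Saito--Yatagawa input \cite[Theorem 0.1]{Saito_Yatagawa} (and the corresponding inspection of the coefficients in \eqref{eq:sc:pft:2}) for characters of order $p^n$, not only for nontrivial characters of a group of order $p$ as in your Step 3. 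As written, your argument has no mechanism for lowering the order of the character, so it only covers $n\le 1$.

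This is exactly where the paper's proof does extra work: after reducing to $\chi$ of order $p^n$ with $G$ cyclic (note the paper gets rid of the tame part simply by replacing $\chi$ with its $p$-primary part, which changes neither side --- your appeal to \eqref{eq:sc:pft:6} for the ``tame quotient'' is not quite the right statement, since \eqref{eq:sc:pft:6} concerns $f_*\Lambda$), it treats $n=1$ as you do, and for general $n$ it uses the decomposition $h_*(\mathcal F|_{U'})\simeq\bigoplus_\psi\mathcal F\otimes\mathcal F_\psi$, observes that $\chi$ and $\chi\cdot\psi$ have the same order and hence the same wild ramification, applies Saito--Yatagawa (for $\Sw^{\rm cc}$) and the explicit formula \eqref{eq:sc:pft:2} (for $\Sw^{\rm ks}$) to get $\Sw(\mathcal F\otimes\mathcal F_\psi)=\Sw(\mathcal F)$, and then combines Lemma \ref{lem:sc:pfcc} with the already-proved order-$p$ case for $\mathcal F|_{U'}$ to conclude $p^{n-1}\Sw_X^{\rm ks}(\mathcal F)=-p^{n-1}\Sw_X^{\rm cc}(\mathcal F)$. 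Supplying this twisting argument (or, alternatively, grouping the characters of $f_*\Lambda$ by their exact order and invoking Saito--Yatagawa for every order $p^m$, $m\le n$, over each intermediate covering) is what is missing from your Step 2; the rest of your proposal matches the paper.
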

The idea of the following proof is due to T. Saito.
\begin{proof}
 By Lemma \ref{lem:sc:pfcc} and resolution of singularities, 
 we may assume that $B=X- U$ is a simple normal crossing divisor and we can find a Cartesian diagram (\ref{eq:sc:pfcc:1}) 
 such that $\mathcal F$ is trivialized by $V$ and $D=Y- V$ is a simple normal crossing divisor on $Y$.
By Brauer's theorem \cite[Theorem 19]{Serre97}, 
we may assume that $\mathcal F=h_\ast \mathcal G$ where $h\colon U^\prime\rightarrow U$ 
is an intermediate covering of $V\rightarrow U$ such that the Galois group $G$ of $V\rightarrow U^\prime$ is a 
$p$-elementary group and $\mathcal G$ is a smooth sheaf of rank 1 on $U^\prime$. 
By Lemma \ref{lem:sc:pfcc}  and resolution of singularities again, we may assume $\mathcal F$ is of rank 1 such that the Galois group $G$ of $V\rightarrow U$ is a $p$-elementary abelian group. 
Let $\chi\in H^1(U,\mathbb Q/\mathbb Z)$ be the character of $G$ corresponding to $\mathcal F$. Replacing $\chi$ by its $p$-primary part,  it does not change both sides of (\ref{eq:sc:ss2}). 
Then $\chi\in H^1(U,\mathbb Z/p^n\mathbb Z)$ for some non-negative integer  $n$.
Hence we may assume that the character associated to $\mathcal F$ is of order $p^n$ and $G$ is a cyclic group of order $p^n$.

If $n=0$, then $\mathcal F= \Lambda$ is the trivial sheaf. Hence $\Sw_X^{\rm ks}(\mathcal F)=-\Sw_X^{\rm cc}(\mathcal F)=0$.

If $n=1$, $G$ is a cyclic group of order $p$. Then $f_\ast\Lambda=\bigoplus\limits_{\chi}\mathcal F_\chi$ where $\chi$ runs over (1-dimensional) 
irreducible representations of $G$ and $\mathcal F_\chi$ is the smooth sheaf on $U$ corresponding to $\chi$. We have
\begin{align}
\label{eq:sc:surf:4}\Sw_X^{\rm ks}(f_\ast\Lambda)&=\sum_{\chi}\Sw_X^{\rm ks}(\mathcal F_\chi)=\sum_{\chi\neq 1}\Sw_X^{\rm ks}(\mathcal F_\chi),\\
\label{eq:sc:surf:3}\Sw_X^{\rm cc}(f_\ast\Lambda)&=\sum_{\chi}\Sw_X^{\rm cc}(\mathcal F_\chi)=\sum_{\chi\neq 1}\Sw_X^{\rm cc}(\mathcal F_\chi).
\end{align}
For an irreducible representation $\chi$, let $\Lambda_\chi$ be the representation space of $\chi$. 
Then for any two non-trivial (1-dimensional) irreducible representations $\chi$ and $\chi^\prime$ of $G$, they have same order $|G|=p$. 
For any $\sigma\in G$, we have
\begin{align}\label{eq:sc:surf:2}
{\rm dim}~ \Lambda_\chi^{\sigma}={\rm dim}~ \Lambda_{\chi^\prime}^{\sigma}.
\end{align}
Hence by \cite[Definition 5.1]{Saito_Yatagawa}, $\chi$ and $\chi^\prime$ have same wild ramification. 
Thus by \cite[Theorem 0.1]{Saito_Yatagawa}, $\Sw_X^{cc}(\mathcal F_\chi)=\Sw_X^{cc}(\mathcal F_{\chi^\prime})$.
By (\ref{eq:sc:surf:2}) and (\ref{eq:sc:pft:2}), we also have $\Sw_X^{ks}(\mathcal F_\chi)=\Sw_X^{ks}(\mathcal F_{\chi^\prime})$.
Since $\mathcal F$ corresponds to a non-trivial irreducible  representation of $G$,
we get
\begin{align}\label{eq:sc:surf:1}
(p-1) \Sw_X^{\rm ks}(\mathcal F)&=\sum_{\chi\neq 1}\Sw_X^{\rm ks}(\mathcal F_\chi)=\Sw_X^{\rm ks}(f_\ast\Lambda)\\
\label{eq:sc:surf:1-1}&\overset{\rm (\ref{eq:sc:pft:6})}{=}
-\Sw_X^{\rm cc}(f_\ast\Lambda)=-\sum_{\chi\neq 1}\Sw_X^{\rm cc}(\mathcal F)=-(p-1) \Sw_X^{\rm cc}(\mathcal F).
\end{align}
We obtained $ \Sw_X^{\rm ks}(\mathcal F)=- \Sw_X^{\rm cc}(\mathcal F)$.

In general, let $H\subseteq G$ be the unique subgroup of order $p$. Let $\chi$ be the character of $G$ associated to $\mathcal F$ and $\chi_H$  the restriction of $\chi$ to $H$. 
Then $\chi$ is of order $p^n$ and $\chi_H$ is of order $p$. Let $V\xrightarrow{g} U^\prime\xrightarrow{h} U$ be the etale coverings such that the Galois group of $V\xrightarrow{g} U^\prime$ is $H$.
Let $\mathcal F_{\chi_H}$ be the smooth sheaf on $U^\prime$ associated to $\chi_H$. By definition, $\mathcal F_{\chi_H}=h^\ast\mathcal F$. We have
\begin{align}\label{eq:sc:surf:6}
h_\ast \mathcal F_{\chi_H}=h_\ast h^\ast\mathcal F\simeq \mathcal F\otimes h_\ast\Lambda\simeq \mathcal F\otimes\bigoplus_{\psi}\mathcal F_{\psi}\simeq\bigoplus_{\psi}\left(\mathcal F\otimes\mathcal F_{\psi}\right)
\end{align}
where $\psi$ runs over irreducible representations of the cyclic group $G/H$ and $\mathcal F_{\psi}$ is the smooth sheaf on $U$ associated to $\psi$. 
We also regard $\psi$ as a character of $G$. Now the character $\chi\cdot \psi$ associated to $\mathcal F\otimes\mathcal F_{\psi}$ is of order $p^n$ ($\chi$ and $\chi\cdot \psi$ have  same order). Then for any $\sigma\in G$,   we have
\begin{align}\label{eq:sc:surf:zj}
\dim \Lambda_{\chi}^\sigma=\dim \Lambda_{\chi\cdot \psi}^\sigma.
\end{align} 
Hence by \cite[Definition 5.1 and Theorem 0.1]{Saito_Yatagawa}, $\chi$ and $\chi\cdot \psi$ have same wild ramification and $\Sw_X^{cc}(\mathcal F)=\Sw_X^{cc}(\mathcal F\otimes\mathcal F_\psi)$. 
By (\ref{eq:sc:surf:zj}) and (\ref{eq:sc:pft:2}), we also have $\Sw_X^{ks}(\mathcal F)=\Sw_X^{ks}(\mathcal F\otimes\mathcal F_\psi)$.
Thus by 
(\ref{eq:sc:surf:6}) and Lemma \ref{lem:sc:pfcc}, we get
\begin{align}
\label{eq:sc:surf:7}\bar h_\ast \Sw^{ks}(\mathcal F_{\chi_H})+{\rm rank}\mathcal F_{\chi_H}\cdot\Sw_X^{ks}(h_\ast\Lambda)&=\Sw_X^{ks}(h_\ast \mathcal F_{\chi_H})\\
\nonumber&=\sum_{\psi}\Sw_X^{ks}(\mathcal F\otimes\mathcal F_\psi)=p^{n-1}\Sw_X^{ks}(\mathcal F),\\
\label{eq:sc:surf:8}\bar h_\ast \Sw^{cc}(\mathcal F_{\chi_H})+{\rm rank}\mathcal F_{\chi_H}\cdot\Sw_X^{cc}(h_\ast\Lambda)&=\Sw_X^{cc}(h_\ast \mathcal F_{\chi_H})\\
\nonumber&=\sum_{\psi}\Sw_X^{cc}(\mathcal F\otimes\mathcal F_\psi)=p^{n-1}\Sw_X^{cc}(\mathcal F).
\end{align}
By the case $n=1$, $\Sw^{ks}(\mathcal F_{\chi_H})=-\Sw^{cc}(\mathcal F_{\chi_H})$. By Lemma  \ref{lem:sc:pfcc}, $\Sw_X^{ks}(h_\ast\Lambda)=-\Sw_X^{cc}(h_\ast\Lambda)$. Hence from (\ref{eq:sc:surf:7}) and (\ref{eq:sc:surf:8}), we  get that $\Sw^{ks}(\mathcal F)=-\Sw^{cc}(\mathcal F)$ in $CH_0(X)$. This finished the proof.
\end{proof}
\subsection{}
If $k$ is a finite field, then Conjecture \ref{conj:sc:pf} is true for proper smooth surface by Corollary \ref{corYZ:2}. 
Thus we have the following result.
\begin{corollary}\label{cor:2surf}
Let $X$ be a proper smooth and connected surface  over a finite field $k$. 
Let $U$ be an open dense subscheme of $X$ and $\mathcal F$ a locally constant and constructible sheaf of $\Lambda$-modules on $U$. 
Then
we have an equality in $CH_0(X)$:
\begin{equation}
 \Sw_X^{\rm ks}(\mathcal F)=- \Sw_X^{\rm cc}(\mathcal F).
\end{equation}
\end{corollary}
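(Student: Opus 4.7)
The plan is to deduce Corollary \ref{cor:2surf} immediately from Theorem \ref{thm:sc:surf}, whose hypotheses both happen to be satisfied in the surface-over-finite-field setting. Recall that Theorem \ref{thm:sc:surf} produces the equality $\Sw_X^{\rm ks}(\mathcal F)=-\Sw_X^{\rm cc}(\mathcal F)$ in $CH_0(X)$ under two assumptions: resolution of singularities for the auxiliary covers and compactifications that appear in its proof, and Conjecture \ref{conj:sc:pf} (the push-forward compatibility $cc_X(Rf_\ast \mathcal G)=f_\ast cc_Y(\mathcal G)$ for generically finite surjective maps $f\colon Y\to X$ from smooth proper $Y$).

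First I would verify resolution of singularities. In the proof of Theorem \ref{thm:sc:surf} the devissage via Brauer induction, Lemma \ref{lem:sc:pfcc}, and the reduction to a simple normal crossing situation uses resolution of singularities only for surfaces (and their finite covers, which are again surfaces). Resolution for excellent two-dimensional schemes over a perfect field is classical (Zariski, Abhyankar, Lipman), so this assumption is unconditional in our setting.

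Next I would verify Conjecture \ref{conj:sc:pf} for the given surface $X$. For this we need to know that $X$ is actually projective, not merely proper: by the Zariski--Chow theorem, every proper smooth surface over a field is projective. Thus any smooth proper $Y$ mapping generically finitely and surjectively onto $X$ is again a surface, hence projective. Therefore Corollary \ref{corYZ:2} applies to the morphism $f\colon Y\to X$ and yields $f_\ast(cc_Y\mathcal G)=cc_X(Rf_\ast\mathcal G)$ for every $\mathcal G\in D_c^b(Y,\Lambda)$, which is exactly the statement of Conjecture \ref{conj:sc:pf} for $X$.

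With both hypotheses of Theorem \ref{thm:sc:surf} confirmed, the conclusion $\Sw_X^{\rm ks}(\mathcal F)=-\Sw_X^{\rm cc}(\mathcal F)$ in $CH_0(X)$ follows at once. There is no real obstacle in this argument beyond checking that ``proper smooth surface over a field'' implies ``projective,'' which is what unlocks the use of Corollary \ref{corYZ:2}; everything else is a direct citation of the already-proved Theorem \ref{thm:sc:surf}.
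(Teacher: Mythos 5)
Your argument is correct and is precisely the one the paper gives. The paper's entire derivation of Corollary~\ref{cor:2surf} from Theorem~\ref{thm:sc:surf} rests on the same two checks you make: resolution of singularities for (excellent) surfaces is known unconditionally, and Conjecture~\ref{conj:sc:pf} holds for $X$ because any smooth proper surface over a field is projective (so Corollary~\ref{corYZ:2} applies to the generically finite surjections $Y\to X$). The only quibble is the attribution --- the projectivity of smooth proper surfaces is usually credited to Zariski (not a ``Zariski--Chow theorem''), and Chow's lemma by itself would not suffice since it only gives a projective scheme dominating $X$ rather than $X$ being projective.
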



\begin{thebibliography}{0}

\bibitem{Pat17a} T. Abe and D. Patel,  \emph{On a localization formula of epsilon factors via microlocal geometry}, to appear in Annals of $K$-theory.

\bibitem{SGA4T3}
M. Artin, A. Grothendieck, et J.-L. Verdier, \emph{Th\'eorie des topos et cohomologie \'etale des sch\'emas}, S\'eminaire de G\'eom\'etrie Alg\'ebrique du Bois-Marie 1963-64 (SGA 4), Lecture Notes
in Mathematics, vol. 269, 270, 305, Springer-Verlag, Berlin-Heidelberg-New York, 1972-1973.
\bibitem{SGA6}
P. Berthelot, A. Grothendieck, L. Illusie. et al.,\emph{Th\'eorie
des intersections et th\'eor\`eme de Riemann-Roch}, S\'eminaire de G\'eom\'etrie Alg\'ebrique du Bois-Marie 1966-67 (SGA 6), Lecture Notes in Mathematics 225, Springer-Verlag, Berlin-Heidelberg-New York, 1971.

\bibitem{Beilinson}
A. Beilinson, \emph{Constructible sheaves are holonomic}, Selecta Mathematica,
 22, Issue 4 (2016): 1797-1819.
\bibitem{beilinson07}A. Beilinson, \emph{Topological E-factors}, Pure Appl. Math. Q., Vol. 3, No. 1 (2007): 357-39.
\bibitem{BBD}
A. Beilinson, I. N. Bernstein et P. Deligne, \emph{Faisceaux pervers}, dans Analyse et topologie sur les
espaces singuliers (I), Conf\'erence de Luminy, juillet 1981, Ast\'erisque 100 (1982).



\bibitem{DH}
P. Deligne, G. Henniart, \emph{Sur la variation, par torsion, des constantes locales
d'\'equations fonctionnelles de fonctions L}, Invent. Math. 64 (1981): 89-118

\bibitem{Weil2}
P. Deligne, \emph{La conjecture de Weil, II}, Inst. Hautes \'Etudes Sci. Publ. Math. 52 (1980): 137-252.

\bibitem{De2}
P. Deligne, \emph{Les constantes des \'equations fonctionnelles des fonctions $L$},
S\'eminaire \`a l' I.H.E.S., 1980, notes de L. Illusie.

\bibitem{SGA4h}
P. Deligne, \emph{Cohomologie \'etale}, S\'eminaire de G\'eom\'etrie Alg\'ebrique du Bois-Marie SGA 4{\tiny$1/2$}, avec la collaboration de J. F. Boutot, A. Grothendieck, L. Illusie et J.-L. Verdier. Lecture Notes in Mathematics 569, Springer-Verlag, Berlin-Heidelberg-New York, 1977.

\bibitem{SGA7II}P. Deligne et N. Katz, \emph{Groupes de monodromie en g\'eom\'etrie alg\'ebrique}, S\'eminaire de G\'eom\'etrie Alg\'ebrique du Bois-Marie 1967-1969 (SGA 7 II), Lecture Notes in Mathematics 340, Springer-Verlag, Berlin-Heidelberg-New York, 1973.


\bibitem{Deligne}
P. Deligne, \emph{Les constantes des \'equations fonctionnelles des fonctions $L$}, in Modular Functions of One Variable II, Lecture Notes in Mathematics 349, Springer-Verlag, Berlin-Heidelberg-New York, 1972.

\bibitem{Eke90}
T. Ekedahl, \emph{On the adic formalism}, in The Grothendieck Festschrift, Vol. II, volume 87 of Progr. Math., Birkh\"auser Boston, Boston, MA, (1990): 197-218.

\bibitem{Leifu}L. Fu, \emph{Etale cohomology theory}, revised edition. Nankai tracts in Math. 14 (2015).


\bibitem{Fulton}W. Fulton, \emph{Intersection theory}, 2nd edition, Springer New York (1998).
\bibitem{Ginsburg}
V. Ginsburg, \emph{Characteristic varieties and vanishing cycles}, Inventiones mathematicae, 84 (1986), 327-402.
\bibitem{SGA5}
A. Grothendieck {\it et al.}, \emph{Cohomologie $\ell$-adique et fonctions L.}
S\'eminaire de G\'eom\'etrie Alg\'ebrique du Bois-Marie 1965--1966 (SGA 5).
dirig\'e par A. Grothendieck avec la collaboration de I. Bucur, C. Houzel, L. Illusie, J.-P. Jouanolou et J-P. Serre. Lecture Notes in Mathematics  589, Springer-Verlag, Berlin-Heidelberg-New York, (1977).
\bibitem{Grothendieck}
A. Grothendieck, \emph{R\'ecoltes et Semailles, R\'eflexions et t\'emoignages sur un pass\'e de math\'ematicien}, \href{http://lipn.univ-paris13.fr/\~{}duchamp/Books\&more/Grothendieck/RS/pdf/RetS.pdf}{http://lipn.univ-paris13.fr/\~{}duchamp/Books\&more/Grothendieck/RS/pdf/RetS.pdf}
\bibitem{Hartshorne}
R. Hartshorne, \emph{Algebraic geometry}, Springer-Verlag, Berlin-Heidelberg-New York, 1977.

\bibitem{ILO14}
L. Illusie, Y. Lazslo, and F. Orgogozo, \emph{Travaux de Gabber sur l'uniformisation locale et la cohomologie \'etale 
des sch\'emas quasi-excellents,} Ast\'erisque, vol. 361-362, Soc. Math. France, 2014, S\'eminaire \`a l'\'Ecole Polytechnique 2006-2008.

\bibitem{Illusie}
L. Illusie, \textit{Th\'eorie de Brauer et caract\'eristique d'Euler-Poincar\'e, d'apr\`es Deligne}, Caract\'eristique d'Euler-Poincar\'e, Expos\'e VIII, Ast\'erisque 82-83 (1981): 161-172.

%





\bibitem{Kato_Saito:1983}
K. Kato and S. Saito, \textit{Unramified class field theory of arithmetical surfaces}, Annals of Mathematics, 118 (1983): 241-275.
\bibitem{Kato_Saito}
K. Kato and T. Saito, \textit{Ramification theory for varieties over a perfect field}, Annals of Mathematics, 168 (2008): 33-96.


\bibitem{Katz88}
N. M. Katz, \emph{Gauss sums, Kloosterman sums, and monodromy groups}, volume 116 of Annals of Mathematics Studies, Princeton University Press,
Princeton, NJ, 1988.

\bibitem{Katz87}
N. M. Katz, \emph{Travaux de Laumon}, S\'eminaire N. Bourbaki, 1987-1988, exp. n 691, p. 105-132.



\bibitem{KW}
R. Kiehl and R. Weissauer, \emph{Weil conjectures, perverse sheaves and l-adic Fourier
transform}, Ergebnisse der Math. 42, Springer-Verlag, Berlin, 2001.
\bibitem{Laumon}
G. Laumon, \textit{Transformation de Fourier, constantes d'\'equations fonctionnelles et conjecture de Weil}, Publications Math\'ematiques de l'IH\'ES 65 (1987): 131-210. 

\bibitem{Nagata}
M. Nagata, \emph{A generalization of the imbedding problem of an abstract variety in a complete variety}, J. Math. Kyoto Univ. 3 (1963): 89-102.
\bibitem{Pat12} D. Patel, \emph{De Rham $\varepsilon$-factors}, Inventiones Mathematicae,  190 (2012): 299-355.
\bibitem{Pat17b} D. Patel, \emph{K-theory of algebraic microdifferential operators}, preprint. 

\bibitem{Raskind}
W. Raskind, \emph{Abelian class field theory of arithmetic schemes}, K-Theory and Algebraic Geometry: Connections with Quadratic Forms and Division Algebras (Santa Barbara, CA, 1992), Proc. Sympos. Pure Math., vol. 58, Amer. Math. Soc., Providence, RI, (1995): 85-187. 
\bibitem{Saito:1984}
S. Saito, \textit{Functional equations of $L$-functions of varieties over finite fields}, Journal of the Faculty of Science, the University of Tokyo. Sect. IA, Mathematics, Vol.31, No.2 (1984): 287-296.



\bibitem{Saito17b}
T.~Saito, \textit{Characteristic cycles and the conductor of direct image}, \href{https://arxiv.org/abs/1704.04832}{arXiv:1704.04832}. 

\bibitem{Saito17}
T. Saito, \textit{On the proper push-forward of the characteristic cycle of a constructible sheaf}, accepted for publication at Proceedings of Symposia in Pure Mathematics, \href{https://arxiv.org/abs/1607.03156}{arXiv:1607.03156}.

\bibitem{Saito}
T. Saito, \textit{The characteristic cycle and the singular support of a constructible sheaf}, Inventiones mathematicae, 207 (2017): 597-695.


\bibitem{Saito_Yatagawa}
T. Saito and Y. Yatagawa, \textit{Wild ramification determines the characteristic cycle}, Annales Scientifiques de l'ENS 50, fascicule 4 (2017): 1065-1079.

\bibitem{Sai93ep}
T. Saito, \emph{ $\varepsilon$-factor of a tamely ramified sheaf on a variety}, Inventiones mathematicae, 113 (1993): 389-417.




\bibitem{Serre97}
J. P.  Serre, \emph{Linear representations of finite groups}, Graduate Texts in Mathematics 42, Springer-Verlag, New York-Berlin-Heidelberg, 1997.

\bibitem{Serre68}
J. P. Serre, \emph{Groupe de Grothendieck des sch\'emas en groupes r\'eductifs d\'eploy\'es},  Publications Math\'ematiques de l'IH\'ES 34 (1968): 37-52.


\bibitem{Tate2}
J. Tate, \emph{Number-theoretic background}, in Automorphic forms, representations and $L$-functions, PSPM, vol 33, part II. A. M. S., Providence (1979): 3-26.

\bibitem{Tate}
J. Tate, \emph{Fourier analysis in number fields and Hecke's zeta functions}, in \emph{Algebraic Number Theory}, London, Academic Press (1967): 305-347. 


\bibitem{NYZ}
N. Umezaki, E. Yang and Y. Zhao, \emph{A blow up formula for Gysin pull-back}, \url{https://yangenlin.files.wordpress.com/2018/03/buf.pdf}{}.


\bibitem{vi1}
I. Vidal,  \textit{Formule du conducteur pour un caract\`ere l-adique}, Compos. Math., 145  (2009): 687-717.
\bibitem{vi2}
I. Vidal, \textit{Formule de torsion pour le facteur epsilon d'un caract\`ere sur une surface}, Manuscripta math., 130 (2009): 21-44. 

\bibitem{weibel}
C. A. Weibel, \emph{An introduction to homological algebra}, Cambridge studies in advanced mathematics 38, 1994.

\end{thebibliography}
\end{document}